\documentclass[a4paper,11pt]{article}
\usepackage[T1]{fontenc}
\usepackage[latin1]{inputenc}
\usepackage{color}
\usepackage{array}
\usepackage{amssymb}
\usepackage{amsmath}
\usepackage{hyperref}
\usepackage[all]{xy}
\usepackage{euscript}
\usepackage{mathrsfs}
\usepackage{graphicx}
\usepackage{xspace}
\usepackage{amsfonts}
\usepackage{multirow}
\usepackage{nicefrac}
\usepackage{textcomp}
\usepackage{url}
\usepackage{stmaryrd}
\usepackage{amsthm}
\usepackage{geometry}
\usepackage{mathtools}
\usepackage{epsfig}
\geometry{ hmargin=2.5cm, vmargin=2cm }
\usepackage{txfonts}

\definecolor{purple}{rgb}{0.8,0.12,0.8}
\definecolor{orange}{rgb}{1.0,0.7,0.0}
\definecolor{pink}{rgb}{1,0.5,0.8}
\definecolor{blackg}{rgb}{0.1,0.25,0.1}
\definecolor{ForestGreen}{cmyk}{0.91,0,0.88,0.42}
\definecolor{Turquoise}{cmyk}{0.85,0,0.20,0}



\newcommand{\cB}{\mathcal{B}}
\newcommand{\cC}{\mathcal{C}}

\newcommand{\cE}{\mathcal{E}}
\newcommand{\cF}{\mathcal{F}}

\newcommand{\cH}{\mathcal{H}}

\newcommand{\cK}{\mathcal{K}}
\newcommand{\cL}{\mathcal{L}}

\newcommand{\cP}{\mathcal{P}}

\newcommand{\cR}{\mathcal{R}}

\newcommand{\cU}{\mathcal{U}}



\newcommand{\bH}{\mathbf{H}}

\newcommand{\ba}{\mathbf{a}}

\newcommand{\bc}{\mathbf{c}}

\newcommand{\bm}{\mathbf{m}}

\newcommand{\br}{\mathbf{r}}
\newcommand{\bs}{\mathbf{s}}

\newcommand{\bv}{\mathbf{v}}


\newcommand{\fB}{\mathfrak{B}}

\newcommand{\fS}{\mathfrak{S}}

\newcommand{\fb}{\mathfrak{b}}

\newcommand{\fd}{\mathfrak{d}}

\newcommand{\fr}{\mathfrak{r}}


\newcommand{\sB}{\mathscr{B}}

\newcommand{\sD}{\mathscr{D}}

\newcommand{\sG}{\mathscr{G}}
\newcommand{\sH}{\mathscr{H}}

\newcommand{\sP}{\mathscr{P}}

\newcommand{\sS}{\mathscr{S}}

\newcommand{\Z}{\mathbb{Z}}

\newcommand{\Q}{\mathbb{Q}}
\newcommand{\C}{\mathbb{C}}

\newcommand{\al}{\alpha}
\newcommand{\be}{\beta}
\newcommand{\si}{\sigma}
\newcommand{\la}{\lambda}
\newcommand{\ga}{\gamma}
\newcommand{\eps}{\varepsilon}

\newcommand{\ze}{\zeta}

\newcommand{\de}{\delta}


\newcommand{\La}{\Lambda}

\newcommand{\De}{\Delta}

\newcommand{\bla}{\boldsymbol{\la}}

\newcommand{\bmu}{\boldsymbol{\mu}}




\newcommand{\ts}{\tilde{s}}


\newcommand{\tbs}{\tilde{\bs}}


\newcommand{\hbs}{\hat{\bs}}

\newcommand{\ra}{\rightarrow}
\newcommand{\lra}{\longrightarrow}
\newcommand{\Ra}{\Rightarrow}

\newcommand{\eq}{\Leftrightarrow}
\newcommand{\eqdef}{\xLeftrightarrow{\text{def}}}


\newcommand{\mand}{\quad\text{and}\quad}
\newcommand{\st}{\;\,\text{such that}\;\,}

\newcommand{\Ue}{\mathcal{U}_q (\widehat{\mathfrak{sl}}_e)}
\newcommand{\bemptyset}{\boldsymbol{\emptyset}}
\newcommand{\ds}{\displaystyle}
\newcommand{\Hkn}{\bH_{k,n}}
\newcommand{\ug}{\Phi_{\bs}}
\newcommand{\llb}{\llbracket}
\newcommand{\rrb}{\rrbracket}

\newcommand{\Id}{\text{Id}}
\newcommand{\Irr}{\text{Irr}}
\newcommand{\preck}{\prec_{\cK}}

\newcommand{\llu}{\leq_\cU}

\newcommand{\Frac}{\text{Frac}}
\newcommand{\charac}{\text{char}}


\theoremstyle{plain}
\newtheorem{thm}{Theorem}[section]
\newtheorem{lem}[thm]{Lemma}

\newtheorem{cor}[thm]{Corollary}
\newtheorem{prop}[thm]{Proposition}

\theoremstyle{definition}
\newtheorem{defi}[thm]{Definition}

\theoremstyle{remark}
\newtheorem{exa}[thm]{Example}
\newtheorem{rem}[thm]{Remark}

\makeindex

\title{Generalised canonical basic sets for Ariki-Koike algebras}
\author{Thomas Gerber 
\footnote{\textsc{Laboratoire de Math\'ematiques et Physique Th\'eorique} (UMR 7350, CNRS - Universit\'e de Tours) \hspace{5cm}
Parc de Grandmont, 37200, Tours. 
E-mail address : \url{thomas.gerber@lmpt.univ-tours.fr} }}

\begin{document}
 
\maketitle

\abstract{
Let $\sH$ be a non semi-simple Ariki-Koike algebra.
According to \cite{Jacon2004} and \cite{GeckJacon2011}, 
there is a generalisation of Lusztig's $\ba$-function which induces a natural combinatorial order 
\linebreak (parametrised by a tuple $\bm$) on Specht modules.
In some cases, Geck and Jacon have proved that this order makes the decomposition matrix of $\sH$ unitriangular.
The algebra $\sH$ is then said to admit a "canonical basic set".
We fully classify which values of $\bm$ yield a canonical basic set for $\sH$ and which do not. 
When this is the case, we describe these sets in terms of "twisted Uglov" or "twisted Kleshchev" multipartitions.
}

\section{Introduction}

Over a field of characteristic $0$, the representation theory of the symmetric group $\fS_n$ is well-known.
In particular, thanks to Maschke's semi-simplicity criterion, it is sufficient to understand its irreducible representations.
In fact, they are parametrised by partitions of $n$, via an explicit bijection.
It is then possible to study the representations of $\fS_n$ in a combinatorial manner (using the notion of Young tableaux),
and deduce for instance the classic "hook-length formula" to compute the dimension of any irreducible representation.

In the so-called modular case, that is, when the ground field is of prime characteristic $e$, one loses many convenient properties,
notably the semi-simplicity property.
One can however collect some information. For instance, the irreducible representations 
are known to be parametrised by $e$-regular partitions of $n$ (that is, the partitions of $n$ with at most $e-1$ equal parts).
Also, the study of the "decomposition matrix", which measures the defect of semi-simplicity, is of great interest in modular representation theory.
This matrix is known to have a unitriangular shape with respect to the dominance order on the set of $e$-regular partitions, 
which parametrise its columns.
It is natural to try to extend this property to the more general groups $G(l,1,n)=(\Z/l\Z) \wr \fS_n$,
and to their quantizations, which are known as Ariki-Koike algebras.

Inspired by this typical example, Geck and Rouquier introduced in \cite{Geck1998} and \cite{GeckRouquier2001} 
the notion of canonical basic set for a non semi-simple Hecke algebra $\sH$.
This approach formalises the fact that the decomposition matrix $D$ of $\sH$ is unitriangular, with respect to a certain ordering of
its columns. Besides, it gives a bijection between $\Irr(\sH)$ and a subset $\sB$ of $\Irr(\cH)$, where $\cH$
is the semi-simple generic Hecke algebra which specialises to $\sH$.
The columns of $D$ are then parametrised by $\sB$.

In this paper, we focus on Ariki-Koike algebras, that is, Hecke algebras $\bH$ of the complex reflection groups $G(l,1,n)$.
When $\bH$ is non semi-simple, it is regarded as a specialisation,
parametrised by a pair $(e,\br)$ (where $e$ is an integer and $\bs$ is an $l$-tuple of integers),
of a generic Ariki-Koike algebra H.
The irreducible representations of H are known to be parametrised by $l$-partitions of $n$.
Using Brou\'e and Malle's "cyclotomic Hecke algebras", see \cite{BroueMalle1993},
Jacon defined in \cite{Jacon2004} $\ba$-invariants for Ariki-Koike algebras, depending on a parameter $\bm\in\Q^l$ (which itself depends on $\bH$),
which induce an order on the set of $l$-partitions of $n$.
These $\ba$-invariants are seen as a generalisation of Lusztig's $\ba$-function \cite{Lusztig1984}.
In \cite{GeckJacon2011}, Geck and Jacon showed compatibility between this order induced by the $\ba$-invariants, 
which has several geometric interpretations, and a combinatorial order $\ll_\bm$ defined using Lusztig's symbols.
It turns out that the order $\ll_\bm$ naturally arises in the study of $G(l,1,n)$, 
for instance in Kazhdan-Lusztig theory in type $B_n$ (that is, when $l=2$), see \cite{GeckIancu2012},
or via the representations of Cherednik algebras, as studied in \cite{ChlouverakiGordonGriffeth2011} or \cite{Liboz2012}.

Ariki's proof in \cite{Ariki1996} of the LLT conjecture \cite{LLT1995} enables us to compute the
decomposition numbers of $\bH$, when the ground field has characteristic zero, via the canonical basis of the Fock space $\cF_\br$ (Theorem \ref{arikistheorem}).
With this approach, Geck and Jacon showed (see \cite[Theorem 6.7.2]{GeckJacon2011}) that it is always possible to find a canonical
basic set $\sB$ for $\bH$ for an appropriate choice of the parameter $\bm$.
In fact, they showed that $\sB$ is the set $\Phi_\br(n)$ of Uglov $l$-partitions of rank $n$ associated to $\br$.

We study here the canonical basic set for $\bH$ with respect to $\ll_\bm$ in full generality: given an element $\bm\in\Q^l$,
we can define generalised $\ba$-invariants for $\bH$ depending on $\bm$.
In this setting, is there a canonical basic set for $\bH$ with respect to $\ll_\bm$?
After defining a certain finite set of hyperplanes $\sP^*\in\Q^l$,
we establish the following classification (see Theorem \ref{theorem}):
\begin{itemize}
\item If $\bm\notin\sP^*$, then there exists a canonical basic set, which we explicitely determine.
\item If $\bm\in\sP^*$, then there is no canonical basic set for $\bH$.
\end{itemize}
In the first case, the canonical basic set we describe can be regarded as a generalisation of the set of Uglov multipartitions determined by Geck and Jacon.
Hence we get other basic sets than the ones in \cite{GeckJacon2011}.

\medskip

The paper is structured as follows. 
In Section \ref{preliminaries} we recall some background on the representation theory of Ariki-Koike algebras
in the non semi-simple case. We introduce the order $\ll_\bm$ we use throughout this article and the notion of canonical basic set
in the sense of \cite{GeckRouquier2001} and \cite{Geck2007}.
We also formulate the precise question we are interested in.
Section \ref{existenceforappropriateparameters} summarizes the results of Geck Jacon in \cite[Chapters 5 and 6]{GeckJacon2011}
which are relevant for our purpose. We introduce the Fock space, which is a $\Ue$-module, and its highest weight submodule $V(\br)$
which appears in Ariki's theorem.
In Section \ref{mregular} we show, using Theorem \ref{bs}, that any so-called regular element $\bm$ yields
a canonical basic set, which we describe in terms of "twisted Uglov multipartitions".
Section \ref{masymptotic} is devoted to the asymptotic case, which roughly speaking corresponds to the case where the difference between
two arbitrary components of $\bm$ is large. After explaining the particularity of this setting,
we establish the existence of a canonical basic set with respect to $\ll_\bm$, namely the set of some "twisted Kleshchev multipartitions".
In the final Section \ref{msingular}, we prove that when $\bm$ is singular, that is, when $\bm\in\sP^*$, 
there is no canonical basic set with respect to $\ll_\bm$. We sum up these results in Theorem \ref{theorem}.

\medskip

\textbf{MSC:} 05E10, 20C08, 20C20, 16T30.

\section{Preliminaries}\label{preliminaries}

\subsection{General notations} \label{generalnotations}

We start with some notations about partitions and multipartitions.

\medskip

Let $n\in\Z_{\geq0}$ and $l\in\Z_{>0}$.
A \textit{partition} of $n$ is a decreasing sequence of non-negative integers $\la=(\la_1,\la_2,\dots)$ such that $\sum_i \la_i=n$. 
We write $|\la|=n$, the \textit{rank} of $n$.
The elements $\la_i$ are called the \textit{parts} of $\la$. We consider that a partition $\la$ has an infinite number of parts $\la_i=0$.
We denote by $\Pi(n)$ the set of partitions of $n$, and we write $\la\vdash n$ if $\la\in\Pi(n)$.
An \textit{$l$-partition} of $n$ (also referred to as a \textit{multipartition}) 
is a sequence of partitions $\bla=(\la^1,\la^2,\dots,\la^l)$ such that $|\la^1|+\dots+|\la^l| =n$.
We define $\Pi_l(n)$ the set of $l$-partitions of $n$, and we write $\la\vdash_l n$ if $\la\in\Pi_l(n)$.
The integer $|\bla|=n$ is called the rank of $\bla$.

Let $\bla\vdash_l n$. The \textit{Young diagram} of $\bla$ is the set $$[\bla]:=\{(a,b,c)\,;\, a\geq 1, c\in\llb 1, l\rrb, \text{and} \; 1\leq b \leq \la_a^c\}.$$
The elements of $[\bla]$ are called the nodes of $\bla$. 
For the sake of simplicity, we sometimes identify a multipartition with its Young diagram.
A node $\ga$ of $[\bla]$ is called a \textit{removable node} if the element $\bmu$ such that $[\bmu]=[\bla]\backslash\{\ga\}$ is still a multipartition (of rank $n-1$).
In this case, $\ga$ is also called an \textit{addable node} of $[\bmu]$.

A \textit{multicharge} is an element $\br=(r_1,\dots,r_l)\in\Z^l$. 
Given $e\in\Z_{>1}$, a multicharge $\br$ and a multipartition $\bla\vdash_l n$, we can associate to each node $\ga=(a,b,c)$ of $[\bla]$
its \textit{residue modulo $e$} $\fr_e(\ga):=r_c+b-a \mod e$.
For $i\in\llb 0, e-1 \rrb$, we call $\ga$ an \textit{$i$-node} if $\fr_e(\ga)=i$.

\medskip

We recall the classic \textit{dominance} order on $\Pi(n)$.
Let $\la=(\la_1,\la_2, \dots )$ and $\mu=(\mu_1,\mu_2,\dots)$ be two partitions of $n$. 
We say that that $\la$ \textit{dominates} $\mu$, and we write $\la\unrhd\mu$, if $\sum_{1\leq i \leq d} \la_i \geq \sum_{1\leq i\leq d}\mu_i$
for all $d\geq 1$.
More generally, we introduce a dominance order on the the set of sequences of rational numbers in the same manner.
Precisely, if $\al=(\al_1,\al_2,\dots)$ and $\be=(\be_1,\be_2,\dots)$ are such that $\sum_i \al_i =\sum_i \be_i$, we define $\al\unrhd\be$ by 
$\sum_{1\leq i \leq d} \al_i \geq \sum_{1\leq i\leq d}\be_i$ for all $d\geq 1$.
We write $\al\rhd\be$ if $\al\unrhd\be$ and $\al\neq\be$.

These are partial orders. In the following sections, we shall also consider other orders on the set $\Pi_l(n)$ of multipartitions.

\medskip
\medskip

We also need to introduce the \textit{extended affine symmetric group}.

\medskip

Let $l\in\Z_{>0}$. Denote by $\fS_l$ the symmetric group on $\llb 1, l \rrb$, and by $\si_i$, $i\in \llb 1, l-1 \rrb$ its generators
in its Coxeter presentation.
Let $\{y_1,\dots, y_l\}$ be the standard basis of $\Z^l$.
The extended affine symmetric group $\widehat{\fS}_l$ is the group with the following presentation:
\begin{itemize}
 \item Generators: $\si_i$, $i\in\llb 1, l-1\rrb$ and $y_i$, $i\in\llb 1, l\rrb$.
 \item Relations : \begin{itemize}
                       \item $\si_i^2=1$ for all $i\in\llb 1, l-1\rrb$,
                       \item $\si_i\si_{i+1}\si=\si_{i+1}\si_i\si_{i+1}$  for all $i\in\llb 1, l-2\rrb$,
                       \item $\si_i\si_j=\si_j\si_i$ whenever $i-j\neq 1 \mod l$,
                       \item $y_iy_j=y_jy_i$ for all $i,j\in\llb 1, l\rrb$,
                       \item $\si_i y_j=y_j\si_i$ for all $i\in\llb 1, l-1\rrb$ and $j\in\llb 1, l\rrb$ such that $j\neq i, i+1 \mod l$,
                       \item $\si_i y_i\si_i=y_{i+1}$ for all $i\in\llb 1, l-1\rrb$.
                    \end{itemize}

\end{itemize}

Note that $\widehat{\fS}_l$ is not a Coxeter group. Also, this group can be regarded as the semi-direct product $\Z^l\rtimes\fS_l$.

Given $e\in\Z_{>1}$, there is an action of $\widehat{\fS}_l$ on $\Z^l$, via the formulas:
\begin{itemize}
 \item $\si_i \br = (r_1, \dots, r_{i-1},r_{i+1},r_i, \dots, r_l)$ for all $i\in\llb 1, l-1\rrb$, and
 \item $y_i \br=(r_1,\dots,r_{i-1},r_i+e,r_{i+1},\dots, r_l)$ for all $i\in\llb 1, l\rrb$,
\end{itemize}
where $\br=(r_1,\dots,r_l)\in\Z^l$.

The set $\{\br\in\Z^l\,|\, 1\leq r_1\leq\dots\leq r_l\leq e\}$ is a fundamental domain for this action.

\subsection{Ariki-Koike algebras and decomposition maps}\label{akalgebras}

Let $l\in\Z_{>0}$ and $n\in\Z_{>1}$. Let $R$ be a subring of $\C$,
$u,V_1,\dots, V_l$ be independent indeterminates, and set $A=R[u^{\pm 1},V_1,\dots,V_l]$.

\begin{defi}
 The generic Ariki-Koike algebra $\bH_n=\bH_{A,n}(u,V_1,\dots,V_l)$ is the unital associative $A$-algebra with generators $T_i$,  $i=0,\dots,n-1$,
and relations \begin{itemize}
               \item $(T_i-u)(T_i+1)=0$ for all $i\in\llb 1, n-1 \rrb$.
               \item $(T_0-V_1)\dots(T_0-V_l)=0$
               \item $T_0T_1T_0T_1=T_1T_0T_1T_0$
               \item $T_i T_{i+1} T_i = T_{i+1} T_i T_{i+1}$ for all $i\in\llb 1, n-2 \rrb$
               \item $T_iT_j=T_jT_i$ whenever $|i-j|>1$
              \end{itemize}
Let $K$ be the field of fractions of $A$. We set $\bH_{K,n}=K\otimes_A\bH_n$.
We denote by $\Irr(\bH_{K,n})$ the set of irreducible $\bH_{K,n}$-modules.
\end{defi}

\begin{thm}[Ariki, \cite{ArikiKoike1994}]
The algebra $\bH_{K,n}$ is split semi-simple.
\end{thm}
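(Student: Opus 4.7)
The plan is to use Tits' deformation theorem together with an explicit specialisation to the complex reflection group algebra $\C[G(l,1,n)]$.

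First, I would invoke the Ariki--Koike basis theorem which asserts that $\bH_n$ is a free $A$-module of rank $l^n n! = |G(l,1,n)|$. This is proved by exhibiting an explicit basis indexed by words coming from a length function on $G(l,1,n)$, using the braid and quadratic relations to rewrite any product of generators in normal form; the lower bound on the rank then comes from exhibiting a faithful representation (for example the Ariki--Koike action on tensor space). After base change to $K$, this yields $\dim_K \bH_{K,n} = l^n n!$.

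Next, consider the ring homomorphism $\theta \colon A \to \C$ sending $u \mapsto 1$ and $V_i \mapsto \zeta^{i-1}$ where $\zeta = e^{2i\pi/l}$ is a primitive $l$-th root of unity. Localising at $\ker\theta$ and looking at the resulting specialisation of $\bH_n$, a direct check on the defining relations shows that it is isomorphic to the group algebra $\C[G(l,1,n)]$ of the complex reflection group $G(l,1,n) = (\Z/l\Z) \wr \fS_n$. By Maschke's theorem, this specialised algebra is split semi-simple, with simple modules indexed by $l$-partitions of $n$ via Clifford theory applied to the wreath product structure.

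Now I would apply Tits' deformation theorem (see e.g.\ Geck--Pfeiffer, \textit{Characters of finite Coxeter groups and Iwahori--Hecke algebras}, Chapter 7): given a finitely generated free algebra $\bH_n$ over the integral domain $A$ such that some specialisation $\bar{\bH}_n$ is split semi-simple, the generic algebra $\bH_{K,n}$ is also split semi-simple, and there is a canonical bijection between $\Irr(\bH_{K,n})$ and $\Irr(\bar{\bH}_n)$ preserving dimensions. In our setting this produces, for each $\bla \vdash_l n$, a Specht-type module $S^{\bla}$ over $K$ of the required dimension, and the sum of squares of the dimensions equals $l^n n! = \dim_K \bH_{K,n}$, confirming split semi-simplicity.

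The main technical point is verifying the hypotheses of Tits' deformation theorem — in particular, that the $A$-form $\bH_n$ admits a specialisation to a split semi-simple $\C$-algebra of the same rank, and that the modules produced by the deformation argument are actually absolutely simple over $K$ (not merely simple). The rank equality from step one takes care of the dimension bookkeeping, while the explicit description of $\Irr(\C[G(l,1,n)])$ in terms of $l$-partitions gives the splitting field condition.
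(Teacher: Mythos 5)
The paper does not prove this statement; it cites \cite{ArikiKoike1994}. The original Ariki--Koike argument is \emph{not} a deformation argument: they construct, for each $l$-partition $\bla$, an explicit seminormal representation (a $q$-analogue of Young's orthogonal form) on a basis indexed by standard $\bla$-tableaux, check absolute irreducibility via orthogonality, and then close the proof with the dimension count $\sum_{\bla} f^{\bla}_{\text{std}}{}^2 = l^n n!$. Indeed, in the present paper Tits' deformation theorem is invoked only \emph{after} this theorem, precisely as a consequence of split semi-simplicity, to get the bijection with $\Irr(W_n)$ — the opposite direction to the one you use.

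Your route is a reasonable alternative in outline, but your statement of Tits' deformation theorem is too strong, and this leaves a genuine gap. The theorem (Geck--Pfeiffer, Thm.\ 7.4.6) assumes \emph{both} the generic and the specialised algebra are already split semi-simple and concludes that the decomposition map is a dimension-preserving bijection; it does not let you pull split semi-simplicity back from a specialisation. What does transfer cleanly via the discriminant of the regular trace form (an element of $A$ whose image under $\theta$ is nonzero) is semi-simplicity of $\bar{K}\otimes_A\bH_n$ over the algebraic closure. Splitness over $K$ itself can fail in this generality: $H=\Z[x]/(x^2-2)$ over $A=\Z$ with $\theta$ reduction mod $7$ has a split semi-simple specialisation while $K\otimes H=\Q(\sqrt2)$ is not split. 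You acknowledge this yourself by calling absolute simplicity over $K$ "the main technical point," but you never supply the argument. Closing it requires producing the irreducibles over $K$ directly — exactly what Ariki--Koike's seminormal construction does — or a character-rationality argument; as written your sketch proves semi-simplicity and splitness over $\bar K$ only.
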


As a consequence, Tits' deformation theorem implies that the irreducible representations of $\bH_{K,n}$ are in one-to-one correspondence with
the irreducible representations of the complex reflection group $W_n=G(l,1,n)$ over $\mathbb{K}=\Frac(R)$.
It is known that these representations are in one-to-one correspondence with $\Pi_l(n)$, the set of $l$-partitions of $n$. 
Therefore, we can write $$\Irr(\bH_{K,n})=\{E^{\bla}; \bla\vdash_l n\}.$$
The representations $E^{\bla}$, $\bla\vdash_l n$, are called the Specht representations.
The correspondence $ \Pi_l(n)\ra \Irr(\bH_{K,n}) , \bla \mapsto E^{\bla}$ is explicitely described in \cite[Section 3]{ArikiKoike1994}.

We have the following criterion of semi-simplicity for specialised Ariki-Koike algebras.

\begin{thm}[Ariki, \cite{Ariki2002}] \label{ss}
 Let $\theta: A \lra k$ be a specialisation, with $k=\Frac(\theta(A))$.  Denote $\eta=\theta(u)\neq0$ and $\eta_i=\theta(V_i)$, for all $i\in \llb 1,l\rrb$.
Then the specialised algebra $\Hkn(\eta,\eta_1,\dots,\eta_l)=k\otimes_A \bH_n$ is (split) semi-simple if and only if 
$$(\prod_{-n<d<n}\prod_{1\leq i<j\leq l}(\eta^d\eta_i-\eta_j))(\prod_{1\leq i\leq n}(1+\eta+\dots+\eta^{i-1}))\neq 0.$$
\end{thm}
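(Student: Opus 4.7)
The plan is to reduce the semisimplicity question to the non‑vanishing of the \emph{Schur elements} of $\bH_n$, then invoke their known explicit product formula.

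First I would recall that the generic algebra $\bH_n$ is a symmetric $A$-algebra: the canonical trace $\tau$ sending $T_{i_1}\cdots T_{i_k}$ to $1$ if the word is empty and to $0$ otherwise (suitably normalised) is a symmetrising form. Since by Ariki--Koike the algebra $\bH_{K,n}$ is split semisimple with $\Irr(\bH_{K,n})=\{E^{\bla}\,;\,\bla\vdash_l n\}$, there exist Schur elements $s_{\bla}\in A$ defined by the decomposition
\[
\tau \;=\; \sum_{\bla\vdash_l n} \frac{1}{s_{\bla}}\,\chi^{\bla},
\]
where $\chi^{\bla}$ is the character of $E^{\bla}$. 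I would then invoke the standard criterion (Geck--Rouquier, or for instance \cite{GeckRouquier2001}) for symmetric algebras: the specialised algebra $\Hkn(\eta,\eta_1,\dots,\eta_l)$ is split semisimple if and only if $\theta(s_{\bla})\neq 0$ for every $\bla\vdash_l n$. This converts the theorem into the task of identifying the union of the irreducible divisors of the $s_{\bla}$.

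Next I would use the explicit closed formula for the Schur elements of Ariki--Koike algebras (due to Geck--Iancu--Malle, and independently Mathas). Each $s_{\bla}(u,V_1,\dots,V_l)$ is, up to a unit monomial, a product of two kinds of factors: hook‑type factors $[h]_u := 1+u+\cdots+u^{h-1}$ for hook lengths $h$ appearing in the components $\la^c$, and \emph{inter‑component} factors of the shape $u^{d}V_i - V_j$ indexed by pairs of contents, one from $\la^i$ and one from $\la^j$. Since the hook lengths of a component of $\bla\vdash_l n$ lie in $\llb 1,n\rrb$ and the content differences between nodes of different components lie in the range $-n<d<n$, every irreducible divisor of every $s_{\bla}$ is of the form $\eta^d\eta_i-\eta_j$ with $-n<d<n$ and $i\neq j$, or $1+\eta+\cdots+\eta^{i-1}$ with $1\leq i\leq n$ (the factors $\eta^d\eta_i-\eta_j$ and $\eta^{-d}\eta_j-\eta_i$ agreeing up to a unit, one only needs $i<j$).

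Finally I would verify the converse: each factor in the criterion genuinely appears in some Schur element. For the cyclotomic factor $[i]_\eta$ this is immediate by taking $\bla$ to contain a hook of length $i$ in some component. For $\eta^d\eta_i-\eta_j$ with $-n<d<n$ and $i<j$, one constructs a multipartition with a single node in $\la^i$ and a single node in $\la^j$ (and the remaining $n-2$ nodes placed so as not to create undesired cancellations) whose content difference equals $d$; the Schur element formula then contains the factor $\eta^d\eta_i-\eta_j$. Combining the two inclusions proves the equivalence.

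The main obstacle is the explicit Schur element formula used in the second paragraph: its derivation is a delicate induction on $n$ using branching from $\bH_{n-1}$ to $\bH_n$ together with a careful bookkeeping of standard tableau contents, and is the technical heart of the result. Here I would simply import it as a black box from \cite{ArikiKoike1994} and the subsequent Geck--Iancu--Malle/Mathas computations.
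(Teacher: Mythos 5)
The paper does not prove this theorem; it simply imports it from Ariki (the cited source is Ariki's 2002 book, where the argument goes via Jucys--Murphy elements and the seminormal form, not via Schur elements). Your proposal is therefore a genuinely different route. The reduction to Schur elements via the Geck--Rouquier criterion for symmetric algebras is a standard and correct strategy, and it has the advantage of modularity: once the closed formula for $\bc^{\bla}$ is available, both directions of the ``iff'' become statements about factorisations of Laurent polynomials, with no tableau combinatorics or induction on $n$ needed. Ariki's original approach, by contrast, is self-contained and does not require the (nontrivial) Geck--Iancu--Malle/Mathas product formula as a black box.

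There is, however, a factual slip in your second and third paragraphs that needs repair. You justify the range $-n<d<n$ by saying the inter-component factors $u^{d}V_i-V_j$ are ``indexed by pairs of contents, one from $\la^i$ and one from $\la^j$.'' That is not how the formula works, and the arithmetic does not match: a pair of nodes in distinct components of a multipartition of $n$ has content difference bounded by $\pm(n-2)$, whereas the criterion genuinely requires $d=\pm(n-1)$ as well (this is already visible for $n=2$, $l=2$, where the only two-component multipartition is $((1),(1))$ with content difference $0$, yet the criterion involves $d=\pm1$). The factors in the Geck--Iancu--Malle/Mathas formula are indexed by pairs of $\beta$-numbers (first-column hook lengths, including the ``phantom'' trailing entries of the $\beta$-set), not by pairs of actual nodes; it is the $\beta$-number differences that fill out the full range $-n<d<n$. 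Correspondingly, your construction ``a multipartition with a single node in $\la^i$ and a single node in $\la^j$ whose content difference equals $d$'' cannot produce any $d\neq 0$. For the converse you would instead exhibit, for each $d$, a multipartition whose $\beta$-sets for components $i,j$ contain entries differing by $d$, and then check the corresponding factor survives (is not cancelled) in the explicit product formula. The overall strategy is sound, but the formula has to be quoted in its $\beta$-number form for the bookkeeping to close.
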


\medskip

In his paper \cite{Mathas2004}, Mathas gives a thorough review of both the semi-simple and the modular representation theory of Ariki-Koike algebras.
In particular, we can recall this result by Dipper and Mathas:

\begin{thm}[Dipper and Mathas, \cite{DipperMathas2002}]
Let $\eta$ and $\eta_i$ ($i=1,\dots,l$) be as in Theorem \ref{ss}.
Denote $\cE=\{\eta_1,\dots,\eta_l\}$ and suppose that there exists a partition $\cE=\cE_1 \sqcup \dots \sqcup \cE_s$ such that 
$$\prod_{1\leq\al<\be\leq s} \prod_{(\eta_i,\eta_j)\in \cE_\al \times \cE_\be} \prod_{-n<N<n} (\eta^N\eta_i-\eta_j)\neq 0.$$
Then $\Hkn(\eta,\eta_1,\dots,\eta_l)$ is Morita equivalent to the algebra 
$$ \bigoplus_{\substack{n_1+\dots +n_s=n \\ n_1,\dots,n_s\geq 0}} \bH_{k,n_1}(\eta,\cE_1)\otimes_k \dots \otimes_k \bH_{k,n_s}(\eta, \cE_s).$$

\end{thm}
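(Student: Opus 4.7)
The plan is to construct a family of pairwise orthogonal idempotents in $\Hkn$ indexed by compositions $\bn=(n_1,\dots,n_s)$ of $n$, each projecting onto the part of the algebra where the "charge parameters" belong to a single block $\cE_\alpha$, and then to identify each corner algebra with the prescribed tensor product. The technical engine is the theory of Jucys--Murphy elements $L_1=T_0$ and $L_{k+1}=T_k L_k T_k$ for $k\geq 1$, whose joint spectrum (over the split semi-simple completion) on the Specht module $E^{\bla}$ is contained in $\{\eta^{b-a}\eta_c\,|\,(a,b,c)\in[\bla]\}$. The hypothesis $\prod_{\alpha<\beta}\prod_{(\eta_i,\eta_j)\in\cE_\alpha\times\cE_\beta}\prod_{-n<N<n}(\eta^N\eta_i-\eta_j)\neq0$ says exactly that eigenvalues attached to parameters from distinct parts $\cE_\alpha,\cE_\beta$ never coincide, which is the separation needed to make the entire argument go through.

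First, I would build, for each tuple $\bn$ with $\sum n_\alpha=n$, an element $e_{\bn}\in\Hkn$ via Lagrange interpolation in the $L_k$'s: for each $k\in\llb 1,n\rrb$ choose an index $\alpha_k\in\llb 1,s\rrb$, form the polynomial that vanishes on $\eta^\Z \cE_{\beta}$ for $\beta\neq\alpha_k$ modulo $-n<N<n$, and substitute $L_k$. The separation hypothesis ensures that all denominators in these interpolation formulas are invertible in $k$, so the $e_{\bn}$ are well-defined, mutually orthogonal, and sum to $1$. A classical argument using the commutativity of the $L_k$'s and Ariki's semi-simple classification (applied after base change to the algebraic closure) shows that $e_{\bn}$ is central in $\Hkn$, hence provides a block decomposition $\Hkn=\bigoplus_{\bn}e_{\bn}\Hkn$.

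Second, I would identify $e_{\bn}\Hkn$ with $\bH_{k,n_1}(\eta,\cE_1)\otimes_k\dots\otimes_k\bH_{k,n_s}(\eta,\cE_s)$ up to Morita equivalence. The natural candidate for the embedding of each tensor factor is obtained as follows: on the $e_{\bn}$-piece, there is a distinguished subset of indices $k$ where $L_k$ has eigenvalue in $\eta^\Z\cE_\alpha$, and a suitable conjugate of a subword $T_{i}\dots T_{j}$ of the generators $T_0,\dots,T_{n-1}$ plays the role of the generators of $\bH_{k,n_\alpha}(\eta,\cE_\alpha)$. To upgrade the algebra isomorphism on the corner to a Morita equivalence of the full block with the tensor product, I would exhibit a full idempotent $f_{\bn}\in e_{\bn}\Hkn$ with $f_{\bn}\Hkn f_{\bn}\cong\bH_{k,n_1}(\eta,\cE_1)\otimes\dots\otimes\bH_{k,n_s}(\eta,\cE_s)$, so that the standard $f A f$-versus-$A$ Morita criterion applies.

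The main obstacle, as I see it, is the explicit identification in the second step: the $T_i$ are not compatible with the block decomposition in an obvious way, and one must perform an inductive "shuffling" argument to show that, after cutting by $e_{\bn}$ and conjugating by appropriate products of $T_j$'s, the tensor factors indeed sit inside $e_{\bn}\Hkn e_{\bn}$ with the correct Ariki--Koike relations. The separation hypothesis is used repeatedly to invert expressions of the form $L_k-\eta^N\eta_i$ on the relevant weight spaces, which is precisely what allows the reshuffling to terminate and the generators of each $\bH_{k,n_\alpha}(\eta,\cE_\alpha)$ to be realised inside $\Hkn$.
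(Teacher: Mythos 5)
The paper does not prove this theorem: it is cited as an external result of Dipper and Mathas, so there is no internal argument to compare against, and your proposal must be evaluated on its own.

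Your high-level strategy is reasonable, but two steps, as written, are genuinely broken. The centrality argument fails: base change to the algebraic closure does \emph{not} make $\Hkn$ semi-simple --- the criterion of Theorem \ref{ss} depends on the values $\eta,\eta_i$, not on the ground field --- so you cannot invoke a semi-simple classification of modules after passing to $\overline{k}$. The correct route is to localise the generic ring $A$ at the Lagrange denominators (the separation hypothesis guarantees that $\theta$ factors through this localisation), observe that the idempotent is a \emph{symmetric} function of $L_1,\dots,L_n$ --- which requires summing over all admissible sequences $(\alpha_1,\dots,\alpha_n)$ with multiplicity profile $\bn$, not choosing one as you describe --- cite the known fact that symmetric functions of the Jucys--Murphy elements are central in the generic Ariki--Koike algebra, and then specialise.

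Second, the identification of the corner $f_{\bn}\Hkn f_{\bn}$ with the tensor product is the entire content of the theorem and your sketch only gestures at it. The relation $(T_0-\eta_1)\cdots(T_0-\eta_l)=0$ persists in $\Hkn$, whereas each tensor factor $\bH_{k,n_\alpha}(\eta,\cE_\alpha)$ needs a cyclotomic generator whose spectrum is exactly $\cE_\alpha$; no conjugate of a subword of $T_0,\dots,T_{n-1}$ directly produces such an element, and even after cutting by $e_{\bn}$ the eigenvalues of the $L_k$'s lie in $\eta^{\Z}\cE_\alpha$, not in $\cE_\alpha$. Neither the construction of these generators inside the corner nor the fullness of $f_{\bn}$ is established. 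For orientation, Dipper and Mathas's published proof in \cite{DipperMathas2002} is not built on a closed-form Lagrange idempotent at all; it works with an explicit \emph{non}-idempotent element of the form $\prod(L_k-\eta_j)$ and the bimodule it generates, exploiting the cellular structure of $\Hkn$, and the verification that this bimodule induces a Morita equivalence is the bulk of their paper.
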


As a consequence, in order to study non semi-simple Ariki-Koike algebras, 
it is sufficient to consider the specialisations $\Hkn(\eta,\eta_1,\dots,\eta_l)$ of $\bH_{n}$ defined via $$\begin{array}{llll}
\theta : & A & \lra & k\\
 & u & \longmapsto & \ze \\
 & V_i & \longmapsto & \ze^{r_i}  ,
                    \end{array}$$ 
where $\ze$ is a primitive root of unity of order $e$ (possibly infinite),
and $r_i\in\Z$ for all $i\in\llb 1,l \rrb$. 
Hence each specialisation considered from now on will be characterised by a pair $(e,\br)$ where $e$ is the multiplicative order of $\ze=\theta(u)$,
and $\br=(r_1,\dots,r_l)\in\Z^l$.
We will denote equally $\Hkn(\ze,\ze^{r_1},\dots,\ze^{r_j})=\Hkn^{(e,\br)}$ the specialised Ariki-Koike algebra corresponding  to $(e,\br)$.
We also denote $\theta_{(e,\br)}$ the associated specialisation map.

\medskip

Consider the specialisation $\theta_{(e,\br)}:A\lra k$ with $k=\Frac(\theta_{(e,\br)}(A))$.
In accordance with \cite{Ariki1994}, \cite{Ariki1996} (or \cite{GeckPfeiffer2000} for $l=1,2$), 
there is an associated decomposition map $d_{\theta_{(e,\br)}} : R_0(\bH_{K,n}) \lra R_0(\Hkn^{(e,\br)})$, and we can write
$$d_{\theta_{(e,\br)}}([E^{\bla}])=\sum_{M\in\Irr(\Hkn^{(e,\br)})} d_{\bla,M}[M].$$
The \textit{decomposition matrix} of $\Hkn^{(e,\br)}$ is the matrix 
$$D_{\theta_{(e,\br)}}=(d_{\bla,M})_{\begin{subarray}{l} \bla\vdash_l n \\ M\in\Irr(\Hkn^{(e,\br)}) \end{subarray} }.$$
The elements $d_{\bla,M}$ are called the \textit{decomposition numbers} of $\Hkn^{(e,\br)}$.
If the specialised algebra is semi-simple, the decomposition map is trivial and $D_{\theta_{(e,\br)}}$ is the identity matrix.
In general, this matrix has a rectangular shape, since $|\Irr(\Hkn^{(e,\br)})|\leq|\{E^{\bla}; \bla\vdash_l n\}|$
(\cite{Ariki1994}, \cite{Ariki1996}).
For simplicity, we say that $\bla$ \textit{appears} in the column $C$ indexed by $M$ if $d_{\bla,M}\neq 0$.

\medskip

Actually, we can recover this matrix $D_{\theta_{(e,\br)}}$ from any specialisation $\Hkn^{(e,\bs)}$ where $\bs$ is in the class of $\br$ modulo $\widehat{\fS}_l$.
It is important to understand the consequences of choosing another multicharge to get the decomposition matrix. \label{important}

Denote by $\cC(\br)$ (or simply $\cC$) the class of $\br$ modulo $\widehat{\fS}_l$, 
and by $\cC_e(\br)$ (or simply $\cC_e$) the class of $\br$ modulo the subgroup $\left\langle y_1, ..., y_l \right\rangle$ of $\widehat{\fS}_l$
(i.e. the set of all $\bs=(s_1, ..., s_{l})\in\Z^l$ such that $\forall 1\leq i\leq l, s_i=r_i \mod e$).

\begin{itemize}
 \item  If $\bs\in\cC_e$, then it is clear that $\Hkn^{(e,\bs)}=\Hkn^{(e,\br)}$. 
Besides, the decompositions maps are the same. Therefore, the decomposition matrices are strictly equal.

\item If $\bs\in\cC$, one still has $\Hkn^{(e,\bs)}=\Hkn^{(e,\br)}$. We therefore denote $\Hkn$ this algebra.
However, the decomposition maps do not necessarily coincide. 
In fact, if we have $\bs=\si(\br)$, for some $\si\in\fS_l$, denote $\theta:=\theta_{(e,\br)}$ and $\theta_\si:=\theta_{(e,\bs)}$.
Then we can write $d_{\theta}([E^{\bla}])=\sum_{M\in\Irr(\Hkn)} d_{\bla,M}[M]$
and $d_{\theta_\si}([E^{\bla}])=\sum_{M\in\Irr(\Hkn)} d_{\bla,M}^\si[M]$.

Now since $\theta_\si(V_i)=\ze^{s_i}=\ze^{r_{\si(i)}}=\theta(V_{\si(i)})$, we have 
$$ d_{\bla,M}^\si=d_{\bla^\si,M}, \quad \forall \bla\vdash_l n\,,\,\forall M\in\Irr(\Hkn)$$
where $\bla^\si=(\la^{\si(1)},\dots\la^{\si(l)})$.

This means that the decomposition matrices are equal up to a permutation of the rows.
Equivalently, they are strictly equal (denoted by $D$) provided the parametrisation of the rows is changed: 
the row of $D$ labeled by $\bla$ with respect to the parametrisation yielded by $\br$ is labeled by $\bla^\si$
with respect to the parametrisation yielded by $\bs=\si(\br)$.

\end{itemize}

To sum up, the specialised Ariki-Koike algebra only depends on $\cC$. We denote it by $\Hkn$.
Hence we consider that for any $\bs\in\cC$, we obtain one genuine matrix, that we denote dy $D$, but that
each element $\bs\in\cC$ yields a different (in general) parametrisation of the rows of $D$.

In our purpose to study canonical basic sets, introduced in the next section, it is crucial to understand which parametrisation we use.
In fact, we will fix a multicharge $\br$ once and for all, and this will fix a parametrisation of the rows of $D$.

\medskip

In the sequel, we will be interested with the shape of this decomposition matrix.
One of the classic problems is to find an indexation of the simple $\Hkn$-modules so that $D$ is upper unitriangular, that is,
$$D=\begin{array}{cc}
\overbrace{\rule{19mm}{0pt}}^{\Irr(\Hkn)} & \\
\begin{pmatrix} 1 & 0 & \dots & 0 \\ \star & 1 & \dots & 0  \\ \vdots & \star & \ddots & \vdots \\ \vdots & \vdots & \ddots & 1 \\ 
\vdots & \vdots &  & \star \\ \vdots & \vdots & \hdots & \vdots  \end{pmatrix} & \left. \rule{0pt}{15mm} \right\} \Pi_l(n) \end{array} $$

More precisely, we ask for an order $\leq$ on $\Irr(\Hkn)$ such that $D$ has the above shape with respect to this order, that is,
$i>j \Ra M_i\leq M_j$, if $M_i\in\Irr(\Hkn)$ parametrises the $i$-th column of $D$.
These problems have been solved in \cite{GeckJacon2011}, using the theory of canonical basic sets.
This approach enables us find a bijection between $\Irr(\Hkn)$ and a subset of $\Pi_l(n)$, and therefore to label the simple $\Hkn$-modules by certain $l$-partitions,
the \textit{Uglov multipartitions}.
Accordingly, the orders used to parametrise the columns of $D$ are orders on $l$-partitions (i.e. on the rows of $D$).

In this paper, we address the "converse" question: given a certain natural order on the set of multipartitions,
is it possible to find a parametrisation of $\Irr(\Hkn)$ by a subset of $\Pi_l(n)$ such that $D$ is upper unitriangular with respect to this order?
We first need to precise which specific orders we are interested in, and some background about canonical basic sets.

\subsection{Canonical basic sets}\label{canonicalbasicsets}

One can associate to each simple $\bH_{K,n}$-module $E^{\bla}$ its Schur element $\bc^{\bla}$.
Explicit formulas for computing $\bc^{\bla}$ have been given independently in \cite{GeckIancuMalle2000} and \cite{Mathas2007}.
In \cite{ChlouverakiJacon2012}, Chlouveraki and Jacon have showed that $\bc^{\bla}$ is actually an element of
$\Z[u^{\pm1},V_1^{\pm 1},\dots , V_l^{\pm 1}]$ (that is, a Laurent polynomial in the variables $u,V_1,\dots,V_l$).

Now fix $\bm=(m_1,\dots,m_l)\in\Q^l$.
One can define the degree of $\bc^{\bla}$ by setting
$$ \deg (u^p V_1^{p_1} \dots V_l^{p_l})= p+m_1s_1+\dots+m_ls_l, \mand$$ 
$$\deg (\bc^{\bla})= \min \{ \deg(u^p V_1^{p_1} \dots V_l^{p_l}) \; ; \; u^{p}V_1^{p_1}\dots V_l^{p_l} \text{ is a monomial appearing in } \bc^{\bla} \}.$$
Such an element $\bm$ is then called a \textit{weight sequence}. 
Note that this definition of the degree is different from the usual one for Laurent polynomials (namely, one usually takes the maximum of the degrees of the monomials).

Extending Lusztig's \cite{Lusztig1984} definition of the $\ba$-function, one can then introduce \textit{ generalised $\ba$-invariants} for the modules $E^{\bla}$.
We simply set $\ba^{\bm}(\bla)=-\deg (\bc^{\bla})$.
The map $E^{\bla}\mapsto \ba^{\bm}(\bla)$ is then called a \textit{generalised $\ba$-function}, and coincides with Lusztig's $\ba$-function when $l=1$ and $\bm=m_1=1$.

\medskip

\begin{rem}\label{remarkcyclo}
The weight sequence $\bm$ we just introduced also has another algebraic meaning.
In \cite{BroueMalle1993}, Brou\'e and Malle have introduced the notion of "cyclotomic" Hecke algebra.
In the case of an Ariki-Koike algebra, see \cite[Chapter 5]{GeckJacon2011},
this is a one-parameter specialisation of $\bH_n$, parametrised by a pair $(\bm,t)\in \Q^l\times \Q$.
Thanks to Theorem \ref{ss}, any cyclotomic specialisation is known to be semi-simple.
Note that in \cite{GeckJacon2011}, the generalised $\ba$-function is defined on this cyclotomic specialisation.
Interestingly, any non semi-simple algebra $\Hkn^{(e,\br)}=\Hkn$ can be obtained by specialising a certain cyclotomic algebra.
In fact, if $m_i= r_i-e(i-1)/pl$ with $\gcd(p,e)=1$ and $t$ is such that $tm_i\in\Z$ for all $i$, 
we have a cyclotomic algebra $\bH_{\mathbb{K}(y),n}$ depending on an indeterminate $y$,
which can be specialised to $\Hkn$ via $y\mapsto \ze^{1/t}:=\exp(2ip\pi/et)$. 
In other terms, the following diagram commutes:

$$\xymatrix{ \bH_n \ar[dd]_{\theta_{(e,\br)}} \ar[rd]^{\theta_y} &  \\ & \bH_{\mathbb{K}(y),n} \ar[ld]^{\tilde{\theta}} \\ \Hkn &  }$$

where $\begin{array}[t]{llll}
 \theta_y :   & A & \lra & \mathbb{K}(y) \\
              & u & \longmapsto & y^t \text{\quad with $t$ such that $t(r_i-\frac{e(i-1)}{pl})\in\Z$}, \\
              & V_i & \longmapsto & y^{t(r_i-\frac{e(i-1)}{pl})}\xi_l^{i-1} \text{\quad for } i\in\llb 1,l\rrb 
                    \end{array}$
										
is the cyclotomic specialisation, where $\xi_l=\exp(2i\pi/l)$,
\medskip

and  $\begin{array}[t]{llll} \tilde{\theta} : & \mathbb{K}(y) & \lra & k \text{\quad such that } \ze^{1/t}\in k \\
                                           & y  & \longmapsto & \ze^{1/t}. \end{array}$
            
\end{rem}

\medskip

Now, the $\ba$-invariants induce an order on Specht modules, namely $E^{\bla}\sqsubseteq E^{\bmu} \eq [\bla=\bmu \text{ or } \ba^{\bm}(\bla)<\ba^{\bm}(\bmu)]$.
The general notion of canonical basic sets  requires an order on the Specht modules. 
In the case of Ariki-Koike algebras, it is natural to use this algebraic order.
In fact, we will use a combinatorial order $\ll_\bm$ which contains the order $\sqsubseteq$ above.
This is the order on shifted $\bm$-symbols defined in \cite{GeckJacon2011}.

\medskip

The \textit{shifted $\bm$-symbol} of $\bla=(\la^1,\dots,\la^l)\vdash_l n$ of size $p\in\Z$ is the $l$-tuple $\fB_\bm(\bla)=(\fB_\bm^1(\bla),\dots,\fB_\bm^l(\bla))$,
where $\fB_\bm^j(\bla)=(\fB_{p+\lfloor m_j \rfloor}^{j}(\bla),\dots,\fB_1^{j}(\bla))$, 
with $\fB_i^{j}(\bla)=\la_i^j-i+p+m_j$, for all $j\in\llb 1,l\rrb$ and $i\in\llb 1, p+\lfloor m_j \rfloor\rrb$.
Note that $p$ must be sufficiently large, so that $p+\lfloor m_j \rfloor \geq 1 + h^j$ for all $j\in\llb 1,l\rrb$, where $h^j=\max_{\la_i^j\neq 0} i$.
This ensures in particular that each $\fB_\bm^j(\bla)$ is well defined.
As usual, we consider that each partition $\la^j$ of $\bla$ has an infinite number of parts $\la_i^j=0$.

The shifted $\bm$-symbol $\fB_\bm(\bla)$ is pictured by an array whose $j$-th line (numbered from bottom to top) corresponds to $\fB_\bm^j(\bla)$.

\begin{exa}
 Let $\bm=(1/2,2,-1)$ and $\bla=(1.1,\emptyset, 2)\vdash_3 4$. We choose $p=3$. 
Then $$\fB_\bm (\bla)=\begin{pmatrix} 
                       0 & 3 & & & \\
                       0 & 1 & 2 & 3 & 4 \\
                       1/2 & 5/2 & 7/2 & 
                      \end{pmatrix}.$$

\end{exa}

Note that this symbol can easily be obtained from the shifted $\bm$-symbol of the empty $l$-partition, by adding the parts of $\la^i$ to the $i$-th row 
(numbered from bottom to top) of $\fB_\bm(\bemptyset)$, from right to left.

The symbol $\fB_\bm(\bla)$ has $h=lp+\sum_{1\leq j\leq l}\lfloor m_j\rfloor$ elements.

Write $\fb_\bm(\bla)=(\fb_\bm^1(\bla),\fb_\bm^2(\bla),\dots, \fb_\bm^h(\bla))$ the sequence of elements in $\fB_\bm(\bla)$, in decreasing order.
For $\bla, \bmu \in\Pi_l(n)$, we define the order $\ll_\bm$ by $$\bla \ll_\bm \bmu \quad \eqdef \quad \bla=\bmu \text{ or } \fb_\bm(\la) \rhd \fb_\bm(\bmu),$$
in the general sense of dominance order on sequences of rational numbers defined in Section \ref{generalnotations}.

\medskip

Set also $ n_\bm(\bla)=\sum_{1\leq i \leq h} (i-1)\fb_\bm^i(\bla).$
By \cite[Proposition 5.5.11]{GeckJacon2011}, we can compute the $\ba$-invariant of $\bla$ using symbols, namely
$\ba^{\bm}(\bla)=t(n_\bm(\bla)-n_\bm(\bemptyset))$.
As a direct consequence, we have the following compatibility property (\cite[Proposition 5.7.7]{GeckJacon2011}):
\begin{equation}\label{compatibility}[\bla \ll_\bm \bmu \text{ and } \bla\neq\bmu] \quad \Ra \quad \ba^{\bm}(\bla)<\ba^{\bm}(\bmu).\end{equation}


\medskip

This order on symbols has the advantage of being easier to handle, since it is purely combinatorial.
Besides, it naturally appears in the representation theory of the complex reflection groups of type $G(l,1,n)$.
For instance, when $l=2$ (i.e. in type $B$), Geck and Iancu showed in \cite[Theorem 7.11]{GeckIancu2012} that this order is compatible with
an order $\preceq_L$, defined (in \cite{Geck2009}) on $\Irr(G(l,1,n))$ using Lusztig's families
(and related to the order $\leq_{\cL\cR}$ defining Kazhdan-Lusztig cells).
In fact, they showed that in general, one has $\bla\preceq_L\bmu \Ra \bla\ll_\bm\bmu$, and that in some particular cases, both orders are equivalent.
Note that the version of the order $\ll_\bm$ defined in \cite[Section 3]{GeckIancu2012} is slightly different from the one we just defined.
Moreover, Chlouveraki, Gordon and Griffeth have used the compatibility property (\ref{compatibility}) above to deduce information on the 
decomposition of standard modules of Cherednik algebras, \cite[Theorem 5.7]{ChlouverakiGordonGriffeth2011}.
Also, Liboz showed in \cite{Liboz2012} that the order $\ll_\bm$ contains the order induced by the "$\bc$-function" on $\Irr(G(l,1,n))$
used in the representation theory of Cherednik algebras.

\medskip

We can now state the definition of a canonical basic set in the sense of \cite{Geck2007}, 
for both the order $\ll_\bm$ and the one induced by the $\ba$-invariants.

Fix $\br\in\Z^l$ and $e\in\Z_{>1}$.
Consider the specialised algebra $\Hkn^{(e,\br)}=\Hkn$. For $M\in\Irr(\Hkn)$, set $\sS(M)=\{\bla\vdash_l n \,|\, d_{\bla,M}\neq 0\}$.
Note that this set strongly depends on the choice of $\br$ (which is fixed once and for all), as explained on page \pageref{important}.

\begin{defi} \label{basicset} 
Assume that the following conditions hold:
\begin{enumerate}
 \item For $M\in\Irr(\Hkn)$, there exists a unique element $\bla_M\in\sS(M)$ such that for all $\bmu\in\sS(M)$, $\bla_M \ll_\bm \bmu$
(resp. $\ba^{\bm}(\bla_M)<\ba^{\bm}(\bmu)$ or $\bla=\bmu$).
 \item The map $\Irr(\Hkn)\ra\Pi_l(n)$, $M\mapsto \bla_M$ is injective.
 \item We have $d_{\bla_M,M}=1$, for all $M\in\Irr(\Hkn)$.
\end{enumerate}
Then the set $\sB:=\{\bla_M ; M\in\Irr(\Hkn)\}\subseteq \Pi_l(n)$ is in one-to-one correspondence with $\Irr(\Hkn)$. 
It is called a \textit{(generalised) canonical basic set for $(\Hkn, \br)$ with respect to $\ll_\bm$} (resp. \textit{with respect to the $\ba$-function}).
\end{defi}

\begin{rem} 
As a direct consequence, if there exists a canonical basic set for $(\Hkn,\br)$ with respect to $\ll_\bm$ (or with respect to the $\ba$-function), it is unique.
Moreover, the three conditions of Definition \ref{basicset} encode the fact that $D$ is upper unitriangular with respect to $\ll_\bm$.
\end{rem}

\begin{rem}
We have given here the definition of a canonical basic set for both the orders $\ll_\bm$ and the one induced by the $\ba$-function.
In this paper, we only consider the combinatorial order $\ll_\bm$, which is no real restriction because of the relation (\ref{compatibility}) between both orders.
However, both orders enjoying the same "continuity" property with respect to $\bm$ (see Lemma \ref{lemmsingular}) we give two versions of the main
result of this paper, namely Theorem \ref{theorem} (for the order $\ll_\bm$), and Theorem \ref{theorem0} (for the order induced by the $\ba$-function).
\end{rem}

\begin{rem}\label{importantremark}
Just like decomposition matrices, it is important to understand how the notion of canonical basic set depends on $\br$.
Indeed, this multicharge determines a parametrisation of the rows of $D$.
This parametrisation being invariant in the class $\cC_e$, we are ensured that for $\bs\in\cC_e$, 
if $(\Hkn,\bs)$ admits a canonical basic set $\sB$, 
then $\sB$ is the canonical basic set for $(\Hkn, \br)$.
However, this is not true for general $\bs\in\cC$. For such a multicharge,
it is sometimes possible to find a canonical basic set for $(\Hkn,\bs)$, even if $(\Hkn,\br)$ does not admit any canonical basic set
and both algebras are equal, see Example \ref{counterexample0}.
Also, if $\sB$ is the canonical basic set for $(\Hkn,\br)$, it is sometimes possible to find $\bs\in\cC$  
such that $(\Hkn,\bs)$ admits a canonical basic set $\sB'$ and $\sB'\neq\sB$, see Example \ref{exampledifferentbasicsets}.

\end{rem}

\medskip

Fix $\br\in\Z^l, n, l  \in \mathbb{Z}_{>1}, e \in \mathbb{Z}_{>1}$. 
The question of determining canonical basic sets for $(\Hkn,\br)$ has been solved in some cases.
First, in \cite{Jacon2004}, Jacon has studied the case where $m_i=r_i-e(i-1)/l$
(which is also when $\theta_{(e,r)}$ can be decomposed in a cyclotomic specialisation and a non semi-simple specialisation, as noticed in Remark \ref{remarkcyclo}),
and in \cite{GeckJacon2011}, Geck and Jacon have explained the more general case where $m_i=r_i-v_i$ with some restrictions on $(v_1,\dots,v_l)$.
We now want to fully review which values of $\bm\in\Q^l$ yield a canonical basic set for $(\Hkn,\br)$, and which do not.
We will see that unless $\bm$ belongs to some hyperplanes of $\Q^l$, the algebra $(\Hkn,\br)$ admits a canonical basic set with respect to $\ll_\bm$, 
which we can explicitely describe.

\medskip

\begin{rem}
Note that it is already known that canonical basic sets do not always exist. 
For instance, in level 2, that is, when $\Hkn$ can be seen as an Iwahori-Hecke algebra of type $B_n$,
Geck and Jacon have computed in \cite[Example 3.1.15 (c)]{GeckJacon2011} a decomposition matrix, associated to a specialisation (with $k=\mathbb{F}_2(v)$ and $n=2$)
which does not admit any canonical basic set with respect to the $\ba$-function.

Enlightened by \cite[Examples 5.7.3, 5.8.4 and 6.7.5]{GeckJacon2011}, 
we can then regard the cases of non-existence of a canonical basic set for $\Hkn$ (Proposition \ref{basicsetmsingular})
as anologues of this non-existence result in type $B_n$.

\end{rem}

\medskip

\begin{rem} 
The existence of canonical basic sets for Hecke algebras of more general reflection groups have been notably studied in \cite{ChlouverakiJacon2011} and
\cite{ChlouverakiJacon2012}.

\end{rem}

\medskip

\begin{rem}
We could have adressed a slightly different question.
Since we can recover the decomposition matrix from any $\bs\in\cC$ (up to a change of parametrisation of the rows), we could also ask
which weight sequences $\bm$ yield a canonical basic set for $(\Hkn,\bs)$, for some $\bs\in\cC$. 
Note that solving the first question automatically solves this weaker question, by taking the reunion over $\bs\in\cC$ of all weight sequences $\bm$
that yield  a canonical basic set for $(\Hkn, \bs)$. 
\end{rem}

\medskip

First, let us recall what particular values of $\bm$ are known to yield canonical basic sets for $(\Hkn,\br)$.

\section{Existence of canonical basic sets for appropriate parameters}\label{existenceforappropriateparameters}

Consider the specialised Ariki-Koike algebra $\Hkn=\Hkn^{(e,\br)}$ where $e\in\Z_{>1}$, and $\br=(r_1,\dots,r_l)\in\Z^l$.
We want to find a canonical basic set for $(\Hkn,\br)$, in the sense of Definition \ref{basicset}.
The following results prove that it is always possible to find $\bm\in\Q^l$ such that
$(\Hkn,\br)$ admits a canonical basic set with respect to $\ll_\bm$.
Besides, they can be explicitely described, either "directly" (FLOTW $l$-partitions) or recursively (Uglov $l$-partitions).
These results can be found in \cite{GeckJacon2011}.

\subsection{FLOTW multipartitions as canonical basic sets}

Let $\sS_e^l=\{ \br=(r_1,\dots,r_l)\in\Z^l\,|\, 0\leq r_j-r_i < e \text{ for all } i<j \}$.
In this section, we assume that $\br\in\sS_e^l$. 

The following definition is due to Foda, Leclerc, Okado, Thibon and Welsh, see \cite{FLOTW1999}.
\begin{defi}
 Let $\bla=(\la^1,\dots,\la^l)\vdash_l n$ and $\br\in\sS_e^l$.
Then $\bla$ is called a \textit{FLOTW $l$-partition} if:
\begin{enumerate}
 \item For all $j\in\llb 1, l-1\rrb$, $\la_i^j\geq \la_{i+r_{j+1}-r_j}^{j+1} \, ,\forall i\geq 1$; and $\la_i^l\geq \la_{i+e+r_1-r_l}^1\,,\forall i\geq 1$.
 \item The residues of the rightmost nodes of the length $p$ rows (for all $p>0$) of $\bla$ do not cover $\llb 0, e-1 \rrb$.
\end{enumerate}
Denote by $\Psi_{\br}$ the set of FLOTW $l$-partitions associated to $\br\in\sS_e^l$, and by $\Psi_{\br}(n)\subset~ \Psi_{\br}$ the ones of rank $n$.
\end{defi}

\begin{exa}
 If $l=3$, $e=4$ and $\br=(0,0,2)$, we have:
\begin{eqnarray*} \Psi_\br(3) & = &  \Big\{ \;  (3,\emptyset,\emptyset) \; ,\; (2,1,\emptyset) \;,\; (1,1,1) \;,\; (1.1,1,\emptyset) \;,\; 
(2.1 ,\emptyset,\emptyset) \;,\; (2,\emptyset,1) \;,\;  \\ & & (1,\emptyset,2) 
\;,\; (1.1,\emptyset,1) \;,\;  (1,\emptyset,1.1) \;,\; (\emptyset,\emptyset,2.1) \;,\; (\emptyset,\emptyset,3) \; \Big\}. \end{eqnarray*}
\end{exa}

\begin{rem}
 If $l=1$, the FLOTW $l$-partitions are exactly the $e$-regular partitions, that is, the partitions with at most $e-1$ equal parts.
\end{rem}

We have the following result by Geck and Jacon.

\begin{thm}[{\cite[Theorem 5.8.2]{GeckJacon2011}}]
 Let $\br\in\sS_e^l$, $\bv=(v_1,\dots,v_l)\in\Q^l$ such that $i<j\Ra 0<v_j-v_i<e$, and set $\bm=\br-\bv=(r_1-v_1,\dots,r_l-v_l)$.
 Then $(\Hkn,\br)$ admits a canonical basic set with respect to $\ll_\bm$, namely the set $\Psi_{\br}(n)$.
\end{thm}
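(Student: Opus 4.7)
The plan is to combine Ariki's theorem (Theorem \ref{arikistheorem} as referenced earlier) with a careful analysis of the triangularity of the canonical basis of the highest weight $\Ue$-module $V(\br)$ inside the Fock space $\cF_\br$, and to show that this triangularity is in fact compatible with the symbol order $\ll_\bm$ for the specific weight sequence $\bm = \br - \bv$ prescribed in the statement.

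First I would invoke Ariki's theorem to identify the decomposition numbers $d_{\bla,M}$ with specialisations at $v=1$ of coefficients of the lower global crystal basis $\{G(\bla) \mid \bla \in \Psi_\br\}$ of $V(\br)$. Since $\br \in \sS_e^l$, the set $\Psi_\br(n)$ is known to label this basis, so $|\Psi_\br(n)| = |\Irr(\Hkn)|$, immediately giving the bijection in the definition of canonical basic set (once triangularity is established). Each canonical basis vector admits the expansion
\begin{equation*}
G(\bla) = |\bla\rangle + \sum_{\bmu} d_{\bmu,\bla}(v)\, |\bmu\rangle,
\end{equation*}
with $d_{\bmu,\bla}(v) \in v\Z[v]$, so that the coefficient of $|\bla\rangle$ specialises to $1$, taking care of condition (3) of Definition \ref{basicset}. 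What remains is to identify the indexing $M \leftrightarrow \bla_M$ and to check that the other multipartitions $\bmu$ occurring in $G(\bla)$ (which make up $\sS(M) \setminus \{\bla\}$ at $v = 1$) satisfy $\bla \ll_\bm \bmu$.

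The core step is the order compatibility. The multipartitions $\bmu$ that can appear in $G(\bla)$ are well known to lie below $\bla$ in a partial order coming from the weight grading of $\cF_\br$ and the successive applications of the Kashiwara operators; in fact, any such $\bmu$ has the same $\Ue$-weight as $\bla$, hence the same residue multiset of nodes with respect to $(e,\br)$. I would then translate this weight-preserving condition into a statement on shifted $\bm$-symbols: the hypothesis $i<j \Ra 0<v_j-v_i<e$ is precisely what is needed so that the content vector of $\bla$ with respect to $\bm$ determines, and is determined by, its $\bm$-symbol up to the symbol-order refinement. This is the content of the key combinatorial lemmas of \cite[Chapter 5]{GeckJacon2011}: under this constraint on $\bv$, the multiset equality of residues together with a dominance-type relation on the cumulative partial sums of contents forces $\fb_\bm(\bla) \rhd \fb_\bm(\bmu)$ whenever $\bmu \neq \bla$ appears in $G(\bla)$. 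Combined with the $v \to 1$ specialisation of Ariki's theorem, this gives precisely the triangularity demanded in Definition \ref{basicset}.

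The hard part is the order-matching step: showing that the partial order encoded by the triangularity of the canonical basis of $V(\br)$ is refined by $\ll_\bm$ on $\Pi_l(n)$. This rests on two facts that I would isolate as lemmas: (a) a combinatorial description of which $\bmu$ can possibly occur in $G(\bla)$ (same weight, reachable from $\bla$ by a chain of lowering Kashiwara operators followed by raising ones), and (b) a comparison of symbols stating that under the constraint $0 < v_j - v_i < e$ for $i<j$, two multipartitions of the same weight are automatically comparable via $\ll_\bm$ in the direction compatible with the Fock space order. Once (a) and (b) are in place, the remaining verifications --- the bijectivity $M \mapsto \bla_M$ and the uniqueness of the minimal element of $\sS(M)$ --- are formal consequences, and the theorem follows. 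For the reader's convenience I would also note the specialisation to Jacon's earlier result \cite{Jacon2004}, recovered by taking $v_i = e(i-1)/l$.
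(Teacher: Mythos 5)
The key issue is characteristic. Your proposal rests on Ariki's theorem (Theorem \ref{arikistheorem}), which the paper is careful to note applies only when $\charac(k)=0$. But the theorem you are trying to prove is stated (and, as the paper explicitly emphasises immediately after the statement, is true) \emph{regardless of the characteristic of $k$}: the paper even contrasts this with Theorem \ref{bs}, which is set off in a separate subsection headed by the hypothesis $\charac(k)=0$ precisely because that is where Ariki's machinery enters. Your argument, as laid out, would therefore only establish the result over fields of characteristic zero. To extend to positive characteristic one needs an additional step — in Geck and Jacon's treatment this is a factorisation of decomposition maps (an ``adjustment matrix'' argument), together with the fact that the number of simple $\Hkn$-modules is independent of the characteristic, so that the basic set obtained in characteristic zero is forced to coincide with the one in positive characteristic. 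Without that step the proposal does not prove the stated theorem.

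Beyond that omission, the characteristic-zero part of your sketch follows the expected route: invoke Ariki to identify decomposition numbers with canonical-basis coefficients at $q=1$, note that when $\br\in\sS_e^l$ the set $\Psi_\br(n)$ labels the crystal (this is the FLOTW/Jacon identification $\Phi_\br(n)=\Psi_\br(n)$ on the fundamental domain, which you should cite explicitly rather than leaving implicit), and then show that whenever $\bmu\neq\bla$ occurs in $G(\bla)$ one has $\bla\ll_\bm\bmu$. This last order-compatibility is the real content of the proof and you outsource it wholesale to ``key combinatorial lemmas'' without indicating what makes the hypothesis $0<v_j-v_i<e$ (for $i<j$) do the work. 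In the original source this is a concrete computation comparing the Uglov order on the Fock space (or a dominance order on $\beta$-numbers coming from the wedge realisation) with the shifted-symbol order $\ll_\bm$, and the inequality constraint on $\bv$ is exactly what makes the two comparable; a complete proof cannot treat this as a black box. Finally, a small slip: the elements with $d_{\bmu,\bla}(1)\neq 0$ are all of $\sS(M)$, not $\sS(M)\setminus\{\bla\}$, since $d_{\bla,\bla}(1)=1$.
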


Note that since $\sS_e^l$ contains a fundamental domain for the action of $\widehat{\fS}_l$ on $\Z^l$ (the one described in section \ref{generalnotations}), 
and since $\Hkn$ depends only on the class $\cC$ of $\br$ modulo $\widehat{\fS}_l$,
it is relevant to only consider the elements $\br\in\sS_e^l$.
Besides, this theorem holds regardless of the characteristic of the field $k$, and $\Psi_\br(n)$ has the advantage of being directly computable.

\subsection{Ariki's theorem and Uglov multipartitions as canonical basic sets} 

We now want to find a canonical basic set for $(\Hkn,\br)$ for an arbitrary value of $\br$.
In this subsection, we will need to assume that $\charac(k)=0$.
Indeed, this is an essential condition for Ariki's theorem to apply. 
His result links the theory of canonical bases (in the sense of Kashiwara, or Lusztig) of quantum groups with the modular representation theory
(in characteristic zero) of Ariki-Koike algebras.

\medskip

We do not recall here the theory of quantum groups. We refer to \cite{Ariki2002} and \cite{HongKang2002} for detailed background on $\Ue$ in particular.
We denote by $e_i, f_i, t_i, t_i^{-1}, \fd,\fd^{-1}$ the generators of $\Ue$,
and by $\omega_i$, $i\in\llb 0, e-1\rrb$, the fundamental weights of $\Ue$.

We redefine the Fock space and its properties.
Let $\br\in\Z^l$, $q$ be an indeterminate.
The Fock space $\cF_\br$ is the $\Q(q)$-vector space with formal basis $|\bla,\br\rangle$ where $\bla\vdash_l n$, i.e.
$$\cF_\br=\bigoplus_{n\in\Z_{\geq0}}\bigoplus_{\bla\vdash_l n} \Q(q) |\bla,\br\rangle.$$

We define an order on the set of addable or removable $i$-node of a $l$-partition $\bla$.
Let $\ga=(a,b,c)$ and $\ga'=(a',b',c')$ be two removable or addable $i$-nodes of $\bla \vdash_l n$.
We write $$\ga\prec_{(\br,e)} \ga' \text{ if }   \left\{      \begin{array}{l}      b-a+r_c<b'-a'+r_{c'} \text{ or }
\\  b-a+r_{c}=b'-a'+r_{c'} \mand c>c'.   \end{array}   \right.$$

Note that if $b_a+r_c=b'-a'+r_{c'}$ and $c=c'$, then $\ga$ and $\ga'$ are in the same diagonal of the same Young diagram,
so they cannot be both addable or removable.
Hence this order is well-defined.

Let $\bla\vdash_l n$ and $\bmu\vdash_l n+1$ such that $[\bmu]=[\bla]\cup\{\ga\}$ where $\ga$ is an $i$-node.
We set  $$\begin{aligned} N_i^{\prec}(\bla,\bmu)= &
\sharp\{\text{addable $i$-nodes $\ga'$ of $\bla$ such that $\ga'\prec_{(\br,e)} \ga$}\}-\\ &
\sharp\{\text{removable $i$-nodes $\ga'$ of $\bmu$ such that $\ga'\prec_{(\br,e)} \ga$}\},                                   
                                     \end{aligned}$$

$$\begin{aligned} N_i^{\succ}(\bla,\bmu)= &
\sharp\{\text{addable $i$-nodes $\ga'$ of $\bla$ such that $\ga'\succ_{(\br,e)} \ga$}\}-\\ &
\sharp\{\text{removable $i$-nodes $\ga'$ of $\bmu$ such that $\ga'\succ_{(\br,e)} \ga$}\},                                   
                                     \end{aligned}$$

$$N_i(\bla)=\sharp\{\text{addable $i$-nodes of $\bla$}\}-\sharp\{\text{removable $i$-nodes of $\bla$}\},$$

$$\mand \quad N_\fd(\bla)=\sharp\{\text{$0$-nodes of } \bla \}.$$

\label{espacedefock}

The following result is due to Jimbo, Misra, Miwa and Okado.
\begin{thm}[\cite{JMMO1991}] \label{jmmo} Let $\bla\vdash_l n$.
The formulas 
$$e_i\left. |\bla, \br\right\rangle = \sum_{\substack{\bmu\vdash_l n-1 \\ \fr_e([\bla]\backslash[\bmu])=i}} q^{-N_i^{\prec}(\bmu,\bla)}\left.|\bmu,\br\right\rangle,$$
$$f_i\left. |\bla, \br\right\rangle = \sum_{\substack{\bmu\vdash_l n-1 \\\fr_e([\bmu]\backslash[\bla])=i}} q^{-N_i^{\succ}(\bla,\bmu)}\left.|\bmu,\br\right\rangle,$$
$$t_i|\bla,\br\rangle = q^{N_i(\bla)}|\bla,\br\rangle \mand $$
$$\fd |\bla,\br\rangle = -(\De(\br)+N_\fd(\bla))|\bla,\br\rangle, \text{ for all $i\in\llb0,e-1\rrb$}$$
endow $\cF_\br$ with the structure of an integrable $\Ue$-module.
Here, $\De(\br)$ is a rational number defined in \cite{Uglov1999}.
\end{thm}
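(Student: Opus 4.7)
The plan is to verify that the four families of operators satisfy all defining relations of $\Ue$ (Drinfeld--Jimbo presentation: torus relations, $[e_i,f_j]$-commutators, Serre relations, and the $\fd$-compatibility) and then note integrability. Since the statement is originally due to Jimbo--Misra--Miwa--Okado, the strategy I would follow is the standard one: reduce to level one via tensoring, then carry out the key combinatorial bookkeeping on the total order $\prec_{(\br,e)}$ of addable/removable $i$-nodes.

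First I would handle the torus part. The relations $t_it_j=t_jt_i$ and the commutations with $\fd$ are immediate since all four of $t_i,t_j,\fd$ act diagonally on the basis $|\bla,\br\rangle$. The relations $t_ie_jt_i^{-1}=q^{-a_{ij}}e_j$ and $t_if_jt_i^{-1}=q^{a_{ij}}f_j$ reduce to the arithmetic identity
\[
N_j(\bla)-N_j(\bmu)=a_{ij}
\]
whenever $[\bmu]=[\bla]\cup\{\ga\}$ with $\ga$ an $i$-node; this in turn follows from inspecting how removing an $i$-node creates/destroys addable and removable $j$-nodes in the two neighbouring diagonals, together with the shape of the affine Cartan matrix of $\widehat{\mathfrak{sl}}_e$.

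Next I would verify the commutator $[e_i,f_j]=\delta_{ij}(t_i-t_i^{-1})/(q-q^{-1})$. For $i\neq j$ the relevant addable/removable nodes occur in disjoint diagonals, so each composite term $f_je_i|\bla,\br\rangle$ pairs up bijectively with a term of $e_if_j|\bla,\br\rangle$ coming from adding and removing the \emph{same} pair of nodes; the $N^\prec$ and $N^\succ$ exponents agree because the addable/removable $i$-nodes and $j$-nodes are ordered independently. For $i=j$ the key observation is that, using the total order $\prec_{(\br,e)}$ on addable and removable $i$-nodes of $\bla$, the sum $f_ie_i|\bla,\br\rangle - e_if_i|\bla,\br\rangle$ telescopes into a single term proportional to $|\bla,\br\rangle$ whose coefficient computes exactly $q^{N_i(\bla)-1}+q^{N_i(\bla)-3}+\dots+q^{-N_i(\bla)+1} = (q^{N_i(\bla)}-q^{-N_i(\bla)})/(q-q^{-1})$. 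This is the main technical calculation and where a careful case analysis, matching adjacent pairs of addable/removable nodes along the order $\prec_{(\br,e)}$, is required.

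For the Serre relations, the cleanest route is to observe that the chosen order $\prec_{(\br,e)}$ on nodes is precisely the lexicographic order coming from regarding $\cF_\br$ as the (twisted) tensor product $\cF_{r_1}\otimes\cdots\otimes\cF_{r_l}$ of level one Fock spaces via the coproduct of $\Ue$; the $q$-powers $q^{\pm N_i^{\prec}}$, $q^{\pm N_i^{\succ}}$ are exactly the exponents produced by the iterated coproduct acting on tensors of basic level one vectors. Since the Serre relations are known on each level one factor (Hayashi/Misra--Miwa) and are preserved under tensoring in any quasi-Hopf sense, they hold on $\cF_\br$. The $\fd$-weights are then matched using the formula for $\De(\br)$ in \cite{Uglov1999}. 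Finally, integrability follows because for fixed $\bla$, only finitely many $i$-nodes are addable or removable, so each $e_i,f_i$ acts locally nilpotently, and the weight spaces are finite-dimensional. I expect the Serre verification and the bookkeeping of the telescoping sum in the $[e_i,f_i]$-calculation to be the main obstacles; everything else is essentially formal.
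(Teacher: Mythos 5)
The paper does not actually prove this theorem; it is stated and attributed directly to \cite{JMMO1991}, so there is no internal proof to compare your sketch against. I will therefore assess the sketch on its own merits.

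Your handling of the torus relations, the $[e_i,f_j]$-commutator (including the telescoping geometric sum for $i=j$), and the local-nilpotence argument for integrability are all in order and match the standard treatment. The genuine gap is in your Serre-relation argument. You claim that $\prec_{(\br,e)}$ is "precisely the lexicographic order coming from regarding $\cF_\br$ as the (twisted) tensor product $\cF_{r_1}\otimes\cdots\otimes\cF_{r_l}$ of level one Fock spaces via the coproduct of $\Ue$," and that the exponents $N_i^\prec$, $N_i^\succ$ are "exactly the exponents produced by the iterated coproduct." This is false for the order in the theorem. The order that the iterated coproduct produces sorts addable/removable $i$-nodes by component index $c$ first and by position within a component second; this is the \emph{Kleshchev} order $\preck$ recalled in Section 5.1 of the paper, where the author explicitly remarks that it is the Kleshchev-order action (not the one in Theorem 3.6) that realizes $\cF_\bs$ as a tensor product of level-one Fock spaces. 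The order $\prec_{(\br,e)}$ of Theorem 3.6 sorts by content $b-a+r_c$ first and by component only to break ties, which yields a genuinely different $\Ue$-module structure on the same underlying vector space with the same basis; the two structures are isomorphic as abstract modules, but the isomorphism is not the identity, so you cannot transport the Serre relations from one to the other for free. To fill the gap you would either need to carry out a direct combinatorial verification of the quantum Serre relations for the $\prec_{(\br,e)}$-twisted action (as in the original reference), or invoke Uglov's $q$-deformed wedge-product realization from \cite{Uglov1999}, which does produce the $\prec_{(\br,e)}$ action but is a substantially more elaborate construction than an iterated coproduct on $\cF_{r_1}\otimes\cdots\otimes\cF_{r_l}$.
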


The element $|\bemptyset,\br\rangle\in\cF_\br$ is  a highest weight vector, of highest weight $\La_\br:=\omega_{r_1 \text{mod} e}+\dots + \omega_{r_l \text{mod} e}$.
We denote by $V(\br)\subset\cF_\br$ the irreducible highest weight $\Ue$-module spanned by $|\bemptyset,\br\rangle$.
This module $V(\br)$ is endowed with a crystal basis, a crystal graph $\sG_\br$, and a canonical (or global) basis, in the sense of \cite{Kashiwara1991}.
In order to determine the crystal graph $\sG_\br$, 
we first need to recall the definition of \textit{good addable} and \textit{good removable} $i$-nodes.

Let $\bla\vdash_l n$. Consider the set of its addable and removable $i$-nodes, ordered with respect to $\prec_{(\br,e)}$.
Encode each addable (resp. removable) $i$-node with the letter $A$ (resp. $R$). 
This yields a word of the form $A^{\al_1}R^{\be_1}\dots A^{\al_p}R^{\be_p}$. Delete recursively all the occurences of type $RA$ in this word.
We get a word of the form $A^\al R^\be$. Denote it by $w_i(\bla)$.
Let $\ga$ be the rightmost addable (resp. leftmost removable) $i$-node in $w_i(\bla)$. Then $\ga$ is called the \textit{good addable} (resp. \textit{good removable})
$i$-node of $\bla$.


\begin{defi}\label{uglov}
The set $\Phi_\br$ of \textit{Uglov $l$-partitions} is defined recursively as follows:
\begin{itemize}
 \item $\bemptyset\in\Phi_{\br}$,
 \item If $\bmu\in\Phi_{\br}$, then any $\bla$ obtained from $\bmu$ by adding a good addable node is also in $\Phi_{\br}$.
\end{itemize}
\end{defi}

Then we have the following result:

\begin{thm}[{\cite[Proposition 6.2.14]{GeckJacon2011}}] \label{recursivedefuglov}
The crystal graph $\sG_\br$ consists of vertices $|\bla,\br\rangle$ with $\bla\in\Phi_{\br}$
and arrows $|\bmu,\br\rangle \xrightarrow{\quad i \quad} |\bla,\br\rangle$ if and only if $[\bla]=[\bmu]\cup\{\ga\}$ where $\ga$ is the good addable $i$-node of $\bmu$.
\end{thm}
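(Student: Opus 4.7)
The plan is to deduce this description from the explicit $\Ue$-action on $\cF_\br$ given in Theorem \ref{jmmo} by passing to Kashiwara's crystal operators and restricting to the highest-weight submodule $V(\br)$. First, I would read off the Kashiwara operators $\tilde{e}_i, \tilde{f}_i$ on $\cF_\br$ by the standard $q$-valuation procedure applied to the formulas of Theorem \ref{jmmo}. Concretely, for $\bla\vdash_l n$ one arranges the addable and removable $i$-nodes of $\bla$ in decreasing order with respect to $\prec_{(\br,e)}$, encodes each as a letter $A$ or $R$, and cancels pairs $RA$ recursively until one obtains a reduced word $A^{\al}R^{\be}$. The claim is that $\tilde{f}_i|\bla,\br\rangle = |\bmu,\br\rangle$ where $\bmu$ is obtained by adding the rightmost surviving $A$ -- that is, the good addable $i$-node of $\bla$ -- and dually $\tilde{e}_i$ removes the leftmost surviving $R$. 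This is Uglov's higher-level extension of the JMMO computation, and the verification amounts to tracking the $q$-valuation of the coefficients $q^{-N_i^{\prec}}$ and $q^{-N_i^{\succ}}$ appearing in the action of the Chevalley generators.

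Second, I would invoke Kashiwara's general theory of crystal bases: since $V(\br)$ is the irreducible integrable $\Ue$-submodule of $\cF_\br$ generated by the highest weight vector $|\bemptyset,\br\rangle$ of weight $\La_\br$, its crystal $\sG_\br$ coincides with the connected component of $|\bemptyset,\br\rangle$ in the (abstract) crystal of $\cF_\br$. The vertices of this component are precisely the $|\bla,\br\rangle$ reachable from $|\bemptyset,\br\rangle$ by applying sequences $\tilde{f}_{i_k}\cdots\tilde{f}_{i_1}$, and the coloured arrows are given by these applications.

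Finally, combining the two previous steps with the recursive Definition \ref{uglov}, the result is immediate: $|\bla,\br\rangle$ appears as a vertex of $\sG_\br$ if and only if $\bla$ can be obtained from $\bemptyset$ by successive additions of good addable nodes, i.e. $\bla\in\Phi_\br$; and the arrows $|\bmu,\br\rangle\xrightarrow{\;i\;}|\bla,\br\rangle$ occur exactly when $[\bla]=[\bmu]\cup\{\ga\}$ with $\ga$ the good addable $i$-node of $\bmu$.

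The main obstacle is the first step: one must verify that the $RA$-cancellation on the ordered $i$-signature of $\bla$ correctly captures the $q$-valuation behaviour of the sums in Theorem \ref{jmmo}, so that exactly one good addable (resp. removable) node survives at $q=0$. The bookkeeping is subtle because the coefficients $q^{-N_i^{\prec}(\bmu,\bla)}$ and $q^{-N_i^{\succ}(\bla,\bmu)}$ depend on the positions of the other addable/removable $i$-nodes relative to the one being added or removed, and one must show that the contributions from cancelled $RA$-pairs do not interfere with the leading term. Once this combinatorial-to-quantum bridge is established for $\Ue$, the passage to $V(\br)$ and the identification with $\Phi_\br$ are formal consequences of the highest-weight theory of integrable crystals.
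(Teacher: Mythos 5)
The paper does not prove this statement: it is cited verbatim as \cite[Proposition 6.2.14]{GeckJacon2011}, so there is no internal proof to compare against. Evaluated on its own terms, your outline is the standard argument (going back to Misra--Miwa in level~$1$, then JMMO and Uglov in higher level), and the logical structure is sound: extract Kashiwara operators from the explicit $\Ue$-action of Theorem~\ref{jmmo} via $q$-valuation, show they are given by the signature rule (cancel $RA$ pairs, take the rightmost surviving $A$ for $\tilde f_i$ and leftmost surviving $R$ for $\tilde e_i$), then identify $\sG_\br$ with the connected component of $|\bemptyset,\br\rangle$ in the crystal of $\cF_\br$ and unwind Definition~\ref{uglov}.

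The one substantive gap is the one you flag yourself: step~1 is stated as a claim rather than verified, and that is where the entire content of the theorem lives. Concretely, one must check that the $A_0$-lattice $\cL=\bigoplus_{\bla} A_0\,|\bla,\br\rangle$ (with $A_0$ the localisation at $q=0$) is stable under $\tilde e_i,\tilde f_i$ and that, modulo $q\cL$, these operators act on the basis $\{|\bla,\br\rangle\}$ exactly by adding/removing the good $i$-node; this requires controlling the $q$-valuations of the coefficients $q^{-N_i^{\prec}}$, $q^{-N_i^{\succ}}$ not just on the leading term but across the whole sum, and in particular ruling out cancellation of leading terms. In a blind sketch it is reasonable to outsource this to Uglov, but you should be explicit that this is a nontrivial bookkeeping lemma, not a formal consequence of the formulas. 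Once that lemma is granted, the restriction to $V(\br)$ and the identification of its vertex set with $\Phi_\br$ are, as you say, formal consequences of highest-weight crystal theory.
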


We can now state Ariki's theorem, proved in \cite{Ariki1996}.

Consider the canonical basis $\cB_\br$ of $V(\br)$.
It is indexed by Uglov $l$-partitions. We write $\cB_\br=\{ G(\bmu,\br)\,;\, \bmu\in\Phi_{\br} \}$.
Each element of $\cB_\br$ decomposes on the basis of $l$-partitions. Write $G(\bmu,\br)=\sum_{\bla\vdash_l n} c_{\bla,\bmu}(q)|\bla,\br\rangle.$

Let $\cB_\br^1$ be the specialisation of $\cB_\br$ at $q=1$, that is,
$$\cB_\br^1=\Big\{ \; \sum_{\bla\vdash_l n} c_{\bla,\bmu}(1)|\bla,\br\rangle\,;\, \bmu\in\Phi_{\br} \; \Big\}.$$

Recall that the elements $d_{\bla,M}\in\Z_{>0}, \bla\vdash_l n$, are the decomposition numbers associated to $M\in\Irr(\Hkn)$.
Define $$B(M,\br)=\sum_{\la\vdash_l n} d_{\bla,M} |\bla,\br\rangle. $$

\begin{thm}[{\cite[Theorem 12.5]{Ariki2002}}]\label{arikistheorem} Suppose that $\charac(k)=0$. Then
 $$\cB_\br^1=\Big\{ B(M,\br)\,;\, M\in\Irr(\Hkn),n\in\Z_{\geq0} \Big\}.$$
\end{thm}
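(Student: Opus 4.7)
The plan is to follow the categorification strategy that Ariki used to resolve the LLT conjecture and its generalisation to Ariki-Koike algebras. The key idea is to construct a $\Ue$-module structure on the direct sum $\bigoplus_{n\geq 0} R_0(\Hkn)$ of Grothendieck groups (in a suitable $q$-deformed sense) and then identify it with $V(\br)$ so that simple modules correspond to the specialisation at $q=1$ of the canonical basis.

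First, I would set up the categorification. For each $i\in\llb 0, e-1\rrb$ there is a pair of exact functors $i\text{-}\mathrm{Ind}$ and $i\text{-}\mathrm{Res}$ obtained by decomposing ordinary induction/restriction $\Hkn \hookrightarrow \bH_{k,n+1}$ according to the central character attached to the residue $i$ of the added/removed node. On the level of $q$-deformed Grothendieck groups (where $q$ keeps track of grading shifts on the Jucys–Murphy-type decomposition), these functors satisfy the defining relations of the Chevalley generators $e_i,f_i$ of $\Ue$. The class $[k] \in R_0(\bH_{k,0})$ plays the role of a highest weight vector of weight $\Lambda_\br$, and integrability of the action together with the cyclotomic relation defining $\bH_{k,n}$ forces the submodule generated by $[k]$ to be a quotient of $V(\br)$.

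Second, I would show this map is actually an isomorphism onto $V(\br)$ and that it intertwines with the explicit Fock-space model of Theorem~\ref{jmmo}. This is the step where the characteristic zero hypothesis is essential: in characteristic $0$, formal character computations (or equivalently, a dimension count using the generic degrees/Specht module bases of $\bH_{k,n}$) match those of $\cF_\br$, so the map $[S^{\bla}] \mapsto |\bla,\br\rangle$ embeds the Grothendieck group of Specht modules into $\cF_\br$ and restricts to an isomorphism $\mathrm{span}\{[L(M)] : M \in \Irr(\Hkn)\} \cong V(\br)$ at $q=1$. Hence $B(M,\br) = \sum_{\bla} d_{\bla,M} |\bla,\br\rangle$ lies in $V(\br)$.

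Third, I would identify the basis $\{B(M,\br)\}$ with $\cB_{\br}^{1}$ via Kashiwara's characterisation of the canonical basis. Two properties are needed: (a) a bar-invariance statement, coming from the fact that simple modules are self-dual under the natural duality on $\Hkn$-modules (this translates to bar-invariance of $B(M,\br)$ in the $q$-deformed lift); and (b) a unitriangularity statement relating $B(M,\br)$ to the standard basis $\{|\bla,\br\rangle\}$, coming from the fact that the decomposition matrix is upper unitriangular with respect to an appropriate order, with $1$ on the diagonal entry corresponding to $\bla_M \in \Phi_\br$. Since the canonical basis $G(\bmu,\br)$ is uniquely determined by being bar-invariant and congruent to $|\bmu,\br\rangle$ modulo $q\Z[q]\cdot\cF_\br$, matching these properties forces $B(M,\br) = G(\bmu,\br)|_{q=1}$ for some $\bmu \in \Phi_\br$.

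The main obstacle is the second step: rigorously establishing that the categorical $\Ue$-action coincides with the Fock space action of Theorem~\ref{jmmo}, including recovering the precise $q$-powers $N_i^\prec, N_i^\succ$ from the structure of branching for affine Hecke and Ariki-Koike algebras. This requires Grojnowski–Vazirani style analysis of the socle and head of $i\text{-}\mathrm{Res}$ applied to simple modules, together with the explicit description of central characters. Once this is in place, the identification with the canonical basis follows formally from Kashiwara's uniqueness theorem.
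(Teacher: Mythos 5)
The paper does not prove this statement; it is cited from Ariki's book, so there is no in-text argument to compare against. What follows is therefore a comparison with the argument in the cited reference.

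Your first two steps capture the categorification skeleton of Ariki's argument: $i$-induction/$i$-restriction on the cyclotomic quotients give a $\mathcal{U}(\widehat{\mathfrak{sl}}_e)$-action on $\bigoplus_n K_0(\Hkn\text{-mod})$, the class of the trivial module in degree $0$ is a highest weight vector of weight $\Lambda_\br$, and comparison with the Specht/standard basis identifies the submodule generated by it with $V(\br)$ at $q=1$. However, your third step has a genuine gap, and it is not where you located the main obstacle. You propose to conclude $B(M,\br)=G(\bmu,\br)\vert_{q=1}$ by invoking Kashiwara's characterisation of the canonical basis (bar-invariance plus congruence to $|\bmu,\br\rangle$ modulo $q$). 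That characterisation lives over $\Q(q)$, whereas $B(M,\br)$ is a vector in the classical Fock space: at $q=1$ bar-invariance is vacuous and the congruence condition carries no content. To make your argument run you would need a $q$-graded lift of the module category of $\Hkn$ that is compatible with the $\Ue$-action and under which self-duality of simple modules becomes bar-invariance of their classes. Such a lift exists (via KLR/cyclotomic quiver Hecke algebras), but it is the content of the Brundan--Kleshchev graded categorification theorem, which appeared roughly a decade after Ariki's proof and itself relies on Ariki's theorem; assuming it here would be circular relative to the literature.

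Ariki's actual route around this difficulty is geometric rather than bar-theoretic. After the categorification step, he does not try to characterise the $B(M,\br)$ axiomatically; instead he uses Ginzburg's convolution realisation of $\Ue$ and the attendant geometric parametrisation of simple modules for affine Hecke algebras to identify decomposition numbers with coefficients of simple perverse sheaves, and then invokes Lusztig's geometric construction of the canonical basis (which realises the $G(\bmu,\br)$ as classes of simple perverse sheaves) to match the two bases. The characteristic-zero hypothesis enters exactly here, to make the comparison with $\ell$-adic or complex intersection cohomology possible. So the high-level outline you give is defensible as a modern re-proof modulo heavy later machinery, but it is neither elementary nor the route taken in the cited source, and as written the third step is incomplete without an argument constructing (or citing) the $q$-graded lift.
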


Hence we have the following result concerning the decomposition matrix $D$ of $\Hkn$:

\begin{cor}
Set $C=(c_{\bla,\bmu}(1))_{\bla\vdash_l n,\bmu\in\Phi_{\br}}$. Then $C=D$ up to a reordering of the columns.
\end{cor}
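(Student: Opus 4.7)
The plan is to deduce the corollary directly from Ariki's theorem by comparing coefficients in the basis $\{|\bla,\br\rangle\}$ of the Fock space.

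First I would observe that both sides of the equality in Theorem \ref{arikistheorem} are graded by the rank $n$: indeed, the Fock space $\cF_\br$ decomposes as $\bigoplus_n \cF_\br^{(n)}$ where $\cF_\br^{(n)} = \bigoplus_{\bla \vdash_l n} \Q(q)|\bla,\br\rangle$, and both the canonical basis element $G(\bmu,\br)$ for $\bmu\in\Phi_\br(n)$ and the element $B(M,\br)$ for $M\in\Irr(\Hkn)$ lie in $\cF_\br^{(n)}$ (the former because $G(\bmu,\br)$ has leading term $|\bmu,\br\rangle$, the latter by its very definition). Hence the set equality in Ariki's theorem restricts, at each fixed $n$, to an equality of subsets of $\cF_\br^{(n)}$.

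Next I would exploit linear independence on both sides. The canonical basis is by construction a basis of $V(\br)$, so its specialisation at $q=1$ consists of $|\Phi_\br(n)|$ distinct vectors in $\cF_\br^{(n)}$ (at each rank). On the other hand, the vectors $B(M,\br)$, $M\in\Irr(\Hkn)$, are the columns of the decomposition matrix $D$ read as vectors in $\cF_\br^{(n)}$, and they are linearly independent since $D$ has full column rank $|\Irr(\Hkn)|$. Combining this with the equality of sets from Ariki's theorem gives a bijection
\[
\sigma : \Phi_\br(n) \xrightarrow{\;\sim\;} \Irr(\Hkn), \qquad \bmu \longmapsto M,
\]
characterised by $\sum_{\bla\vdash_l n} c_{\bla,\bmu}(1)|\bla,\br\rangle = \sum_{\bla\vdash_l n} d_{\bla,\sigma(\bmu)}|\bla,\br\rangle$.

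Finally I would compare coefficients on the basis $\{|\bla,\br\rangle\}_{\bla\vdash_l n}$: this last equality forces $c_{\bla,\bmu}(1)=d_{\bla,\sigma(\bmu)}$ for every $\bla\vdash_l n$. In matrix language, the column of $C$ labelled by $\bmu\in\Phi_\br(n)$ coincides with the column of $D$ labelled by $\sigma(\bmu)\in\Irr(\Hkn)$, and $\sigma$ is a bijection, so $C=D$ up to a permutation of columns. There is no real obstacle here; the only subtlety worth flagging is that one must use the linear independence of the $|\bla,\br\rangle$ inside $\cF_\br$ (together with the rank grading) to legitimately pass from the set-theoretic equality of Ariki's theorem to a column-by-column identification.
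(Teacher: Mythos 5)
Your proof is correct and takes essentially the same approach as the paper, which presents the corollary as an immediate consequence of Ariki's theorem without spelling out the deduction; what you have written is the natural fleshing-out of that deduction (grade by rank, use linear independence on both sides to extract a bijection, then compare coefficients). The one step that deserves a word more is the claim that the specialisation of the canonical basis at $q=1$ remains a set of linearly independent vectors: this is not a formal consequence of being a $\Q(q)$-basis, but follows from the unitriangularity of the transition matrix $(c_{\bla,\bmu}(q))$, which has $1$'s on the diagonal.
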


In other words, if $\charac(k)=0$, it is sufficient to compute the canonical basis of the irreducible highest weight $\Ue$-module $V(\br)$ 
in order to recover the decomposition matrix $D$.

\medskip

Uglov, in \cite{Uglov1999}, determined a "canonical basis" $\cB_\br$ of $V(\br)$, generalising the work of Leclerc and Thibon in \cite{LeclercThibon1996}.
Another good reference is the thesis of Yvonne, \cite{Yvonne2005}.
This requires some theory about the \textit{affine Hecke algebra of type $A$} and \textit{$q$-wedge products}.
This approach permits us to establish the existence of a "canonical" basis of the whole Fock space, and also to compute $\cB_\br$.

We introduce one more notation. Let $\bs=(s_1,\dots,s_l)\in\cC$. For $\bm=(m_1,\dots,m_l)\in\Q^l$, we set $\bv=(v_1,\dots,v_l)=(s_1-m_1,\dots,s_l-m_l)$,
and we define $$\sD_\bs=\Big\{\bm\in\Q^l\,|\, i<j \Ra 0<v_j-v_i<e \Big\}.$$

Using Uglov's canonical basis of the Fock space, Geck and Jacon have proved the following result about canonical basic sets:

\begin{thm}[{\cite[Theorem 6.7.2]{GeckJacon2011}}] \label{bs} Suppose that $\charac(k)=0$.
Let $\br\in\Z^l$ and $\bm\in\Q^l$. If $\bm\in\sD_\br$, then $(\Hkn,\br)$ admits a canonical basic set with respect to $\ll_\bm$, namely the set $\Phi_{\br}(n)$.
\end{thm}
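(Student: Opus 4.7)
The overall plan is to combine Ariki's theorem (Theorem \ref{arikistheorem}) with the triangularity properties of Uglov's canonical basis of the Fock space $\cF_\br$. Since $\charac(k)=0$, Ariki's theorem identifies the decomposition matrix $D$, up to reordering columns, with the transition matrix between Uglov's canonical basis $\cB_\br=\{G(\bmu,\br)\,;\,\bmu\in\Phi_\br\}$ and the standard basis $\{|\bla,\br\rangle\,;\,\bla\vdash_l n\}$, evaluated at $q=1$. Thus the three conditions in Definition \ref{basicset} translate into statements about the coefficients $c_{\bla,\bmu}(q)$ appearing in $G(\bmu,\br)=\sum_{\bla}c_{\bla,\bmu}(q)|\bla,\br\rangle$: namely, for each $\bmu\in\Phi_\br(n)$, one needs $c_{\bmu,\bmu}(1)=1$, and $c_{\bla,\bmu}(1)\neq 0$ should force $\bmu\ll_\bm\bla$.

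The first step is to invoke Uglov's construction from \cite{Uglov1999}: the canonical basis element $G(\bmu,\br)$ is characterised by being bar-invariant and congruent to $|\bmu,\br\rangle$ modulo $q\,L$, where $L$ is the natural $\Z[q]$-lattice in $V(\br)$. In particular one has $c_{\bmu,\bmu}(q)=1$ and $c_{\bla,\bmu}(q)\in q\Z[q]$ for $\bla\neq\bmu$, so conditions (1) and (3) of Definition \ref{basicset} reduce to proving a suitable unitriangularity statement. The crucial input is that Uglov's construction, realised via $q$-wedge products depending on the choice of an ordering determined by $\bs\in\cC$, produces a triangularity of the form $c_{\bla,\bmu}(q)\neq 0\Longrightarrow \bla\unrhd_\bs\bmu$ for a combinatorial order $\unrhd_\bs$ attached to $\bs$; this comes from the straightening rules on ordered $q$-wedges which never decrease the $\bs$-charges of the factors.

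The main step, and the technical heart of the plan, is to prove that when $\bm\in\sD_\br$ the Uglov order $\unrhd_\br$ governing $c_{\bla,\bmu}(q)$ refines the $\bm$-symbol order $\ll_\bm$, i.e.\ $\bla\unrhd_\br\bmu\Longrightarrow\bmu\ll_\bm\bla$. The strategy here is combinatorial: translate the $\bm$-symbol order into an order on the reading sequences used by Uglov in the $q$-wedge product. When $\bv=\br-\bm$ satisfies $0<v_j-v_i<e$ for $i<j$, the components of the shifted $\bm$-symbol $\fB_\bm(\bla)$ arrange themselves into a single decreasing sequence $\fb_\bm(\bla)$ whose entries can be matched, up to a global shift independent of $\bla$, with the beta-numbers governing the ordering of the $q$-wedge factors. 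Under this matching, dominance of the sequence $\fb_\bm(\bla)$ over $\fb_\bm(\bmu)$ corresponds precisely to $\bla\unrhd_\br\bmu$; the hypothesis $\bm\in\sD_\br$ is what ensures that the intervals $[v_i,v_i+e)$ do not overlap in the wrong way and that the comparison on symbols is compatible with the intrinsic order used to define $\prec_{(\br,e)}$.

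Once this compatibility is established, the conclusion is routine. For $\bmu\in\Phi_\br(n)$ the element $B(M_\bmu,\br)$ corresponding via Ariki's theorem to the specialisation of $G(\bmu,\br)$ at $q=1$ satisfies $d_{\bmu,M_\bmu}=c_{\bmu,\bmu}(1)=1$, and any other $\bla$ with $d_{\bla,M_\bmu}\neq 0$ satisfies $\bmu\ll_\bm\bla$ with $\bla\neq\bmu$. The map $M_\bmu\mapsto\bmu$ is injective (its image being $\Phi_\br(n)$), and the uniqueness of the minimum $\bla_M$ in each $\sS(M)$ follows from the strict triangularity. Hence $\Phi_\br(n)$ is the canonical basic set for $(\Hkn,\br)$ with respect to $\ll_\bm$. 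The only genuinely subtle point in the argument is the order-compatibility in the previous paragraph; everything else is formal consequence of Ariki's theorem and the defining properties of the canonical basis.
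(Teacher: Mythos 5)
The paper does not prove Theorem \ref{bs}; it is cited verbatim from \cite[Theorem 6.7.2]{GeckJacon2011} and used as a black box. Your reconstruction matches the actual proof strategy of Geck--Jacon: Ariki's theorem reduces the problem to unitriangularity of the transition matrix $(c_{\bla,\bmu}(q))$, the lattice characterisation of the canonical basis handles conditions (1) and (3) of Definition \ref{basicset} on the diagonal, Uglov's wedge-product triangularity supplies a combinatorial order $\unrhd_\br$ with $c_{\bla,\bmu}(q)\neq 0\Rightarrow\bla\unrhd_\br\bmu$, and the hypothesis $\bm\in\sD_\br$ is used exactly to compare that order with $\ll_\bm$. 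This is also consistent with the paper's own gloss on this circle of ideas in the paragraph preceding Proposition \ref{basicsetmasymptotic}, where it explicitly invokes \cite[Proposition 4.11]{Uglov1999} and the refinement $\bmu\llu\bla\Ra\bmu\ll_\bm\bla$.

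One imprecision worth fixing: you first (correctly) state the key compatibility as a one-sided implication $\bla\unrhd_\br\bmu\Longrightarrow\bmu\ll_\bm\bla$, but two sentences later you write that dominance of $\fb_\bm$-sequences ``corresponds precisely to $\bla\unrhd_\br\bmu$'', i.e.\ an equivalence. Only the implication is true and only the implication is needed; $\ll_\bm$ is in general a strictly weaker (coarser) relation than Uglov's order, and demanding equivalence would be both false and pointless. Also note that injectivity of $M\mapsto\bla_M$ is not something to assert from the image being $\Phi_\br(n)$ -- it follows because Ariki's theorem gives a \emph{bijection} $M\leftrightarrow G(\bmu,\br)$ between $\Irr(\Hkn)$ (ranging over all ranks) and $\cB_\br$, together with the fact that distinct canonical basis elements have distinct leading terms $\bmu$ by the lattice condition; spelling this out would tighten the last paragraph.
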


\medskip

In the rest of the paper, we will assume that $\charac(k)=0$, so that Ariki's theorem (and hence Theorem \ref{bs}) holds.

\medskip

We now wish to review the existence or non-existence of a canonical basic set for $(\Hkn,\br)$ with respect to $\ll_\bm$, 
depending on the values of $\bm$, and explicitely describe these sets when they exist.

In the following sections, we will denote by $\sP$ the following subset of $\Q^l$: 

$$\sP=\Big\{m\in\Q^l\,|\,\exists\, i\neq j \st (r_i-m_i) - (r_j-m_j) \in e\Z\Big\}.$$

Precisely, $\sP$ consists in the union of the hyperplanes 
$$\begin{aligned}\cP_{i,j}(\bs) &=\Big\{\bm\in\Q^l\,|\, (s_i-m_i)-(s_j-m_j)=0\Big\}\\ 
                                & =\Big\{\bm\in\Q^l\,|\, v_i-v_j=0\Big\} \quad (\text{where } v_i:=s_i-m_i \;\forall i\in\llb 1, l\rrb)\end{aligned}$$ 
over all $\bs\in\cC_e$ and all $1\leq i < j \leq l$.

Indeed, for $k\in\Z$, we have 

$$\begin{aligned} (r_i-m_i) - (r_j-m_j) =ke & \eq  (r_i-m_i) - (r_j+ke-m_j) =0\\
                                & \eq (s_i-m_i)-(s_j-m_j)=0  \\
                                & \eq v_i-v_j = 0 \end{aligned}$$
with $\bs=(s_1,\dots,s_i,\dots,s_j,\dots,s_l)=(r_1,\dots, r_i,\dots, r_j+ke, \dots,r_l)$.

Clearly, this is not a disjoint union, since $\cP_{i,j}(\bs)=\cP_{i,j}(\tbs)$ whenever $\ts_i=s_i+pe$ and $\ts_j=s_j+pe$ for some $p\in\Z$.
Also, when $l>2$, the hyperplanes $\cP_{i,j}(\bs)$ and $\cP_{i',j'}(\bs)$ always intersect, even when $(i,j)\neq(i',j')$.

\medskip

Now, for $\bm\in\Q^l$, we can always find a multicharge $\bs\in\cC_e$ which is "close" to $\bm$ in the following sense.
In $\Q^l$, consider the closed balls $B_{e/2}(\bs)$, with respect to the infinity norm, of radius $e/2$ and centered at $\bs$, for $\bs\in\cC_e$.
By definition of $\cC_e$, it is clear that 
$$ \bigcup_{\bs\in\cC_e} B_{e/2}(\bs) = \Q^l, \mand \bigcap_{\bs\in\cC_e} B_{e/2}(\bs) = \bigcup_{\bs\in\cC_e} \partial B_{e/2}(\bs).$$
In other terms, these balls cover $\Q^l$, and only their boundaries intersect.
Hence, there is a particular $\bs=(s_1,\dots,s_l)\in\cC_e$ such that $\bm\in B_{e/2}(\bs)$, and this multicharge is unique if $\bm$ is not on the boundary of the ball.
If it belongs to the boundary, then this means that there exists $i\in\llb 1, l \rrb$ such that $|v_i]=|s_i-m_i|=e/2$.
In this case, we make $\bs$ unique by setting $|v_i|=e/2$.

\begin{defi}\label{defadaptedmc}
The element $\bs$ thus obtained is called the \textit{$\bm$-adapted multicharge}.
\end{defi}

The following easy lemma will be useful in the last two sections.

\begin{lem}\label{lemminP} Let $\bm\in\Q^l$.
Suppose that $\bm\in\cP_{i,j}(\bs')$ for some $i<j$ and some $\bs'\in\cC_e$.
Then $\cP_{i,j}(\bs)=\cP_{i,j}(\bs')$, where $\bs$ is the $\bm$-adapted multicharge.
\end{lem}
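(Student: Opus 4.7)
The plan is to reduce the equality of hyperplanes $\cP_{i,j}(\bs) = \cP_{i,j}(\bs')$ to the scalar identity $s_i - s_j = s'_i - s'_j$, and then to pin down this integer difference by combining a congruence (coming from $\bs,\bs'\in\cC_e$) with the size constraint $|s_k - m_k|\leq e/2$ imposed by the $\bm$-adapted condition.

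First I would unpack the definitions: for any $\bt\in\cC_e$, $\cP_{i,j}(\bt)=\{\bm\in\Q^l\mid m_i - m_j = t_i - t_j\}$, so the desired equality of hyperplanes amounts to the single scalar identity $s_i-s_j = s'_i-s'_j$. Setting $v_k := s_k-m_k$ and using that the hypothesis $\bm\in\cP_{i,j}(\bs')$ reads $s'_i-s'_j = m_i-m_j$, what remains to prove can be rewritten as
$$v_i - v_j \;=\; (s_i-s_j)-(m_i-m_j) \;=\; (s_i-s_j)-(s'_i-s'_j) \;=\; 0.$$

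Second, for the congruence step: since $\bs,\bs'\in\cC_e$, each coordinate satisfies $s_k\equiv s'_k\equiv r_k \pmod{e}$, so the integer $(s_i-s_j)-(s'_i-s'_j)$ lies in $e\Z$; in other words, $v_i - v_j$ is a multiple of $e$. On the other hand, by Definition~\ref{defadaptedmc} each $v_k$ lies in the half-open interval $(-e/2,\,e/2]$, the convention $|v_k|=e/2\Rightarrow v_k=e/2$ being precisely what removes the ambiguity between two adjacent balls $B_{e/2}(\bs)$. Therefore $v_i - v_j\in (-e,e)$, and the only multiple of $e$ in this open interval is $0$, which forces $v_i - v_j = 0$ and concludes the argument.

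The main (and really only) subtle point is the careful use of the half-open normalisation $(-e/2,\,e/2]$ rather than the closed bound $|v_k|\leq e/2$: the latter would a priori permit the extremal configuration $v_i=e/2$, $v_j=-e/2$ (giving $v_i - v_j = e\neq 0$), in which case the lemma would fail. Once the uniqueness convention from Definition~\ref{defadaptedmc} is correctly invoked, the proof collapses to the one-line congruence-plus-bound computation above.
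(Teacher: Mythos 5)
Your proof is correct and takes essentially the same route as the paper's: both combine the congruence $s_k\equiv s'_k\pmod{e}$ (from $\bs,\bs'\in\cC_e$) with the bound $|s_k-m_k|\leq e/2$ (from $\bm$-adaptedness). The paper phrases the final step as a uniqueness argument --- $s_j$ and $s'_j+pe$ both satisfy the defining constraint on the $j$-th coordinate, hence coincide --- whereas you observe directly that $v_i-v_j$ is a multiple of $e$ lying in an interval of length $2e$; this is a marginally crisper way to finish but the same underlying computation. Your remark about the boundary case is well taken: the closed bound $|v_k|\leq e/2$ alone does not exclude $v_i-v_j=\pm e$, so the half-open normalisation is genuinely needed. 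Note, though, that the paper's stated tie-breaking convention (``we make $\bs$ unique by setting $|v_i|=e/2$'') is itself ambiguous as written --- it does not say which sign is chosen --- so both your proof and the paper's rely on the implicit understanding that the boundary choice is made consistently for all coordinates, precisely as you formalise with the convention $v_k\in(-e/2,e/2]$.
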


\begin{proof}
The multicharge $\bs$ verifies in particular $0\leq |s_i-m_i|\leq e/2$ and $0\leq |s_j-m_j|\leq e/2$.
Also, because $\bs',\bs\in\cC_e$, we can write $s_i=s'_i+pe$ for some $p\in\Z$.
This gives $$0\leq |s'_i-m_i +pe|\leq e/2,$$ i.e. $$0\leq |s'_j-m_j +pe|\leq e/2 \text{ since } \bm\in\cP_{i,j}(\bs').$$
Hence we have $s_j=s'_j+pe$, which implies that $\cP_{i,j}(\bs)=\cP_{i,j}(\bs')$.
\end{proof}

As a consequence, if $\bm\in\sP$, it writes a priori $\ds\bm\in\bigcap_{\substack{(i,j)\in J \\ \bs'\in S}} \cP_{i,j}(\bs')$
for some index set $J$ and some $S\subset \cC_e$,
but, a posteriori, we can simply write $\ds\bm\in\bigcap_{(i,j)\in J} \cP_{i,j}(\bs)$, where $\bs$ is the $\bm$-adapted multicharge.

\section{Canonical basic sets for regular $\bm$}\label{mregular}

If $\bm \in \Q^l\backslash \sP$, we say that $\bm$ is \textit{regular}. In this section, we show that any regular $\bm$ defines an order $\ll_\bm$ 
with respect to which $\Hkn$ admits a canonical basic set. 
We use the fact that, for $\bs\in\cC_e$, if $\sB$ is the canonical basic set for $(\Hkn,\bs)$ with respect to $\ll_\bm$, 
then $\sB$ is the canonical basic set for $(\Hkn,\br)$ with respect to $\ll_\bm$ (see Remark \ref{importantremark}).
We first study the case $l=2$, and then the general case.

\subsection{$l=2$}

Here we have $\br=(r_1,r_2)\in\Z^2$, and $\sP$ is just a collection of parallel lines, namely the lines passing through $(s_1,s_2)\in\cC_e$ with slope 1.
The set $\sD_\bs$ is the domain strictly between the lines passing through $(s_1,s_2)$ and $(s_1,s_2-e)$, see Figure \ref{hyp1}.

\medskip

\textbf{Notation:} For $\bs=(s_1,s_2)\in\cC$, we denote $\tbs=(s_1,s_2+e)$.

\begin{figure}
\begin{center}
\scalebox{0.5}{\input{hyperplans10.pstex_t}}
\caption{The set $\sP$ and the domains $\sD_\bs$ in level $2$.}
\label{hyp1}
\end{center}
\end{figure}

\begin{prop}\label{basicsetmregularlevel2}

Let $\bm=(m_1,m_2)\in \Q^2\backslash\sP$. 
Then $(\Hkn,\br)$ admits a canonical basic set with respect to $\ll_\bm$, 
namely either $\Phi_{\bs}(n)$ or $\Phi_{\tbs}(n)$, where $\bs\in\cC_e$ is explicitely determined.
 
\end{prop}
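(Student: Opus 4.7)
The plan is to reduce to Theorem \ref{bs} by passing to an appropriate multicharge in $\cC_e(\br)$, and to transfer the resulting basic set back to $(\Hkn,\br)$ via Remark \ref{importantremark}. Concretely, let $\bs=(s_1,s_2)\in\cC_e$ be the $\bm$-adapted multicharge from Definition \ref{defadaptedmc}, and set $v_i=s_i-m_i$ for $i=1,2$. By construction $|v_1|,|v_2|\leq e/2$, so $|v_2-v_1|\leq e$. The hypothesis $\bm\notin\sP$ rules out the extremal values: $v_1=v_2$ would mean $\bm\in\cP_{1,2}(\bs)$, while $v_2-v_1=\pm e$ would mean $\bm\in\cP_{1,2}((s_1,s_2\mp e))$; in each case the witnessing multicharge still lies in $\cC_e(\br)$, so the offending hyperplane belongs to $\sP$. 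Hence $0<|v_2-v_1|<e$.

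I would then split according to the sign of $v_2-v_1$. If $v_2-v_1>0$, then $\bm\in\sD_\bs$ by definition, and Theorem \ref{bs} applied with $\bs$ in place of $\br$ gives $\Phi_\bs(n)$ as a canonical basic set for $(\Hkn,\bs)$ with respect to $\ll_\bm$. Since $\bs\in\cC_e(\br)$, Remark \ref{importantremark} then transports this conclusion to $(\Hkn,\br)$. If $v_2-v_1<0$, I would replace $\bs$ by $\tbs=(s_1,s_2+e)\in\cC_e(\br)$: the associated $v$-values are $v_1$ and $v_2+e$, whose difference $v_2-v_1+e$ lies in $(0,e)$ by the case hypothesis and the bounds above. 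Therefore $\bm\in\sD_{\tbs}$, and the same argument yields $\Phi_{\tbs}(n)$ as the canonical basic set for $(\Hkn,\br)$.

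Geometrically, this amounts to the observation that the closed ball $B_{e/2}(\bs)$ is cut by the line $\cP_{1,2}(\bs)$ into two open triangles, one contained in $\sD_\bs$ and the other in $\sD_{\tbs}$, and the hypothesis $\bm\notin\sP$ places $\bm$ in exactly one of them (cf.\ Figure \ref{hyp1}). I do not expect a real obstacle; the only delicate point is that the shift from $\bs$ to $\tbs$ must remain inside $\cC_e(\br)$, which it does because only the second coordinate changes, and by a multiple of $e$. All the representation-theoretic content is then absorbed into the invocation of Theorem \ref{bs}.
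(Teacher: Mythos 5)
Your proof follows essentially the same route as the paper: pass to the $\bm$-adapted multicharge $\bs$, split according to the sign of $v_2-v_1$, then apply Theorem \ref{bs} together with Remark \ref{importantremark} (using $\bs$ in the first case, $\tbs=(s_1,s_2+e)$ in the second). One small point in your favour: the paper's own proof only spells out why $\bm\notin\sP$ rules out $v_1=v_2$ and then asserts $0<|v_1-v_2|<e$, whereas you also explicitly exclude the boundary case $|v_1-v_2|=e$ (which would put $\bm$ on $\cP_{1,2}((s_1,s_2\mp e))$, a hyperplane still indexed by a multicharge in $\cC_e$); making this explicit is a genuine, if minor, tightening of the argument.
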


\proof The idea is to show that any such $\bm$ belongs to a certain $\sD_{\hbs}$, for $\hbs\in\cC_e$.

Consider the $\bm$-adapted multicharge $\bs$ (cf. Definition \ref{defadaptedmc}).
It verifies, in particular,  $0\leq |s_i-m_i|\leq \frac{e}{2} \, , i=1,2.$.
We set, as usual, $v_i=s_i-m_i$, so that we have $0\leq |v_1-v_2|\leq e$.
The fact that $\bm$ is not located on a hyperplane of $\sP$ 
(and that $\bs$ is obtained from $\br$ after translation of each coordinate by an element of $e\Z$)
ensures that on can never have $v_1=v_2$.
Thus $0< |v_1-v_2|< e$.
Now, 
\begin{itemize}
 \item If $0< v_2-v_1< e$, then $\bm\in\sD_{\bs}$. 
By Theorem \ref{bs}, the set $\Phi_{\bs}(n)$ is the canonical basic set for the algebra $\Hkn^{(e,\bs)}$ with respect to the order $\ll_\bm$.
Therefore, by Remark \ref{importantremark}, $\ug(n)$ is the canonical basic set for $(\Hkn,\br)$ with respect to $\ll_\bm$.
\item If $0<v_1-v_2< e$, then $0<(v_2+e)-v_1<e$. 
Hence $\bm\in\sD_{\tbs}$ (where we recall that $\tbs=(s_1,s_2+e)$), 
so that by Theorem \ref{bs}, the set $\Phi_{\tbs}(n)$ is the canonical basic set for $(\Hkn,\tbs)$ with respect to the order $\ll_\bm$.
Since $\tbs\in\cC_e$, using Remark \ref{importantremark}, $\Phi_{\tbs}(n)$ is the canonical basic set for $(\Hkn,\br)$ with respect to $\ll_\bm$.
\end{itemize}
\endproof

\medskip

\begin{rem}\label{remarkmregularlevel2}
In level 2, the domains $\sD_{\bs},\; \bs\in\cC_e$, actually tile $\Q^2\backslash\sP$. 
In higher level this does not hold anymore, and we need to find other canonical basic sets than Uglov multipartitions.
\end{rem}

\medskip

Note that we can sometimes find different canonical basic sets for $(\Hkn,\br)$ and $(\Hkn,\bs)$ with respect to the same order $\ll_\bm$ if $\bs\in\cC\backslash\cC_e$,
as mentioned in Remark \ref{importantremark}.
This is what the following example shows.

\begin{exa}\label{exampledifferentbasicsets}
Let $l=2$, $e=4$, $\br=(1,0)$ and $\bs=\br^\si=(0,1)$ (where $\si=(12)$). Then $\bs\notin\cC_e$. Take $\bm=(1,-1)$.

Then $\bm\in\sD_\br$ since $\br-\bm=(0,1)$. Hence $\Phi_\br(n)$ is the canonical basic set for $(\Hkn,\br)$.
Besides, $\bm\in\sD_\bs$ since $\bs-\bm=(-1,2)$. Hence $\ug(n)$ is the canonical basic set for $(\Hkn,\bs)$,
but $\ug(n)\neq \Phi_\br(n)$ for $n\geq 2$ (which one can easily check).

\end{exa}

\medskip

\subsection{$l>2$}

Throughout this paper, we will use the following notations.
For $\boldsymbol{\al}=(\al_1,\dots \al_l)\in\Q^l$ and $\si\in\fS_l$, we denote $\boldsymbol{\alpha}^\si=(\al_{\si(1)},\dots \al_{\si(l)})$.
Similarly, for $\bla=(\la^1,...,\la^l)\vdash_l n$, we write $\bla^\si=(\la^{\si(1)},\dots,\la^{\si(l)})$.

Let $\bm=(m_1, ..., m_l)$ be an element of $\Q^l\backslash\sP$.

\begin{prop}\label{uglovpermute}
 Let $\bs\in\cC$ and $\si\in\fS_l$. If $\Phi_{\bs}(n)$ is the canonical basic set for $(\Hkn,\bs)$ with respect to $\ll_\bm$, 
then the set $$\si(\ug(n)):=\{\bla^\si\,;\, \bla\in\Phi_{\bs}(n)\}$$ of \textit{$\si$-twisted Uglov $l$-partitions}
is the canonical basic set for $(\Hkn,\bs^\si)$ with respect to $\ll_{\bm^\si}$.
\end{prop}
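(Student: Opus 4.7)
My plan is to transport the canonical basic set property from $(\Hkn,\bs)$ to $(\Hkn,\bs^\si)$ by means of a simultaneous $\si$-twist on the three layers of data: multicharge, weight sequence, and multipartition labels. The argument rests on two ingredients, one representation-theoretic and one purely combinatorial.

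On the representation-theoretic side, since $\si\in\fS_l\subset\widehat{\fS}_l$, we have $\bs^\si=\si(\bs)\in\cC$, and therefore $\Hkn^{(e,\bs)}=\Hkn^{(e,\bs^\si)}=\Hkn$. The discussion on page~\pageref{important} then furnishes the permutation identity
$$d^{(\bs)}_{\bla,M}=d^{(\bs^\si)}_{\bla^\si,M}\quad\forall\bla\vdash_l n,\;\forall M\in\Irr(\Hkn),$$
so that $\bla\mapsto\bla^\si$ restricts to a bijection $\sS_\bs(M)\to\sS_{\bs^\si}(M)$ preserving decomposition numbers.

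On the combinatorial side, the shifted $\bm^\si$-symbol of $\bla^\si$ is obtained from $\fB_\bm(\bla)$ by a permutation of its $l$ rows: the $j$-th row of $\fB_{\bm^\si}(\bla^\si)$ depends only on the pair $((\bla^\si)^j,(\bm^\si)_j)=(\la^{\si(j)},m_{\si(j)})$, and therefore coincides with the $\si(j)$-th row of $\fB_\bm(\bla)$; note also that the admissibility condition $p+\lfloor m_j\rfloor\geq 1+h^j$ is preserved by the simultaneous twist, so we may use a common $p$. Since $\fb_\bm(\bla)$ records only the decreasing rearrangement of the multiset of all entries of $\fB_\bm(\bla)$, and this multiset is manifestly invariant under row-permutation,
$$\fb_{\bm^\si}(\bla^\si)=\fb_\bm(\bla),$$
which yields the desired equivariance $\bla\ll_\bm\bmu\iff\bla^\si\ll_{\bm^\si}\bmu^\si$.

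Combining these, the three conditions of Definition~\ref{basicset} are checked directly. For $M\in\Irr(\Hkn)$, the hypothesis supplies a unique $\bla_M\in\Phi_\bs(n)\cap\sS_\bs(M)$ with $d^{(\bs)}_{\bla_M,M}=1$ and $\bla_M$ minimal in $\sS_\bs(M)$ for $\ll_\bm$. Setting $\bmu_M:=\bla_M^\si$, the first ingredient gives $\bmu_M\in\si(\Phi_\bs(n))\cap\sS_{\bs^\si}(M)$ with $d^{(\bs^\si)}_{\bmu_M,M}=1$; and for any $\bmu'=\bnu^\si\in\sS_{\bs^\si}(M)$, the equivariance transports $\bla_M\ll_\bm\bnu$ into $\bmu_M\ll_{\bm^\si}\bmu'$, establishing minimality. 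Injectivity of $M\mapsto\bmu_M$ is inherited from that of $M\mapsto\bla_M$ since $\bla\mapsto\bla^\si$ is a bijection on $\Pi_l(n)$. The main point of care, rather than any substantive difficulty, is the consistent tracking of the $\si$-twist direction across the three layers of the argument.
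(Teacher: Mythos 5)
Your proof is correct, and it takes a genuinely different route from the paper. The paper's proof introduces a twisted Fock space $\cF_{\bs^\si}^\si$ whose $\Ue$-action is conjugated by the vector-space isomorphism $\si:\cF_\bs\to\cF_{\bs^\si}$, transports the canonical basis of $V(\bs)$ through $\si$, and then invokes Ariki's theorem for this twisted realisation before concluding via the same symbol-permutation equivariance you use. Your argument bypasses the Fock space entirely: you simply combine the row-reparametrisation identity for decomposition numbers (the content of the discussion around page~\pageref{important}, once one notes the Proposition only involves a permutation within $\fS_l$ so the relevant hypotheses apply with $\br$ replaced by $\bs$) with the observation that $\fB_{\bm^\si}(\bla^\si)$ is a row-permutation of $\fB_\bm(\bla)$, hence $\fb_{\bm^\si}(\bla^\si)=\fb_\bm(\bla)$, and then transport the three conditions of Definition~\ref{basicset} directly. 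This is shorter, avoids Ariki's theorem and the characteristic-zero hypothesis at this stage (the paper's proof uses it, yours does not need it beyond what the hypothesis on $\Phi_\bs(n)$ already presupposes), and makes the logical dependence on earlier material explicit. What the paper's heavier construction buys in exchange is the identification of $\si(\Phi_\bs(n))$ as the vertex set of an honest (twisted) crystal, which fits the Fock-space theme of the surrounding sections but is not needed for the statement at hand. One caveat that applies equally to your proposal and to the paper: the direction of the twist in the identity $d^{(\bs)}_{\bla,M}=d^{(\bs^\si)}_{\bla^\si,M}$ (versus the $\si^{-1}$-twisted version) depends on the Specht-module/$V_j$ pairing convention, and for a non-involutive $\si$ one should check this against the Ariki--Koike construction; since you use precisely the convention the paper adopts on page~\pageref{important}, your argument is internally consistent with the paper and any fix would be global.
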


\proof
In order to prove this result, we need to define a \textit{twisted Fock space} $\cF_{\bs^\si}^\si$, which, as a vector space, is the Fock space $\cF_{\bs^\si}$, 
but has a $\si$-twisted $\Ue$-action.

\medskip

Recall that the action of $\Ue$ on the Fock space $\cF_\bs$ is derived from an order on the $i$-nodes of $l$-partitions. 
We define a twisted order on the removable and addable $i$-nodes of a multipartition in the following way : 
let $\ga=(a,b,c)$ and $\ga'=(a',b',c')$ be two removable or addable $i$-nodes of $\bla \vdash_l n$. 
We write $$\ga\prec_{(\bs^\si,e)}^\si \ga' \text{ if }   \left\{      \begin{array}{l}      b-a+s_{\si(c)}<b'-a'+s_{\si(c')} \text{ or }
\\  b-a+s_{\si(c)}=b'-a'+s_{\si(c')} \mand \si(c)>\si(c').     \end{array}   \right.$$
Now, if $\ga=(a,b,c)$ is a removable (resp. addable) $i$-node of $\bla=(\la^1,...,\la^l)$, 
then $\ga^\si:=(a,b,\si^{-1}(c))$ is a removable (resp. addable) $i$-node of $\bla^\si:=(\la^{\si(1)},...,\la^{\si(l)})$, so that we have
$$\ga^\si\prec_{(\bs^\si,e)}^\si \ga'^\si \eq \ga\prec_{(\bs,e)} \ga'.$$

\medskip

This order enables us to define the numbers $N_i^{\prec^\si}(\bla,\bmu)$ and $N_i^{\succ^\si}(\bla,\bmu)$. 
Let $\bla\vdash_l n$ and $\bmu\vdash_l n+1$ such that $[\bmu]=[\bla]\cup\{\ga\}$ where $\ga$ is an $i$-node. 
Then set  $$\begin{aligned} N_i^{\prec^\si}(\bla,\bmu)= &
\sharp\{\text{addable $i$-nodes $\ga'$ of $\bla$ such that $\ga'\prec_{(\bs^\si,e)}^\si \ga$}\}-\\ &
\sharp\{\text{removable $i$-nodes $\ga'$ of $\bmu$ such that $\ga'\prec_{(\bs^\si,e)}^\si \ga$}\}                                   
                                     \end{aligned}$$

and $$\begin{aligned} N_i^{\succ^\si}(\bla,\bmu)= &
\sharp\{\text{addable $i$-nodes $\ga'$ of $\bla$ such that $\ga'\succ_{(\bs^\si,e)}^\si \ga$}\}-\\ &
\sharp\{\text{removable $i$-nodes $\ga'$ of $\bmu$ such that $\ga'\succ_{(\bs^\si,e)}^\si \ga$}\}                                   
                                     \end{aligned}$$

We abuse the notation by denoting $\si$ the isomorphism of vector spaces
$$\si : \begin{array}{ccc} \cF_\bs & \lra & \cF_{\bs^\si} \\  \left. |\bla,\bs\right\rangle & \longmapsto & \left. |\bla^\si,\bs^\si\right\rangle \end{array}$$
Now we want do define a twisted action of $\Ue$ on $\cF_{\bs^\si}$. 

The action of $e_i$ and $f_i$, 
denoted by $e_i^\si.\left. |\bla^\si, \bs^\si\right\rangle$ and $f_i^\si.\left. |\bla^\si, \bs^\si\right\rangle$, are defined as follows:
 
$$\ds e_i^\si.\left. |\bla^\si, \bs^\si\right\rangle = 
\sum_{\fr_e([\bla^\si]\backslash[\bmu^\si])=i} q^{-N_i^{\prec^\si}(\bmu^\si,\bla^\si)}\left.|\bmu^\si,\bs^\si\right\rangle.$$ 
Then we have $$\begin{aligned}
e_i^\si.\left. |\bla^\si, \bs^\si\right\rangle & = \sum_{\fr_e([\bla]\backslash[\bmu])=i} q^{-N_i^{\prec}(\bmu,\bla)} \si(\left.|\bmu,\bs\right\rangle) \\
&= \si(\sum_{\fr_e([\bla]\backslash[\bmu])=i} q^{-N_i^{\prec}(\bmu,\bla)}\left.|\bmu,\bs\right\rangle)
  \end{aligned}$$
that is, $e_i^\si$ acts as $\si e_i\si^{-1}$.

\medskip

Similarly, if we set 
$$\ds f_i^\si.\left. |\bla^\si, \bs^\si\right\rangle = \sum_{\fr_e([\bmu^\si]\backslash[\bla^\si])=i} q^{-N_i^{\succ^\si}(\bla^\si,\bmu^\si)}\left.|\bmu^\si,\bs^\si\right\rangle,$$
we have 
$$\begin{aligned}
f_i^\si.\left. |\bla^\si, \bs^\si\right\rangle & = \sum_{\fr_e([\bmu]\backslash[\bla])=i} q^{-N_i^{\succ}(\bla,\bmu)} \si(\left.|\bmu,\bs\right\rangle) \\
&= \si(\sum_{\fr_e([\bmu]\backslash[\bla])=i} q^{-N_i^{\prec}(\bla,\bmu)}\left.|\bmu,\bs\right\rangle)
  \end{aligned}$$
that is, $f_i^\si$ acts as $\si f_i\si^{-1}$.

Hence by Theorem \ref{jmmo}, these new formulas, combined with the formulas 
$$t_i .\left. |\bla^\si, \bs^\si\right\rangle=q^{N_i(\bla)}|\bla^\si,\bs^\si\rangle$$
and $$\fd.\left. |\bla^\si, \bs^\si\right\rangle=-(\De(\bs)+N_\fd(\bla))|\bla^\si,\bs^\si\rangle$$
endow  $\cF_{\bs^\si}$ with the structure of an integrable $\Ue$-module, that we denote by $\cF_{\bs^\si}^\si$.

We continue the construction as in the non-twisted case. 
Denote by $V(\bs^\si)^\si$ the submodule of $\cF_{\bs^\si}^\si$ generated by the empty $l$-partition $\left. |\bemptyset, \bs^\si\right\rangle $. 
This is an irreducible highest weight $\Ue$-module for this twisted action, and the crystal basis of $V(\bs)$ is mapped to the one of $V(\bs^\si)^\si$
by the isomorphism $\si$. In particular the vertices of the crystal graph of $V(\bs^\si)^\si$ are the \textit{$\si$-twisted Uglov $l$-partitions}:
$$\si(\ug(n)):=\{(\la^{\si(1)},...,\la^{\si(l)})\,;\, (\la^1,...,\la^l)\in\Phi_{\bs}(n)\}.$$

It is an indexing set for the global basis of $V(\bs^\si)^\si$. Now this basis is also obtained from the global basis of $V(\bs)$ by applying $\si$.
That is, if we denote by $G^\si(\bla,\bs^\si)$ ($\bla\in\si(\ug(n))$) the elements of the canonical basis of $V(\bs^\si)^\si$, we have:
\begin{equation}\label{baseglobale} \si(G(\bla,\bs))=G^\si(\bla^\si,\bs^\si).\end{equation}

Write $\ds G^\si(\bla,\bs^\si)=\sum_{\bmu^\si\vdash_l n}d_{\bmu^\si,\bla^\si}(q)\left. |\bmu^\si,\bs^\si\right\rangle$ 
the decomposition of $G^\si(\bla,\bs^\si)$ on the basis of all $l$-partitions. By (\ref{baseglobale}), we have
$$\begin{aligned} 
&\si(\sum_{\bmu\vdash_l n}d_{\bmu,\bla}(q)\left. |\bmu,\bs\right\rangle ) & = \sum_{\bmu^\si\vdash_l n}d_{\bmu^\si,\bla^\si}(q)\left. |\bmu^\si,\bs^\si\right\rangle \\ 
\text{i.e. } & \sum_{\bmu\vdash_l n}d_{\bmu,\bla}(q)\left. |\bmu^\si,\bs^\si\right\rangle & = \sum_{\bmu\vdash_l n}d_{\bmu^\si,\bla^\si}(q)\left. |\bmu^\si,\bs^\si\right\rangle. 
\end{aligned}$$

Hence $\forall \bla\in\Phi_{\bs}(n), \,\forall\bmu\vdash_l n$, we have \begin{equation}\label{coeffdec} d_{\bmu^\si,\bla^\si}(q)=d_{\bmu,\bla}(q).\end{equation} 
In particular, this is true at $q=1$.

By Ariki's theorem (Theorem \ref{arikistheorem}), which holds for any realisation of the highest weight $\Ue$-module $V(\br)$,
the matrix $(d_{\bmu,\bla}(1))_{\begin{subarray}{l}\bmu\vdash_l n , \bla\in\Phi_{\bs}(n)\end{subarray}}$ is the decomposition matrix $D$ of $\Hkn$.
Hence by (\ref{coeffdec}), one can also parametrise the irreducible modules of $\Hkn$ by the elements of $\si(\ug(n))$
and recover the same matrix by labelling the $i$-th column by $\bla_i^\si\in\si(\ug(n))$ and the $j$-th line by $\bmu_j^\si\vdash_l n$.

Moreover, the fact that $\Phi_{\bs}(n)$ is the canonical basic set for $(\Hkn,\bs)$ with respect to $\ll_\bm$ means that $D$ is upper unitriangular with respect to $\ll_\bm$.
Since we have $$\bla_i\ll_\bm\bla_j\eq\bla_i^\si\ll_{\bm^\si}\bla_j^\si,$$
the matrix $D$ (with columns indexed by $\si(\ug(n))$ ) is upper unitriangular with respect to $\ll_{\bm^\si}$, 
i.e. $\si(\ug(n))$ is the canonical basic set for $(\Hkn,\bs^\si)$.

\endproof

We are now ready to prove the following general result.

\begin{prop}\label{basicsetmregular} 
Let $\bm=(m_1,...,m_l)\in\Q^l\backslash\sP$. Then $(\Hkn,\br)$ admits a canonical basic set with respect to $\ll_\bm$, namely $\si(\Phi_{\bs^{\si^{-1}}}(n))$, 
where $\bs$ is the $\bm$-adapted multicharge and $\si\in\fS_l$ is explicitely determined. 
We then say that $\si$ is the $\bm$-adapted permutation.
\end{prop}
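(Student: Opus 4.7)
The plan is to reduce to Theorem \ref{bs} by first permuting the components of $\bs-\bm$ to put them in increasing order, and then transporting the resulting canonical basic set back via Proposition \ref{uglovpermute} and Remark \ref{importantremark}. To set up, let $\bs$ be the $\bm$-adapted multicharge (Definition \ref{defadaptedmc}) and set $v_i=s_i-m_i$ for $i\in\llb 1,l\rrb$. By construction $|v_i|\leq e/2$, whence $|v_i-v_j|\leq e$ for all $i,j$. The regularity hypothesis $\bm\notin\sP$, combined with Lemma \ref{lemminP}, yields $\bm\notin\cP_{i,j}(\bs)$ for every $i\neq j$, so the $v_i$ are pairwise distinct; in particular $|v_i-v_j|<e$ strictly.

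Since the $v_i$ are pairwise distinct, there is a unique permutation $\si\in\fS_l$ such that $v_{\si^{-1}(1)}<v_{\si^{-1}(2)}<\cdots<v_{\si^{-1}(l)}$; equivalently, $\si(i)$ records the rank of $v_i$ in the sorted order. This $\si$ will be declared the $\bm$-adapted permutation. I would then set $\bs_0:=\bs^{\si^{-1}}$ and $\bm_0:=\bm^{\si^{-1}}$, so that $(\bs_0)_i-(\bm_0)_i=s_{\si^{-1}(i)}-m_{\si^{-1}(i)}=v_{\si^{-1}(i)}$ is strictly increasing in $i$ with all consecutive gaps strictly between $0$ and $e$. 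This is precisely the condition $\bm_0\in\sD_{\bs_0}$, and Theorem \ref{bs} therefore gives that $\Phi_{\bs^{\si^{-1}}}(n)$ is the canonical basic set for $(\Hkn,\bs_0)$ with respect to $\ll_{\bm_0}$.

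It then remains to transport this back to $(\Hkn,\br)$. Applying Proposition \ref{uglovpermute} to the pair $(\bs_0,\bm_0)$ with the permutation $\si$, and using the identities $\bs_0^\si=\bs$ and $\bm_0^\si=\bm$, one obtains that $\si(\Phi_{\bs^{\si^{-1}}}(n))$ is the canonical basic set for $(\Hkn,\bs)$ with respect to $\ll_\bm$. Since $\bs\in\cC_e$, Remark \ref{importantremark} then guarantees that the same set is the canonical basic set for $(\Hkn,\br)$ with respect to $\ll_\bm$, which is the statement. The only real conceptual step is the sorting argument of the second paragraph, where regularity $\bm\notin\sP$ is used to ensure strict inequalities so that Theorem \ref{bs} applies after permuting; the remainder is bookkeeping on multicharges and permutations.
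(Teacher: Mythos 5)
Your proof follows essentially the same path as the paper: take the $\bm$-adapted multicharge $\bs$, sort the differences $v_i=s_i-m_i$ by a permutation so that Theorem~\ref{bs} applies, then transport the resulting Uglov basic set back via Proposition~\ref{uglovpermute} and Remark~\ref{importantremark}. The permutation you define (with $v_{\si^{-1}(1)}<\cdots<v_{\si^{-1}(l)}$, so that $\si^{-1}$ plays the role of the paper's $\tau$) and the final bookkeeping agree with the paper.

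There is, however, one step whose justification as written does not hold. You argue that $\bm\notin\cP_{i,j}(\bs)$ for all $i\neq j$ gives the $v_i$ pairwise distinct, ``in particular $|v_i-v_j|<e$ strictly.'' Pairwise distinctness together with $|v_i-v_j|\leq e$ only excludes $|v_i-v_j|=0$, not $|v_i-v_j|=e$; the latter is possible when $v_i=e/2$ and $v_j=-e/2$, which can occur on the boundary of $B_{e/2}(\bs)$. The correct argument uses the \emph{full} regularity hypothesis: if $v_i-v_j=\pm e$, then $(s_i-m_i)-(s_j-m_j)\in e\Z$, so $\bm\in\cP_{i,j}(\bs')$ for a suitable $\bs'\in\cC_e$ (shift $s_j$ by $\pm e$), contradicting $\bm\notin\sP$. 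Equivalently, as the paper phrases it, regularity forces at most one index $i$ to have $|v_i|=e/2$, which is what prevents $|v_i-v_j|$ from attaining the value $e$. Once this is repaired the rest of your argument goes through; note also that the appeal to Lemma~\ref{lemminP} for the conclusion $\bm\notin\cP_{i,j}(\bs)$ is unnecessary (that follows immediately from $\bm\notin\sP$ and $\bs\in\cC_e$), whereas what is actually needed here is $\bm\notin\cP_{i,j}(\bs')$ for every $\bs'\in\cC_e$, which is again just the definition of $\sP$.
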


\proof
As in the level 2 case, consider the $\bm$-adapted multicharge $\bs$, and set $v_i=s_i-m_i$, for all $i\in\llb 1, l \rrb$.
Since $\bm\notin\sP$, at most one coordinate $v_i$ can verify $|v_i|=\frac{e}{2}$. Moreover one can never have $|v_i-v_j|=0$ for $i\neq j$.
Hence, we have $i\neq j \Ra 0<|v_i-v_j|<e$ for all $i$, which implies that 
\begin{equation}\label{vpermute}
\text{there exists (a unique) } \tau\in\fS_l \st i<j \Ra 0<v_{\tau(j)}-v_{\tau(i)}<e.
\end{equation}

Since $\bm=\bs-\bv$, we have $\bm^\tau=\bs^\tau-\bv^\tau$. Because of (\ref{vpermute}), we see that $\bm^\tau\in\sD_{\bs^\tau}$, 
hence by Theorem \ref{bs}, $\Phi_{\bs^\tau}(n)$ is the canonical basic set for $(\Hkn,\bs^\tau)$. 
Since $\bs\in\cC_e$, $\bs^\tau\in\cC_e(\br^\tau)$ and therefore (using Remark \ref{importantremark} again), 
$\Phi_{\bs^\tau}(n)$ is the canonical basic set for $(\Hkn,\br^\tau)$ with respect to $\ll_{\bm^\tau}$.
Thus by Proposition \ref{uglovpermute}, $\tau^{-1}(\Phi_{\bs^\tau}(n))$ is the canonical basic set for $(\Hkn,\br)$ with respect to $\ll_\bm$.
Setting $\si=\tau^{-1}$, we get the result.

\endproof

\medskip

In the particular level 2 case, we thus have two different approaches which yield canonical basic sets.
Let $l=2$. Let $\bm\in\Q^2$, and take $\bs$ the $\bm$-adapted multicharge.
Denote $\si=(12)$ (in particular, $\si=\si^{-1}$).
Suppose that $\bm\notin\sD_{\bs}$. On the one hand, by Proposition \ref{basicsetmregular}, $\si(\Phi_{\bs^\si}(n))$ is the canonical basic set for $(\Hkn,\br)$.
On the other hand, we also have $\bm\in\sD_{\tbs}$, so that $\Phi_{\tbs}(n)$ is the canonical basic set for $(\Hkn,\br)$ 
(this is precisely Proposition \ref{basicsetmregularlevel2}).	

Hence one must have $\Phi_{\tbs}(n)=\si(\Phi_{\bs^\si}(n))$. In other terms, $$\Phi_{(s_1,s_2+e)}(n)=\{(\la^2,\la^1)\,;\, (\la^1,\la^2)\in\Phi_{(s_2,s_1)}(n)\}.$$
We recover a result by Jacon, namely \cite[Proposition 3.1]{Jacon2007}. 
However, in level $l>2$, the application $\bla \longmapsto \bla^\si$
is not necessarily a crystal isomorphism. Consequently, the canonical basic set $\si(\Phi_{\bs^{\si^{-1}}}(n))$ is not a priori a set of Uglov $l$-partitions.
However, we know exactly which of these applications are indeed isomorphisms between sets of some Uglov multipartitions.
Indeed, the crystal isomorphisms between the different sets of Uglov multipartitions (associated to $\bs\in\cC$) 
have been described by Jacon and Lecouvey in \cite{JaconLecouvey2010}.
In particular, \cite[Proposition 5.2.1]{JaconLecouvey2010} claims that 
$$\Phi_{(s_1,\dots,s_{l-1},s_l+e)}(n)=\{(\la^2,\dots,\la^l,\la^1)\,;\,(\la^1,\dots,\la^l)\in\Phi_{(s_l,s_1,\dots,s_{l-1})}(n)\}.$$
This proves that the application 
$$\begin{array}{cccc} \Phi_{\bs^{\si_0^{-1}}}(n) & \lra & \Phi_{\tbs}(n) & \\ \bla & \longmapsto & \bla^{\si_0} & \text{ with } \si_0=(1\;2\;\dots\;l) , \end{array}$$
where $\tbs=(s_1,\dots,s_{l-1},s_l+e)$, is a crystal isomorphism.
Applying several times $\si_0$ (which is of order $l$), we obtain $l-1$ different crystal isomorphisms $\bla\longmapsto\bla^{{\si_0}^k},\, k\in\llb 0,l-1\rrb$.

\begin{exa} Take $l=3$. Then ${\si_0}=(1\;2\;3)$ and ${\si_0}^2=(1\;3\;2)$. Then the following applications are crystal isomorphisms:

$$ \begin{array}{ccc} \Phi_{(s_3,s_1,s_2)}(n) & \lra & \Phi_{(s_1,s_2,s_3+e)} (n)\\ 
(\la^1,\la^2,\la^3) & \longmapsto & (\la^2,\la^3,\la^1)
\end{array},$$

$$ \begin{array}{ccc} \Phi_{(s_2,s_3,s_1)}(n) & \lra & \Phi_{(s_3,s_1,s_2+e)} (n)\\ 
(\la^1,\la^2,\la^3) & \longmapsto & (\la^2,\la^3,\la^1)
\end{array}.$$
\end{exa}

As in the level 2 case, it is possible to recover these results by looking at the domains $\sD_\bs$, $\bs\in\cC_e$.
Indeed, even though these domains do not tile $\Q^l$ (as already mentioned in Remark \ref{remarkmregularlevel2})
some weight sequences $\bm$ whose adapted permutation $\si$ verify $\si\neq\Id$ can also lie in a domain $\sD_{\hbs}$, for some $\hbs\in\cC_e$.
In that case we have two different constructions of the canonical basic set for $(\Hkn,\br)$, which must therefore coincide.
That is, for some values of $\si\in\fS_l$, the set $\si(\Phi_{\bs^{\si^{-1}}}(n))$ is necessarily a set of Uglov multipartitions $\Phi_{\hbs}(n)$, for some $\hbs\in\cC_e$.

Of course, it gets difficult to visualise the domains $\sD_\bs$ when $l\geq 3$.
Moreover, this argument does not hold whenever $\bla\longmapsto \bla^{\si}$ is not a crystal isomorphism between Uglov multipartitions.
In fact, these $\si$-twisted Uglov multipartitions yield in general new canonical basic sets for $(\Hkn,\br)$.

\medskip

\section{Canonical basic sets for asymptotic $\bm$}\label{masymptotic}

Let $\bs\in\cC_e$.
We will show that when the difference between the values of $\bs$ is large, the set of Uglov multipartitions stabilises 
(that is, no longer depends on the parameter $\bs$), 
and coincides with the set of Kleshchev multipartitions.
This is what we call the \textit{asymptotic} case, and such an $l$-tuple $\bs$ will be called asymptotic, see Definition \ref{defasymptotic}.

\subsection{Kleshchev multipartitions and asymptotic setting}\label{masymptoticdef}

\medskip

Let us recall in detail the relation between Uglov $l$-partitions and Kleshchev $l$-partitions. The Kleshchev $l$-partitions are defined
in the same manner as the Uglov $l$-partitions, except the order on $i$-nodes used to define an action of $\Ue$ on the Fock space is different.
Indeed, let $\ga=(a,b,c)$ and $\ga'=(a',b',c')$ be two removable or addable $i$-nodes of the same $l$-partition of $n$.
We define $$\ga\preck\ga' \eq \left\{ \begin{array}{l} c'<c \text{ \quad or} \\  c'=c \mand a'<a . \end{array}\right. $$

Note that this order only depends on the class $\cC_e$, not on some particular $\bs\in\cC_e$ anymore.

This permits us to give $\cF_\bs$ the structure of integrable $\Ue$-module via the same formulas used with $\prec_{(\bs,e)}$ in Theorem \ref{jmmo}.
We can then construct the crystal graph of the highest weight submodule spanned by $|\bemptyset,\bs\rangle$, in the same way as the Uglov multipartions (Theorem \ref{recursivedefuglov}).
Its vertices are labeled by what we call the \textit{Kleshchev $l$-partitions}. We denote by $\cK_{\cC_e}(n)$ the set of Kleshchev $l$-partitions of rank $n$.

Note that with this realisation as an $\Ue$-module, $\cF_\bs$ is actually a tensor product of Fock spaces of level $1$, see \cite{Uglov1999}.

\medskip

The following proposition connects both orders for certain values of $\bs$.

\begin{prop} \label{ug=kl} Let $\bs\in\cC_e$ such that $i<j \Ra s_i-s_j\geq n-e$ \footnote{Of course, this is equivalent to 
$s_i-s_{i+1}\geq n-e+1$ for all $i\in\llb 1 , l-1 \rrb$.}.
Then for all $m\leq n$, $\ug(m)=\cK_{\cC_e}(m)$. In particular , $\ug(n)=\cK_{\cC_e}(n)$.
\end{prop}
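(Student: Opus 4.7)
The plan is to proceed by induction on $m$. The base case $\ug(0) = \cK_{\cC_e}(0) = \{\bemptyset\}$ is immediate. For the inductive step, I assume $\ug(m) = \cK_{\cC_e}(m)$ for some $m < n$; by the recursive crystal-theoretic descriptions of $\ug(m+1)$ and $\cK_{\cC_e}(m+1)$, it suffices to show that for every $\bla$ in the common set and every residue $i \in \llb 0, e-1 \rrb$, the good addable $i$-node of $\bla$ with respect to $\prec_{(\bs,e)}$ coincides with the good addable $i$-node with respect to $\preck$. I will deduce this from the stronger key lemma: whenever $\bla \vdash_l m'$ with $m' \leq n-1$, the orders $\prec_{(\bs,e)}$ and $\preck$ coincide on the set of all addable/removable $i$-nodes of $\bla$. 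Once this is established the words $w_i(\bla)$ are identical in both settings, and consequently so are the cancellation outcome and the resulting good nodes.

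To compare two $i$-nodes $\ga = (a, b, c)$ and $\ga' = (a', b', c')$ of such a $\bla$, I split into two cases. When $c = c'$, both orders compare nodes on the boundary of the single partition $\la^c$: the addable and removable $i$-nodes of $\la^c$ lie on pairwise distinct diagonals (the map $a \mapsto \la^c_a - a$ is strictly decreasing), and tracing the boundary of $\la^c$ from bottom-left to top-right simultaneously decreases the row index $a$ and increases $b - a$. Hence sorting by $-a$ (as $\preck$ does within a fixed component) agrees with sorting by $b - a$ (as $\prec_{(\bs,e)}$ does, $s_c$ being constant along component $c$).

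The substantive case is $c \neq c'$; assume without loss of generality $c < c'$, so that $\ga' \preck \ga$ by definition and the $\prec_{(\bs,e)}$-tiebreak already favours $\ga'$. It remains to verify $b' - a' + s_{c'} \leq b - a + s_c$. I combine three ingredients: the size estimate $(b' - a') - (b - a) \leq |\la^{c'}| + |\la^c| \leq m' \leq n-1$ (every addable or removable node of $\la^c$ satisfies $|b - a| \leq |\la^c|$); the hypothesis $s_c - s_{c'} \geq n - e$; and the congruence $(b' - a') - (b - a) \equiv s_c - s_{c'} \pmod{e}$, forced by $\ga$ and $\ga'$ both having residue $i$. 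Setting $\delta := (b' - a') - (b - a) - (s_c - s_{c'}) \in e\Z$, the first two inequalities yield $\delta \leq (n-1) - (n-e) = e - 1$, which together with $\delta \in e\Z$ forces $\delta \leq 0$, as required.

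The main obstacle, and the reason for the cutoff $m \leq n$, is precisely the tightness of this last estimate. Extending the key lemma to rank $m' = n$ would only give $\delta \leq e$, allowing $\delta = e$ and a genuine disagreement of the two orders (small examples with $|\la^c| + |\la^{c'}| = n$ exhibit this). As stated, for $m' \leq n - 1$, the lemma lets the induction run from $0$ up to $n$ and produces $\ug(m) = \cK_{\cC_e}(m)$ throughout the claimed range.
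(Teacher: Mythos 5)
Your proof is correct and takes essentially the same route as the paper: show that $\prec_{(\bs,e)}$ and $\preck$ coincide on addable/removable $i$-nodes, splitting on whether $c = c'$, and in the cross-component case combining the bound on $|(b'-a')-(b-a)|$ with the residue congruence and the separation of the $s_i$. The explicit induction on $m$ is unnecessary scaffolding --- once the orders agree on nodes of every $\bla$ of rank $\leq n-1$, the recursive constructions of $\ug(m)$ and $\cK_{\cC_e}(m)$ already agree for all $m \leq n$, which is what the paper's ``it is sufficient to show'' captures --- but it is harmless.

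One place you are in fact more careful than the paper: the paper's proof takes $\bla \vdash_l m$ with $m \leq n$ (so the bound is $|(b'-a')-(b-a)| \leq n$) and in the case $c' < c$ invokes $s_{c'}-s_c \geq n-e+1$, which is the footnote's condition, not the hypothesis $s_i-s_j \geq n-e$ stated in the proposition (these are not equivalent; the footnote's condition is strictly stronger). With the stated hypothesis and rank $n$, one only gets $(b'-a'+s_{c'})-(b-a+s_c) \geq -e$, which $e$-divisibility does not improve to $\geq 0$; and you are right that this is sharp, since a rank-$n$ multipartition with $|\la^c|+|\la^{c'}| = n$ and $s_{c'}-s_c = n-e$ exactly can reverse the two orders. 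Cutting the rank to $\leq n-1$, as you do, is exactly what the crystal recursion actually requires and repairs the estimate to $\geq -e+1$, hence $\geq 0$, so your argument goes through with the hypothesis precisely as stated.
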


\proof It is sufficient to show that in this case, both orders on $i$-nodes are equivalent, i.e. $\ga\prec_{(\bs,e)}\ga'\eq\ga\preck\ga'$,
where $\ga=(a,b,c)$ and $\ga'=(a',b',c')$ are two removable or addable $i$-nodes of $\bla\vdash_l m$. 

Note that $-n\leq b'-a'-(b-a)\leq n$. Indeed, the difference between $b'-a'$ and $b-a$ is minimal if and only if ($\la^{c'}=\emptyset$ and $\la^c=(n)$)
or ($\la^{c'}=(1^n)$ and $\la^c=\emptyset$); and is maximal if and only if ($\la^{c'}=(n)$ and $\la^c=\emptyset$) or ($\la^{c'}=\emptyset$ and $\la^c=(1^n)$).

\medskip

First assume that $\ga\preck\ga'$. Then:

\begin{itemize} 
\item If $c'<c$, then  $s_{c'}-s_c\geq n-e+1$, hence $b'-a'+s_{c'}-(b-a+s_c)\geq -n+n-e+1 =-e+1$. 
Since $\ga$ and $\ga'$ have the same residue, this implies that
$b'-a'+s_{c'}$ and $b-a+s_c$ are congruent modulo $e$, thus $b'-a'+s_{c'}-(b-a+s_c)\geq 0$, and therefore $\ga\prec_{(\bs,e)}\ga'$.

\item If $c'=c$ and $a'<a$, then $b< b'$ since $\la^{c'}=\la^c$ is a partition and $\ga$ and $\ga'$ are on the border of $\la^c$.
Hence $b-a<b'-a'$, and $b-a+s_c< b'-a'+s_{c'}$, hence $\ga\prec_{(\bs,e)}\ga'$.
\end{itemize}

Conversely, assume that $\ga\prec_{(\bs,e)}\ga'$. Then:

\begin{itemize}
 \item If $b-a+s_c< b'-a'+s_{c'}$ then suppose $c'>c$. Then $s_c-s_{c'}\geq n-e+1$. 
Since $\ga$ and $\ga'$ have the same residue, we have $b'-a'+s_{c'}-(b-a+s_c)\geq e$, and thus $b'-a'-(b-a)\geq e+n-e+1=n+1$, whence a contradiction.
Hence $c'\leq c$. If $c'<c$ then $\ga\preck\ga'$, and if $c'=c$ then $b'-a'>b-a$ thus $a'<a$ for the same reason as before, and $\ga\preck\ga'$.
  \item If $b-a+s_c = b'-a'+s_{c'}$ and $c'<c$ then it is straightforward that $\ga\preck\ga'$.
\end{itemize}

The only difference (a priori) in the construction of the Uglov $l$-partitions on the one hand, and the Kleshchev $l$-partitions on the other hand
is the definition of the order on $i$-nodes.
Since we just proved that both orders coincide in this case, both sets are the same.

\endproof

\medskip

From this Proposition, we directly deduce:

\begin{cor} \label{corstabilisationkl}
Suppose $\bs\in\cC_e$. When the difference $s_i-s_j$, for all $i<j$, is sufficiently large, the set of Uglov multipartitions $\ug(n)$ stabilises,
and is equal to $\cK_{\cC_e}(n)$.
\end{cor}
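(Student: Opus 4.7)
The corollary is essentially a direct repackaging of Proposition \ref{ug=kl}, so the plan is very short.

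My plan is to simply quantify what "sufficiently large" means and then invoke the proposition. Concretely, I would fix $n$ and declare the differences $s_i-s_j$ (for $i<j$) to be \emph{sufficiently large} precisely when $s_i-s_j\geq n-e$ for all $i<j$; equivalently (as noted in the footnote of Proposition \ref{ug=kl}), when $s_i-s_{i+1}\geq n-e+1$ for all $i\in\llb 1,l-1\rrb$. Under this assumption Proposition \ref{ug=kl} applies and yields the equality $\Phi_{\bs}(n)=\cK_{\cC_e}(n)$.

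For the stabilisation claim, the key observation is that the right-hand side $\cK_{\cC_e}(n)$ depends only on the class $\cC_e$, not on the particular choice of representative $\bs$ within it. Indeed, the order $\preck$ used to define Kleshchev multipartitions was constructed precisely so as to be invariant under $\bs\mapsto\bs'$ for $\bs'\in\cC_e$, hence the associated crystal and its vertex set $\cK_{\cC_e}(n)$ are invariants of $\cC_e$. Therefore, as soon as $\bs\in\cC_e$ has all differences $s_i-s_j$ ($i<j$) at least $n-e$, the set $\Phi_{\bs}(n)$ equals the fixed set $\cK_{\cC_e}(n)$, and any further increase of the gaps (while staying in $\cC_e$) leaves this equality unchanged. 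This is exactly the stabilisation statement.

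There is essentially no obstacle: the work has already been done in Proposition \ref{ug=kl}. The only thing to be careful about is the quantifier — "sufficiently large" must be read as depending on $n$ (and $e$), since the threshold $n-e$ grows with $n$; this explains why Proposition \ref{ug=kl} actually gives the stronger conclusion $\Phi_{\bs}(m)=\cK_{\cC_e}(m)$ for every $m\leq n$, which in turn justifies why the same bound on $\bs$ simultaneously stabilises all ranks up to $n$.
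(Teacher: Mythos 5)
Your argument is correct and is exactly the route the paper takes: the corollary is stated in the text as an immediate consequence of Proposition \ref{ug=kl}, and the only real content is the two observations you spell out, namely that "sufficiently large" can be quantified by the threshold $s_i-s_j\geq n-e$ (equivalently $s_i-s_{i+1}\geq n-e+1$) from that proposition, and that the target set $\cK_{\cC_e}(n)$ is an invariant of the class $\cC_e$ rather than of the individual representative $\bs$. Nothing more is needed.
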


\begin{rem} \label{remarkstabilisationkl}
Note that the bound $n-e+1$ is not necessarily sharp (even though it is an optimal condition for both orders on $i$-nodes to coincide),
it is a priori possible for Uglov multipartitions to stabilise at a weaker condition on $\bs$.
\end{rem}

Actually, the set of Uglov multipartitions stabilises in other directions, that is, under other conditions of $\bs$.
More precisely, we will show that they stabilise whenever the difference between any arbitrary coordinates of $\bs$
(without the condition $i<j$) is "large enough".

In order to describe this phenomenon, we introduce the set  of \textit{twisted Kleshchev multipartitions}.
Let $\pi\in\fS_l$. We define the \textit{$\pi$-twisted Kleshchev order} on $i$-nodes as follows: 
Let $\ga=(a,b,c)$ and $\ga'=(a',b',c')$ be two removable or addable $i$-nodes of the same $l$-partition of $n$.
We set $$\ga\preck^\pi\ga' \eq \left\{ \begin{array}{l} \pi(c')<\pi(c) \text{ \quad or} \\  \pi(c')=\pi(c) \mand a'<a \end{array}\right. $$

This just means that the lexicographic convention on the coordinates of the $l$-partition is twisted by $\pi$.
The \textit{$\pi$-twisted Kleshchev $l$-partitions} are then defined as in the non-twisted case (and as the in the "Uglov" case): 
they label the vertices of the crystal graph of the same highest weight module defined via the action of $\Ue$ derived from this order $\preck^\pi$.
We denote them by $\cK_{\cC_e}^\pi(n)$.

\medskip

\begin{rem} \label{remarkklsi} Note that it is equivalent to either build the set of $\pi$-twisted Kleshchev multipartitions associated to $\cC_e$, 
  or to twist via $\pi$ the set of Kleshchev multipartitions associated to $\cC_e(\br^{\pi^{-1}})$, i.e. 
$$\cK_{\cC_e}^\pi(n)=\pi(\cK_{\cC_e^{\pi^{-1}}} (n) ),$$
where $\cC_e^{\pi^{-1}}:=\cC_e(\br^{\pi^{-1}})$.
\end{rem}

\medskip

We have the following "asymptotic" property:

\begin{prop}\label{ug=klsi}
 Let $\bs\in\cC_e$ such that there exists $\pi\in\fS_l$ verifying $\pi(i)<\pi(j) \Ra s_i-s_j\geq n+1$.
 Then $\ug(m)=\cK_{\cC_e}^\pi(m)$ for all $m\leq n$. In particular, $\ug(n)=\cK_{\cC_e}^\pi(n)$.
\end{prop}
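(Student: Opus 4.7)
The plan is to extend the strategy of Proposition \ref{ug=kl} to the twisted setting: show that under the given hypothesis the Uglov order $\prec_{(\bs,e)}$ and the $\pi$-twisted Kleshchev order $\preck^\pi$ coincide on the set of addable and removable $i$-nodes of any $l$-partition of rank at most $n$. Once this is established, the crystal graph constructions defining $\ug(m)$ and $\cK^\pi_{\cC_e}(m)$ use the same data, so they produce the same vertex sets for every $m\leq n$, and the equality follows immediately.

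The key quantitative input is that for any two addable or removable nodes $\ga=(a,b,c)$ and $\ga'=(a',b',c')$ of a multipartition of rank $m\leq n$, one has $|(b'-a')-(b-a)| \leq n$; indeed, the content of any such node of $\la^c$ lies in $[-|\la^c|, |\la^c|]$, and $|\la^c|+|\la^{c'}| \leq m \leq n$. Combined with the hypothesis $\pi(i)<\pi(j) \Ra s_i-s_j\geq n+1$, this forces, whenever $c\neq c'$, the sign of $(b'-a'+s_{c'})-(b-a+s_c)$ to be entirely controlled by the comparison of $\pi(c)$ and $\pi(c')$. Running the case analysis: if $\pi(c')<\pi(c)$, then $s_{c'}-s_c\geq n+1$ yields $(b-a+s_c) < (b'-a'+s_{c'})$, giving both $\ga\preck^\pi\ga'$ and $\ga\prec_{(\bs,e)}\ga'$; the reverse implication excludes the opposite orientation by the same inequality. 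If $c=c'$, then $s_c=s_{c'}$, so both orders reduce to comparing $b-a$ with $b'-a'$, which on the border of $\la^c$ is equivalent to comparing $a$ with $a'$.

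The main obstacle is the same-component bookkeeping, specifically ruling out $a=a'$ when the two $i$-nodes lie in the same row of $\la^c$: if one were addable and the other removable, their columns would differ by exactly $1$, so their residues would differ by $1 \bmod e$, contradicting $e>1$ and the assumption that both are $i$-nodes. With this subtlety handled, the remaining arguments are a direct case analysis mirroring Proposition \ref{ug=kl}, with the permutation $\pi$ playing the role that the identity plays there.
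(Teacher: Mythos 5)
Your proof takes essentially the same route as the paper: show the two orders $\prec_{(\bs,e)}$ and $\preck^\pi$ coincide on addable/removable $i$-nodes of any $l$-partition of rank at most $n$, using the content bound $|(b'-a')-(b-a)|\leq n$ together with the hypothesis $s_i-s_j\geq n+1$, and then observe that the two crystal constructions coincide. Your explicit justification for ruling out $a=a'$ in the same component (via the residue argument) is a small extra detail the paper treats implicitly, but it does not change the argument; this is a correct reproduction of the paper's proof.
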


\proof
It is very similar to the one of Proposition \ref{ug=kl}. 
Indeed, we show that for $\ga=(a,b,c)$ and $\ga'=(a',b',c')$ two removable or addable $i$-nodes of $\bla\vdash_l m$, $\ga\prec_{(\bs,e)}\ga'\eq\ga\preck^\pi\ga'$.

\medskip

Assume that $\ga\preck^\pi\ga'$. Then:

\begin{itemize} 
\item If $\pi(c')<\pi(c)$, then  $s_{c'}-s_c\geq n+1$, hence $b'-a'+s_{c'}-(b-a+s_c)\geq -n+n+1 =1>0$. 
Hence $b'-a'+s_{c'}>(b-a+s_c)$ and $\ga\prec_{(\bs,e)}\ga'$.

\item If $\pi(c')=\pi(c)$ and $a'<a$. Then $c'=c$ since $\pi$ is a permutation.
Thus $b < b'$ since $\la^{c'}=\la^c$ is a partition and $\ga$ and $\ga'$ are on the border of $\la^c$.
Hence $b-a<b'-a'$, and $b-a+s_c< b'-a'+s_{c'}$, hence $\ga\prec_{(\bs,e)}\ga'$.
\end{itemize}

Conversely, assume that $\ga\prec_{(\bs,e)}\ga'$. Then:

\begin{itemize}
 \item If $b-a+s_c< b'-a'+s_{c'}$ then suppose $\pi(c')>\pi(c)$. Then $s_c-s_{c'}\geq n+1$, and thus $b'-a'-(b-a)> n+1$, whence a contradiction.
Hence $\pi(c')\leq\pi(c)$. If $\pi(c')<\pi(c)$ then 
$\ga\preck^\pi\ga'$, and if $\pi(c')=\pi(c)$ then $c'=c$ and $b'-a'>b-a$ thus $a'<a$ , and $\ga\preck^\pi\ga'$.
  \item If $b-a+s_c = b'-a'+s_{c'}$ and $\pi(c')<\pi(c)$ then $\ga\preck^\pi\ga'$.
\end{itemize}

Again, the only difference in the construction of the Uglov $l$-partitions on the one hand, and the $\pi$-twisted Kleshchev $l$-partitions on the other hand
is the definition of the order on $i$-nodes.
Since we have proved that both orders coincide in this case, these sets are the same.

\endproof

\medskip

Hence, we directly deduce the following stabilisation property, whenever the difference between two arbitrary coordinates of $\bs$ is large:

\begin{cor}\label{corstabilisationklsi}
 Let $\bs\in\cC_e$ and let $\pi\in\fS_l$. 
 When the difference $s_i-s_j$, for all $\pi(i)<\pi(j)$, is sufficiently large, 
then the set of Uglov $l$-partitions $\ug(n)$ stabilises, and is equal to $\cK_{\cC_e}^\pi(n)$.
\end{cor}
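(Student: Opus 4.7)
The plan is to deduce Corollary \ref{corstabilisationklsi} essentially as an immediate packaging of Proposition \ref{ug=klsi}, once the phrase ``sufficiently large'' is made precise. Concretely, I would fix the threshold to be $n+1$: that is, I interpret the hypothesis as requiring $s_i - s_j \geq n+1$ for every pair $(i,j)$ with $\pi(i) < \pi(j)$. Under this hypothesis, Proposition \ref{ug=klsi} applies verbatim and gives the equality $\Phi_{\bs}(n) = \cK_{\cC_e}^\pi(n)$, which is one of the two assertions of the corollary.

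The second assertion is the word ``stabilises''. The key observation is that the right-hand side $\cK_{\cC_e}^\pi(n)$ depends only on the class $\cC_e$ and on the permutation $\pi$, not on the particular representative $\bs \in \cC_e$. This is visible from the definition of the order $\preck^\pi$ introduced in Section \ref{masymptoticdef}: the order is built from $\pi$ and the lexicographic data of nodes, with no further dependence on $\bs$ (it was already noted before Proposition \ref{ug=kl} that $\preck$ only depends on $\cC_e$, and $\preck^\pi$ is simply obtained by permuting the component indices via $\pi$). Consequently, as soon as two multicharges $\bs, \bs' \in \cC_e$ both satisfy the asymptotic inequalities $s_i - s_j \geq n+1$ and $s'_i - s'_j \geq n+1$ for every $\pi(i) < \pi(j)$, Proposition \ref{ug=klsi} yields
\[
\Phi_{\bs}(n) \;=\; \cK_{\cC_e}^\pi(n) \;=\; \Phi_{\bs'}(n),
\]
which is exactly the claimed stabilisation of the Uglov set inside the asymptotic region cut out by $\pi$.

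I do not anticipate any real obstacle: the analytic content of the corollary has already been carried out in Proposition \ref{ug=klsi}, and the only additional remark needed is the $\bs$-independence of $\cK_{\cC_e}^\pi(n)$. The mildest subtlety is purely notational: since the hypothesis of Proposition \ref{ug=klsi} is stated for the single rank $m \leq n$, one should observe that increasing $n$ in the asymptotic inequalities only strengthens them, so the equality $\Phi_\bs(n) = \cK_{\cC_e}^\pi(n)$ is preserved under further translations of $\bs$ within the asymptotic region.
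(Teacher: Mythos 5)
Your proof is correct and follows the same route the paper intends: the corollary is simply Proposition~\ref{ug=klsi} with the threshold $n+1$, plus the observation (already noted before Proposition~\ref{ug=kl}) that $\cK_{\cC_e}^\pi(n)$ depends only on $\cC_e$ and $\pi$, not on the chosen representative $\bs\in\cC_e$, which is exactly what ``stabilises'' means. The paper states the corollary as a direct deduction without further comment, so your explicit unpacking of the stabilisation claim is the same argument, just spelled out.
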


\medskip

Note that such a permutation $\pi$ verifies in particular $\pi(i) < \pi(j) \Ra s_i > s_j$.
We thus call $\pi$ the \textit{reordering permutation of $\bs$}.

\medskip

\begin{rem} As in Remark \ref{remarkstabilisationkl}, note that the bound is not necessarily sharp, 
and that Uglov multipartitions are likely to stabilise under weaker conditions.
In fact, when $\pi=\Id$, Proposition \ref{ug=klsi} gives a bound (namely $n+1$) on each $s_i-s_j$ beyond which $\ug(n)=\cK_{\cC_e}(n)$, 
but which is less precise than the one given in Proposition \ref{ug=kl} (namely $n-e+1$).
However, when $\pi\neq\Id$, the bound $n+1$ is optimal for the orders $\prec_{(\bs,e)}$ and $\preck^\pi$ to coincide.
\end{rem}

\medskip

\begin{rem}\label{remarkugsi=ug} Let $\bs$ and $\pi$ be as in Corollary \ref{corstabilisationklsi}, i.e. $\ug(n)=\cK_{\cC_e}^\pi(n)$.

It is important to notice that for all $\si\in\fS_l$, 
\begin{equation} \label{eq0} \Phi_{\bs^\si}(n)=\si(\cK_{\cC_e}^\pi(n)). \end{equation}
Indeed, this directly follows from the definition of the Kleshchev order on $i$-nodes. 
Since in this case $\ug(n)$ is a set of ($\pi$-twisted) Kleshchev multipartitions,
it is equivalent to either \begin{itemize}
                            \item twist the multicharge via $\bs\mapsto \bs^\si$ and build the corresponding Uglov crystal, or
                            \item twist via $\bla\mapsto\bla^\si$ these $\pi$-twisted Kleshchev $l$-partitions.
                           \end{itemize}

In other terms, replacing $\si$ by $\si^{-1}$, (\ref{eq0}) is equivalent to:

$$\si(\Phi_{\bs^{\si^{-1}}}(n))=\cK_{\cC_e}^\pi(n).$$
 
In particular, this shows that the canonical basic set $\si(\Phi_{\bs^{\si^{-1}}}(n))$ of Proposition \ref{basicsetmregular} is always equal to $\cK_{\cC_e}^\pi(n)$,
for any value of $\si\in\fS_l$.

\end{rem}

\medskip

We can now define \textit{asymptotic} multicharges and weight sequences.

\begin{defi}\label{defasymptotic} $\ $ \vspace{1mm}
 \begin{enumerate}
  \item Let $\bs\in\cC_e$. We say that $\bs$ is asymptotic if $\ug(n)=\cK_{\cC_e}^\pi(n)$ for some $\pi\in\fS_l$
  (in which case $\pi$ is the reordering permutation of $\bs$).
  \item Let $\bm\in\Q^l$. We say that $\bm$ is asymptotic if the $\bm$-adapted multicharge (see Proposition \ref{basicsetmregular}) is asymptotic.
 \end{enumerate}
\end{defi}

\medskip

\begin{rem}\label{remarkdefasymptotic}
According to Remark \ref{remarkugsi=ug}, $\bs$ is asymptotic if and only if for all $\si\in\fS_l$, \linebreak $\si(\Phi_{\bs^{\si^{-1}}}(n))=\cK_{\cC_e}^\pi(n)$.
\end{rem}

\medskip

Let us now focus on the question of the existence of canonical basic sets, given an asymptotic weight sequence $\bm$.
In the case where $\bm\notin\sP$, $\bm$ is regular, and we have already shown in Proposition \ref{basicsetmregular} 
that $(\Hkn,\br)$ admits a canonical basic set with respect to $\ll_\bm$, namely the set $\si(\Phi_{\bs^{\si^{-1}}}(n))$ where $\bs$ is the $\bm$-adapted multicharge
and $\si$ the $\bm$-adapted permutation. 
In virtue of Remark \ref{remarkugsi=ug}, these sets of $l$-partitions are all equal to $\cK_{\cC_e}^\pi(n)$.
We will show that in the remaining asymptotic cases,
the order $\ll_\bm$ yields a canonical basic set for $(\Hkn,\br)$ which is also a set of twisted Kleshchev multipartitions.

\subsection{Kleshchev multipartitions as canonical basic sets} \label{klmpascbs}

\medskip

Fix $\bm\in\sP$ such that $\bm$ is asymptotic.
Because of Lemma \ref{lemminP}, this means that $\bm\in\bigcup_{(i,j)\in J}\cP_{i,j}(\bs)$,
where $\bs$ is the $\bm$-adapted multicharge and is asymptotic, and where $J\subset\llb 1, l \rrb ^2$.

In order to understand the phenomenon that appears, it is interesting to keep in mind the results of Uglov in \cite{Uglov1999}.
They have also been reformulated in the thesis of Yvonne \cite{Yvonne2005}.
In his paper, Uglov defined a combinatorial order $\llu$ to study the matrix of the \textit{canonical basis of $\cF_\bs$}.
First, he showed that the Fock space can be endowed with a so-called (positive) canonical basis, which generalises the notion of canonical bases for integrable $\Ue$-modules.
The elements of this basis are indexed by $l$-partitions. 
Therefore, there is a transition matrix between this basis and the basis of $l$-partitions $\De$, whose rows and columns are indexed by $\Pi_l(n)$. 
It turns out that one can recover the decomposition matrix $D$ of the Ariki-Koike algebra $\Hkn$ by specialising $\De$ at $q=1$,
and by keeping only the columns indexed by Uglov multipartitions associated to $\bs$.
The interesting part is that this property holds for any multicharge $\bs\in\cC_e$ 
even though the matrices $\De$ associated to $\bs$ and $\bs'$ are different in general!

Moreover, Uglov proved that $\De$ is always unitriangular with respect to $\llu$, see \cite[Proposition 4.11]{Uglov1999}.
This implies that $(\Hkn,\br)$ has a canonical basic set with respect to $\llu$, namely the set of Uglov $l$-partitions.
Now, when $\bs$ is "asymptotic enough", one can show that the order $\ll_\bm$ is a refinement of $\llu$, that is
\begin{equation}\label{compatibilityorders} \bmu\llu\bla \Ra \bmu\ll_\bm \bla. \end{equation}
Thus, if $\bm\in\cP_{i,j}(\bs)$ for such an $\bs$, we are ensured that the set of Uglov multipartitions 
(which coincide with some $\pi$-twisted Kleshchev multipartitions)
is the canonical basic set for $(\Hkn,\br)$ with respect to $\ll_\bm$.

Unfortunately, this particular setting does not cover all the asymptotic cases. 
Indeed, the definition of an asymptotic weight sequence given in \ref{defasymptotic}
is not sufficient to deduce the compatibility property (\ref{compatibilityorders}) above.
However, using only combinatorial arguments, we can show the more general following result.

\begin{prop}\label{basicsetmasymptotic} Let $\bm\in\sP$ be an asymptotic weight sequence, let $\bs$ be the $\bm$-adapted multicharge.
Then $(\Hkn,\br)$ admits a canonical basic set with respect to $\ll_\bm$, namely the set $\cK_{\cC_e}^\pi(n)$, where
$\pi$ is the reordering permutation of $\bs$.
\end{prop}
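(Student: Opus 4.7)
The plan is to reduce to the regular asymptotic case of Proposition \ref{basicsetmregular} by a small generic perturbation of $\bm$, and then transport the canonical basic set property from $\ll_{\bm_\epsilon}$ to $\ll_\bm$ using the affine dependence of the shifted symbols on $\bm$, together with the direction-independence of the candidate set that asymptoticity provides.

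First, I would choose a generic direction $\bu \in \Q^l$ such that $\bm_\epsilon := \bm + \epsilon \bu$ lies outside $\sP$ for all small enough $\epsilon \neq 0$ and remains inside $B_{e/2}(\bs)$. By Lemma \ref{lemminP} the set of hyperplanes of $\sP$ through $\bm$ is finite (all of the form $\cP_{i,j}(\bs)$ for the $\bm$-adapted multicharge $\bs$), so a generic $\bu$ will do, and the ball condition keeps $\bs$ as the $\bm_\epsilon$-adapted multicharge. Proposition \ref{basicsetmregular} applied to $\bm_\epsilon$ and to $\bm_{-\epsilon}$ yields a canonical basic set of the form $\si_\epsilon(\Phi_{\bs^{\si_\epsilon^{-1}}}(n))$ for $(\Hkn,\br)$, and by Remark \ref{remarkdefasymptotic} asymptoticity of $\bs$ identifies this set with $\cK_{\cC_e}^\pi(n)$ regardless of $\si_\epsilon$. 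Via Ariki's theorem, the bijection $\Irr(\Hkn) \to \cK_{\cC_e}^\pi(n)$, $M \mapsto \bla_M$, is canonical (induced by the crystal/canonical basis of $V(\br)$), hence the same for both $\bm_\epsilon$ and $\bm_{-\epsilon}$.

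Next, I would verify the three conditions of Definition \ref{basicset} for $\ll_\bm$ with this same map $M \mapsto \bla_M$. Conditions (2) and (3) are inherited directly, as they do not involve the order. For condition (1), I would fix $\bmu \in \sS(M) \setminus \{\bla_M\}$; on a small enough interval around $\epsilon = 0$ the sorted order of the entries of $\fB_{\bm_\epsilon}(\bla_M)$ and $\fB_{\bm_\epsilon}(\bmu)$ is stable, so each entry $\fb_{\bm_\epsilon}^i(\bla)$ depends affinely on $\epsilon$, and so does every partial sum
$$\delta_d(\epsilon) := \sum_{i=1}^d \bigl(\fb_{\bm+\epsilon\bu}^i(\bla_M) - \fb_{\bm+\epsilon\bu}^i(\bmu)\bigr).$$
Direction-independence gives $\delta_d(\epsilon) \geq 0$ on both sides of $0$, so each $\delta_d$ is either strictly positive at $0$ or identically zero near $0$. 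The second alternative cannot hold for all $d$ simultaneously: that would give $\fb_{\bm_\epsilon}(\bla_M) = \fb_{\bm_\epsilon}(\bmu)$ for all small $\epsilon$, contradicting the strict dominance $\fb_{\bm_\epsilon}(\bla_M) \rhd \fb_{\bm_\epsilon}(\bmu)$ forced by $\bla_M \ll_{\bm_\epsilon} \bmu$ and $\bla_M \neq \bmu$. Thus $\delta_d(0) > 0$ for some $d$, giving $\fb_\bm(\bla_M) \rhd \fb_\bm(\bmu)$ and hence $\bla_M \ll_\bm \bmu$.

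The hard part will be the technical book-keeping around the fact that the sorting of the entries of $\fB_\bm(\bla)$ can reshuffle as $\bm$ varies, so $\fb_\bm^i(\bla)$ is only piecewise affine in $\bm$. The affine analysis above must be carried out on an $\epsilon$-interval on which no such reshuffles occur for the finitely many multipartitions $\bla_M$ and $\bmu$ involved; such an interval exists because reshuffle events are isolated zeros of finitely many affine differences of entries. Once this piecewise-affine set-up is in place, the direction-independence of $\cK_{\cC_e}^\pi(n)$ supplied by asymptoticity furnishes the crucial two-sided non-negativity of each $\delta_d$, and the argument closes.
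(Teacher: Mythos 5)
Your approach is genuinely different from the paper's, and the idea is appealing: instead of the paper's one‑sided perturbation $\bm^{[\eps,I]}$ together with Lemma~\ref{lemmsingular} (the ``never reverses'' continuity lemma), you perturb $\bm$ symmetrically in a generic direction $\bu$ and exploit the direction‑independence of the candidate basic set that asymptoticity supplies, forcing the partial sums $\delta_d$ to be nonnegative on \emph{both} sides of $\epsilon=0$. This two‑sided philosophy is elegant and, in principle, attacks the problem from a complementary angle.

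However, there is a genuine gap precisely at the point you flag as ``the hard part''. You assert that ``on a small enough interval around $\epsilon=0$ the sorted order of the entries of $\fB_{\bm_\epsilon}(\bla_M)$ and $\fB_{\bm_\epsilon}(\bmu)$ is stable'', and the concluding paragraph claims such an interval exists because reshuffle events are isolated. But $\bm\in\sP$ means $m_i-m_j\in\Z$ for the indices $(i,j)\in J$, which is exactly the condition under which symbol entries from rows $i$ and $j$ can coincide. Then $\epsilon=0$ is \emph{itself} a reshuffle event, and no interval containing $0$ is reshuffle‑free. In that situation $\delta_d$ is only continuous piecewise affine, with a possible kink at $0$, and your dichotomy ``$\delta_d(0)>0$ or $\delta_d\equiv 0$'' is no longer forced: $\delta_d(\epsilon)\geq 0$ on both sides is compatible with $\delta_d(0)=0$ and $\delta_d\not\equiv 0$ (a kinked nonnegative function vanishing only at $0$). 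Concretely, you have not ruled out the possibility that $\fb_\bm(\bla_M)=\fb_\bm(\bmu)$ with $\bla_M\neq\bmu$, and in that case $\bla_M\not\ll_\bm\bmu$, so condition (1) of Definition~\ref{basicset} fails. You do obtain the nonstrict dominance $\fb_\bm(\bla_M)\unrhd\fb_\bm(\bmu)$ from continuity, but the strictness $\fb_\bm(\bla_M)\neq\fb_\bm(\bmu)$ required by $\rhd$ is not established.

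To close the gap you would need an additional argument, separate from the affine dichotomy, showing that the generic direction $\bu$ detects the difference between $\bla_M$ and $\bmu$ whenever the sorted symbols agree at $\bm$; for instance, that if $\fb_\bm(\bla_M)=\fb_\bm(\bmu)$ and $\bla_M\neq\bmu$, then for generic $\bu$ some $\delta_d$ changes sign at $\epsilon=0$, contradicting the two‑sided nonnegativity you have already secured. This is plausible and would make your proof work, but it is not contained in what you wrote. (A lesser point: the assertion that the bijection $M\mapsto\bla_M$ is the same for $\bm_\epsilon$ and $\bm_{-\epsilon}$ is correct, but deserves a sentence invoking the canonical‑basis labelling of columns from the proof of Theorem~\ref{bs}/Proposition~\ref{uglovpermute}, since the $\bm_\epsilon$‑adapted permutation $\si_\epsilon$ may differ on the two sides.) For reference, the paper's proof bypasses the kink issue by using the tailored one‑sided perturbation $\bm^{[\eps,I]}$, for which Lemma~\ref{lemmsingular} guarantees that the order never reverses and incomparability is preserved, and then derives a contradiction with the minimality of $\bla_M$ with respect to $\ll_{\bm^{[\eps,I]}}$.
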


\medskip

In order to prove this, we need the following technical lemma.
We introduce the following notation. Given a weight sequence $\bm$, $\eps>0$ and $I \subset \llb 1, l\rrb$, we define a new weight sequence by 
$\bm^{[\eps,I]}:=(m^{[\eps,I]}_i)_{i=1\dots l}$ where
$$m^{[\eps,I]}_i= \left\{ \begin{array}{l} m_i \text{ if } i\notin I \\ m_i+i\eps \text{ if } i\in I\end{array}\right. $$
\begin{exa}\label{examperturbe}
Take $l=3$ and $I=\{1,3\}$.
Then $\bm^{[\eps,I]}=(m_1+\eps,m_2,m_3+3\eps)$.
\end{exa}

\begin{lem} \label{lemmsingular} Let $\bm$ be an arbitrary weight sequence. Let $\bla,\bmu\vdash_l n$, $\bla\neq\bmu$.
Then there exists $\al_{\bla,\bmu}>0$ such that for all $\eps\in ]0,\al_{\bla,\bmu} [$,

\begin{enumerate}
\item $$ \bla\ll_\bm \bmu \quad \Ra \quad \left[ 
\begin{array}{c} \forall I \subset \llb 1, l\rrb, \text{ either \quad \quad  } \bla \ll_{\bm^{[\eps,I]}} \bmu \text{ \quad \quad or }   \\
\bla \text{ and } \bmu \text{ are not  } \text{ comparable with respect to } \ll_{\bm^{[\eps,I]}},   \;  \end{array} \right] $$

and

\item $$ \left[\begin{array}{c} \bla \text{ and } \bmu \text{ are not comparable} \\ \text{with respect to } \ll_\bm \end{array}\right] \Ra
\left[ \begin{array}{c} \bla \text{ and } \bmu \text{ are not comparable with} \\ \text{respect to } \ll_{\bm^{[\eps,I]}}, 
\; \forall I \subset \llb 1, l\rrb . \end{array}\right], $$

\end{enumerate}

\end{lem}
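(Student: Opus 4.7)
The plan is to reduce both assertions to a continuity argument for the partial-sum functions defining the dominance order on symbols. Choose $p$ large enough to define the shifted symbols $\fB_\bm(\bla)$ and $\fB_\bm(\bmu)$. For $\eps>0$ sufficiently small (depending only on $\bm$) one has $\lfloor m^{[\eps,I]}_j\rfloor=\lfloor m_j\rfloor$ for every $j$ and every $I\subset\llb 1,l\rrb$, so the same $p$ works for every perturbed symbol and the total size $h=lp+\sum_j\lfloor m_j\rfloor$ is unchanged. For $j\in I$ the $j$-th row of $\fB_{\bm^{[\eps,I]}}(\bnu)$ is obtained from that of $\fB_\bm(\bnu)$ by shifting each entry by $j\eps$; rows indexed outside $I$ are unaltered.

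For any $\bnu\vdash_l n$ and $d\in\llb 1,h\rrb$, set
\[
 f^\bnu_d(\eps,I)=\sum_{i=1}^{d}\fb_{\bm^{[\eps,I]}}^i(\bnu),
\]
which equals the maximum, over all $d$-element submultisets $T$ of the multiset of entries of $\fB_{\bm^{[\eps,I]}}(\bnu)$, of the sum of the elements of $T$. As a maximum of finitely many affine functions of $\eps$, $f^\bnu_d(\cdot,I)$ is continuous (in fact piecewise linear). Since $|\bla|=|\bmu|=n$, we have $f^\bla_h(\eps,I)=f^\bmu_h(\eps,I)$, because the total sum of symbol entries depends only on $n$, $p$ and $\bm^{[\eps,I]}$. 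Put $h_d(\eps,I)=f^\bla_d(\eps,I)-f^\bmu_d(\eps,I)$. By the very definition of $\ll_\bm$ on sequences of rational numbers, and since $\bla\neq\bmu$, the relation $\bla\ll_{\bm^{[\eps,I]}}\bmu$ is equivalent to $h_d(\eps,I)\geq 0$ for all $d$ with at least one strict inequality; the symmetric condition characterises $\bmu\ll_{\bm^{[\eps,I]}}\bla$.

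For (1), assume $\bla\ll_\bm\bmu$ with $\bla\neq\bmu$. Then there exists $d_0$ with $h_{d_0}(0,\emptyset)>0$. By continuity of $h_{d_0}(\cdot,I)$ at $0$ and finiteness of the collection of subsets $I$, there exists $\al^{(1)}>0$ such that $h_{d_0}(\eps,I)>0$ for every $\eps\in(0,\al^{(1)})$ and every $I$. This forbids $\bmu\ll_{\bm^{[\eps,I]}}\bla$, so either $\bla\ll_{\bm^{[\eps,I]}}\bmu$ or the two are $\ll_{\bm^{[\eps,I]}}$-incomparable. For (2), non-comparability with respect to $\ll_\bm$ produces indices $d_1,d_2$ with $h_{d_1}(0,\emptyset)>0$ and $h_{d_2}(0,\emptyset)<0$; the same continuity argument yields $\al^{(2)}>0$ preserving both strict signs for every $\eps\in(0,\al^{(2)})$ and every $I$, so non-comparability persists.

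Taking $\al_{\bla,\bmu}$ to be the minimum of $\al^{(1)}$, $\al^{(2)}$, and the threshold that controls the floors of the $m_j^{[\eps,I]}$ completes the proof. The argument is entirely elementary; the only point requiring care is that one must compare symbols of the same length $h$ on both sides of any dominance inequality, which is precisely what the control of the floors ensures.
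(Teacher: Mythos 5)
Your argument is correct and reaches the same conclusion, but the technique is cleaner. The paper's proof first reduces to single-coordinate perturbations $\bm^{[\eps,k]}$ by induction, then introduces a threshold $\be$ (governed by the fractional parts of the $m_j$) whose role is to ensure that the decreasing rearrangement of symbol entries is stable under the perturbation --- so that each $\fb_{\bm^{[\eps,k]}}^i(\bnu)$ is either $\fb_\bm^i(\bnu)$ or $\fb_\bm^i(\bnu)+\eps$ --- and finally takes $\al_{\bla,\bmu}=\min(\be,\de/p)$ for an explicit arithmetic estimate on partial sums. Your proof sidesteps the rearrangement-stability issue entirely by observing that the $d$-th partial sum $f^\bnu_d(\eps,I)$, being the maximum over $d$-element subsets of sums of affine functions of $\eps$, is automatically continuous regardless of how the ordering of entries reshuffles, and it treats all $I\subset\llb 1,l\rrb$ at once rather than coordinate by coordinate. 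One caveat applies equally to both proofs: in Assertion 2 you (like the paper) tacitly assume that incomparability of $\bla\neq\bmu$ under $\ll_\bm$ produces indices $d_1,d_2$ with $h_{d_1}(0)>0$ and $h_{d_2}(0)<0$. That fails precisely when $\fb_\bm(\bla)=\fb_\bm(\bmu)$, which can happen if some $m_i,m_j$ agree modulo $1$; for instance with $l=2$, $\bm=(0,0)$, $\bla=((1),\emptyset)$, $\bmu=(\emptyset,(1))$ and $p=2$, one has $\fb_\bm(\bla)=\fb_\bm(\bmu)=(2,1,0,0)$, yet the perturbation $I=\{2\}$ makes $\bmu\ll_{\bm^{[\eps,\{2\}]}}\bla$ for every $\eps>0$, so Assertion 2 as stated is actually false in that degenerate case. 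This is a defect of the lemma statement (shared by the paper's own proof), not something your rewriting introduced, but it is worth flagging.
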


This means that for a small perturbation of $\bm$, the order $\ll_\bm$ never reverses:
at worst, $\bla$ and $\bmu$ become uncomparable.
Moreover, one can never gain comparability between multipartitions uncomparable with respect to $\ll_\bm$ when slightly perturbing $\bm$.

\medskip

\proof First, note that it is sufficient to prove these properties for the perturbations $\bm^{[\eps,k]}$ of $\bm$ defined by 
$\bm^{[\eps,k]}=(m_1,\dots, m_{k-1}, m_k+\eps, m_{k+1},\dots ,m_l)$, for all $k\in\llb 1,l \rrb$.
Indeed, the result then follows by induction, since $\bm^{[\eps,I]}$ is constructed by iterating this procedure.

\medskip

Recall that, by definition (Section \ref{canonicalbasicsets}), $\bla\ll_\bm \bmu$ and $\bla \neq \bmu$ means that $\fb_\bm(\bla) \rhd \fb_\bm(\bmu)$,
where $\fb_\bm(\la)=(\fb_\bm^1(\bla),\dots,\fb_\bm^h(\bla))$ is the decreasing sequence consisting of the elements of $\fB_\bm(\bla)$.

Let us first prove Assertion 1.

Let $\bla\ll_\bm \bmu$. For $k\in\llb 1, l \rrb$, consider the order $\ll_{\bm^{[\eps,k]}}$. It is obtained from $\ll_\bm$ simply by 
translating the $k$-th row of the symbols by $\eps$, and taking the dominance order on the decreasing sequences of these new elements.
Informally, when we choose $\eps$ to be "small", one cannot have $\bmu\ll_{\bm^{[\eps,k]}} \bla$.
Indeed, the fact that $\bla\ll_\bm \bmu$ creates a gap at some point between $\sum_i \fb_\bm^i(\bmu)$ and $\sum_i \fb_\bm^i(\bla)$, 
which cannot be recovered if $\eps$ is small enough.

Let us prove this properly. We also use a running example to illustrate the different points of the coming proof.
For simplicity, since $\bm\in\cP_{i,j}(\bs)$ for some $i,j$ and some $\bs\in\Z^l$, we can assume without loss of generality that $m_i$ and $m_j$ are integers.

\medskip

\textit{Example:} Take $e=2$, $l=3$, $n=38$, $\br=(1,0,0)$, $\bm=(3\nicefrac{1}{3},7,5)\in\cP_{2,3}(1,2,0)$. Let $\bla=(4.1,4^2.3.2^3.1,4^3.2.1)\vdash_l n$ 
and $\bmu=(4.2,4.3.2^5,5.4^2.1^2)\vdash_l n$.
The shifted $\bm$-symbols of $\bla$ and $\bmu$ of size $1$ are the following:

$$\fB_\bm(\bla)=\begin{pmatrix}
                 0 & 2 & 4 & 7 & 8 & 9 & & \\
                 0 & 2 & 4 & 5 & 6 & 8 & 10 & 11 \\
                 0\nicefrac{1}{3} & 1\nicefrac{1}{3} & 3\nicefrac{1}{3} & 7\nicefrac{1}{3} & & &
                \end{pmatrix}$$
and 
$$\fB_\bm(\bmu)=\begin{pmatrix}
                 0 & 2 & 3 & 7 & 8 & 10 & & \\
                 0 & 3 & 4 & 5 & 6 & 7 & 9 & 11 \\
                 0\nicefrac{1}{3} & 1\nicefrac{1}{3} & 4\nicefrac{1}{3} & 7\nicefrac{1}{3} & & &
                 \end{pmatrix}.$$
The corresponding sequences $\fb_\bm$ are 
$$\fb_\bm(\bla)=(11,10,9,8,8,7\nicefrac{1}{3},7,6,5,4,4,3\nicefrac{1}{3},2,2,1\nicefrac{1}{3},0\nicefrac{1}{3},0,0)$$  
and  $$ \fb_\bm(\bmu)=(11,10,9,8,7\nicefrac{1}{3},7,7,6,5,4\nicefrac{1}{3},4,3,3,2,1\nicefrac{1}{3},0\nicefrac{1}{3},0,0).$$

\medskip

Since $\bla\ll_\bm\bmu$ and $\bla\neq\bmu$, there exists a smallest integer $p$ such that $\fb_\bm^p(\bla)>\fb_\bm^p(\bmu)$.
Denote $\de=\fb_\bm^p(\bla)-\fb_\bm^p(\bmu)$.

In our example, $p=5$ and $\de=2/3$, since $\fb_\bm^i(\bla)=\fb_\bm^i(\bmu)\,\forall i<5$ and $\fb_\bm^{5}(\bla)=8$ and $\fb_\bm^{5}(\bmu)=7\nicefrac{1}{3}$.

\medskip

Now for all $i$, denote $\{m_i\}=m_i-\lfloor m_i \rfloor$ the fractional part of $m_i$, whenever $m_i\notin\Z$.
Set $\be_i=\min( \{m_i\}, 1- \{m_i\})$ (for all $m_i \notin\Z$), and $\be=\min_i \be_i$. If $m_i\in\Z$ for all $i$, then set $\be=1$. 
In particular $\be\leq\de$. In the example, $\be=1/3$.

Hence, set $0<\eps<\be$. 
Now, for all $k\in\llb 1, l\rrb$, consider the $\bm^{[\eps,k]}$-symbols of $\bla$ and $\bmu$.
In our example, for $k=3$ for instance, we get 
$$ \fB_{\bm^{[\eps,1]}}(\bla)=\begin{pmatrix}
                         0+\eps & 2+\eps & 4+\eps & 7+\eps & 8+\eps & 9+\eps & & \\
                         0 & 2 & 4 & 5 & 6 & 8 & 10 & 11 \\
                         0\nicefrac{1}{3} & 1\nicefrac{1}{3} & 3\nicefrac{1}{3} & 7\nicefrac{1}{3} & & &
                         \end{pmatrix}$$
and
$$\fB_{\bm^{[\eps,1]}}(\bmu)=\begin{pmatrix}
                        0+\eps & 2+\eps & 3+\eps & 7+\eps & 8+\eps & 10+\eps & & \\
                        0 & 3 & 4 & 5 & 6 & 7 & 9 & 11 \\
                        0\nicefrac{1}{3} & 1\nicefrac{1}{3} & 4\nicefrac{1}{3} & 7\nicefrac{1}{3} & & &
                        \end{pmatrix}.$$
 
%

Since $\eps<\be$, the "perturbed" elements (of $\fB_{\bm^{[\eps,k]}}$) are ordered in the same way as the original ones (those of $\fB_\bm$).
Precisely, for all $i$, we either have \begin{equation} \label{perturbation} \fb_{\bm^{[\eps,k]}}^i(\bla)=\left\{ \begin{array}{c}
                                                                             \fb_\bm^i(\bla) \text{\;\; or} \\
                                                                             \fb_\bm^i(\bla)+\eps,
                                                                               \end{array}\right. \end{equation}
and similarly for $\bmu$.

\medskip

Now, let $\al_{\bla,\bmu}=\min(\be,\de/p)$ and take $0<\eps<\al_{\bla,\bmu}$.
One can then compute $\sum_{i=1}^s \fb_{\bm^{[\eps,k]}}^i(\bla)$ and $\sum_{i=1}^s \fb_{\bm^{[\eps,k]}}^i(\bmu)$for all $s< p$.
Clearly, it is possible to have $\sum_{i=1}^s \fb_{\bm^{[\eps,k]}}^i(\bla) < \sum_{i=1}^s \fb_{\bm^{[\eps,k]}}^i(\bmu)$.
This is the case in the example, for $k=3$, since if we take $s=2$, 
we have $\fb_{\bm^{[\eps,1]}}^1(\bla)+\fb_{\bm^{[\eps,1]}}^2(\bla)=11+10<11+10+\eps=\fb_{\bm^{[\eps,1]}}^1(\bmu)+\fb_{\bm^{[\eps,1]}}^2(\bmu)$.
Hence, one can have $\bla\not\ll_{\bm^{[\eps,k]}}\bmu$.

However, we necessarily have:
\begin{itemize}
\item $\ds \sum_{i=1}^{p-1} \fb_{\bm^{[\eps,k]}}^i(\bmu) - \sum_{i=1}^{p-1} \fb_{\bm^{[\eps,k]}}^i(\bla) \leq (p-1)\eps$, and
\item $\fb_{\bm^{[\eps,k]}}^p(\bla) - \fb_{\bm^{[\eps,k]}}^p(\bmu) \geq \de-\eps$ since $\fb_\bm^p(\bla)- \fb_\bm^p(\bmu)=\de$.
\end{itemize}

Thus,  
$$\begin{aligned} \sum_{i=1}^p \fb_{\bm^{[\eps,k]}}^i(\bla) - 
\sum_{i=1}^p \fb_{\bm^{[\eps,k]}}^i(\bmu) & \geq -(p-1)\eps + \de - \eps \\
                                     & = -p\eps + \de \\
                                     & > -p\frac{\de}{p} + \de \text{\quad since \quad } \eps<\frac{\de}{p}\\
                                     & = 0. \end{aligned}$$

Hence one can never have $\bmu\ll_{\bm^{[\eps,k]}}\bla$, which proves the first point.

\medskip

\medskip

The proof of Assertion 2. is completely similar.
First, if $\bla$ and $\bmu$ are not comparable with respect to $\ll_\bm$, then there exist minimal integers $p_1$ and $p_2$ such that
$$\ds \sum_{i=1}^{p_1} \fb_\bm^i (\bla) > \sum_{i=1}^{p_1} \fb_\bm^i (\bmu) \mand \ds \sum_{i=1}^{p_2} \fb_\bm^i (\bla) < \sum_{i=1}^{p_2} \fb_\bm^i (\bmu).$$
We can assume without loss of generality that $p_1< p_2$.
We denote $\de_1= \fb_\bm^{p_1}(\bla) - \fb_\bm^{p_1}(\bmu)>0$ and $\de_2= \fb_\bm^{p_2}(\bmu) - \fb_\bm^{p_2}(\bla)>0$.
We take $\al_{\bla,\bmu}=\min\{ \be, \de_1/ p_1, \de_2 / p_2 \}$, where $\be$ is as in the proof of Assertion 1.
Again, because $\eps<\be$, we know that
$$\fb_{\bm^{[\eps,k]}}^i(\bla)=\left\{ \begin{array}{c}
                        \fb_\bm^i(\bla) \text{\;\; or} \\
                    \fb_\bm^i(\bla)+\eps,
                  \end{array}\right.$$

Now, on the one hand, we have
\begin{itemize}
\item $\ds \sum_{i=1}^{p_1-1} \fb_{\bm^{[\eps,k]}}^i(\bmu) - \sum_{i=1}^{p_1-1} \fb_{\bm^{[\eps,k]}}^i(\bla) \leq (p_1-1)\eps$, and
\item $\fb_{\bm^{[\eps,k]}}^{p_1}(\bla) - \fb_{\bm^{[\eps,k]}}^{p_1}(\bmu) \geq \de_1-\eps$ since $\fb_\bm^{p_1}(\bla)- \fb_\bm^{p_1}(\bmu)=\de_1$.
\end{itemize}

This gives
$$\begin{array}{rcl}\ds
\sum_{i=1}^{p_1} \fb_{\bm^{[\eps,k]}}^i(\bla) - 
\sum_{i=1}^{p_1} \fb_{\bm^{[\eps,k]}}^i(\bmu) & \geq & -(p_1-1)\eps + \de_1 - \eps  \\
                                       & = & -p_1\eps + \de_1   \\
                                     & > & -p_1\frac{\de_1}{p_1} + \de_1 \text{\quad since \quad } \eps<\frac{\de_1}{p_1} \\
                                     & = & 0.  
\end{array}$$

On the other hand, we have
\begin{itemize}
\item $\ds \sum_{i=1}^{p_2-1} \fb_{\bm^{[\eps,k]}}^i(\bla) - \sum_{i=1}^{p_2-1} \fb_{\bm^{[\eps,k]}}^i(\bmu) \leq (p_2-1)\eps$, and
\item $\fb_{\bm^{[\eps,k]}}^{p_2}(\bla) - \fb_{\bm^{[\eps,k]}}^{p_2}(\bmu) \leq -\de_2+\eps$ since $\fb_\bm^{p_2}(\bmu)- \fb_\bm^{p_2}(\bla)=\de_2$.
\end{itemize}

This gives
$$\begin{array}{rcl}\ds
\sum_{i=1}^{p_2} \fb_{\bm^{[\eps,k]}}^i(\bla) - 
\sum_{i=1}^{p_2} \fb_{\bm^{[\eps,k]}}^i(\bmu) & \leq & (p_2-1)\eps +  (-\de_2  + \eps)  \\
                                       & = & p_2\eps - \de_2   \\
                                     & < & p_2\frac{\de_2}{p_2} - \de_2 \text{\quad since \quad } \eps<\frac{\de_2}{p_2} \\
                                     & = & 0.  
\end{array}$$

This implies in particular that $\bla$ and $\bmu$ are not comparable with respect to the perturbed order $\ll_{\bm^{[\eps,k]}}$.

%

\endproof

The following corollary is then immediate.

\begin{cor} Under the assumptions of Lemma \ref{lemmsingular},
if $\bla \ll_{\bm^{[\eps,I]}} \bmu$, then $\bla \ll_\bm \bmu$.
\end{cor}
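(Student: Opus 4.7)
The plan is to obtain the corollary as an immediate consequence of Lemma \ref{lemmsingular} by contrapositive trichotomy. Since $\ll_\bm$ is a partial order, for any pair $\bla, \bmu \vdash_l n$ exactly one of the following holds: (i) $\bla = \bmu$; (ii) $\bla \ll_\bm \bmu$ with $\bla \neq \bmu$; (iii) $\bmu \ll_\bm \bla$ with $\bla \neq \bmu$; (iv) $\bla$ and $\bmu$ are incomparable with respect to $\ll_\bm$. I would assume $\bla \ll_{\bm^{[\eps,I]}} \bmu$ and rule out cases (iii) and (iv), leaving (i) or (ii), both of which yield $\bla \ll_\bm \bmu$.

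To rule out (iii), I apply Assertion 1 of Lemma \ref{lemmsingular} to the pair $(\bmu, \bla)$: if $\bmu \ll_\bm \bla$ and $\bmu \neq \bla$, then for $\eps$ less than the threshold $\al_{\bmu,\bla} = \al_{\bla,\bmu}$, either $\bmu \ll_{\bm^{[\eps,I]}} \bla$ or the two multipartitions are incomparable with respect to $\ll_{\bm^{[\eps,I]}}$. Both alternatives contradict the hypothesis $\bla \ll_{\bm^{[\eps,I]}} \bmu$ with $\bla \neq \bmu$ (the first by antisymmetry of $\ll_{\bm^{[\eps,I]}}$, the second directly). To rule out (iv), I apply Assertion 2 of Lemma \ref{lemmsingular}, which states that incomparability with respect to $\ll_\bm$ is preserved under the perturbation, again contradicting the hypothesis.

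There is no real obstacle here, since all the analytic content (controlling the symbol sums under perturbation by $\eps$) has been absorbed into the lemma. The only thing to be slightly careful about is the quantifier on $\eps$: the corollary is to be read as ``for the same $\eps \in ]0, \al_{\bla,\bmu}[$ produced by the lemma,'' so one simply reuses the threshold $\al_{\bla,\bmu}$ from Lemma \ref{lemmsingular}. With this in hand, the proof is literally a two-line case analysis.
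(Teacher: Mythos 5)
Your proof is correct and matches the paper's intent exactly: the paper states the corollary with only the remark ``then immediate,'' and your four-way case analysis, ruling out $\bmu \ll_\bm \bla$ via Assertion 1 applied to $(\bmu,\bla)$ and incomparability via Assertion 2, is precisely the argument that makes it immediate. The only thing worth flagging is the identification $\al_{\bmu,\bla} = \al_{\bla,\bmu}$, which is not literally guaranteed by the lemma's construction but is harmless, since one can always replace $\al_{\bla,\bmu}$ by $\min(\al_{\bla,\bmu},\al_{\bmu,\bla})$ without affecting anything.
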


\medskip

\begin{figure}
 \begin{center}
  \scalebox{0.5}{\input{hyp50b.pstex_t}}
  \caption{The perturbation $\bm^{[\eps,\{1,2\}]}$ of $\bm$ in level 2.}
  \label{hyp4}
 \end{center}
\end{figure}

\medskip

\textit{Proof of Proposition \ref{basicsetmasymptotic}.} 
Recall that we have fixed a weight sequence $\bm$ which is asymptotic and belongs to $\sP$.
Denote then $\cP_{i,j}(\bs)$, for $(i,j)\in J$, the hyperplanes containing $\bm$, where $\bs$ is the  $\bm$-adapted multicharge and is asymptotic.
Set also $I=\{i,j \; ; (i,j)\in J \}$.
Since $\bs$ is asymptotic, we have $s_i\neq s_j$ for all $i\neq j$.
Let $\pi$ be the reordering permutation of $\bs$, that is, the element of $\fS_l$ verifying $[\pi(i)<\pi(j) \Ra s_i>s_j ]$.


Denote also $\de=\ds\min_{\substack{(i,j)\notin J, j\in I \\ \bs'\in\cC_e}}d(\bm,\cP_{i,j}(\bs')$,
that is the minimal distance (in the usual sense) between $\bm$ and the set of hyperplanes $\cP_{i,j}(\bs')$ with $j\in I$ but $(i,j)\notin J$
(hence it is positive).

For $M\in\Irr(\Hkn)$, write $\sS(M)=\{\bmu\vdash_l n \, |\, d_{\bmu,M}\neq 0 \}$, as in Definition \ref{basicset}.
Set $\ds \al=\min_{\bmu\in\sS(M)} (\al_{\bla,\bmu})$, where the elements $\al_{\bla,\bmu}$ are defined in Lemma \ref{lemmsingular},
and take $0<\eps<\min(\al,e/l,\de/l)$.

Consider now the perturbed weight sequence $\bm^{[\eps,I]}=(m^{[\eps,I]}_1,\dots, m^{[\eps,I]}_l)$.
Because $\bm \in \cP_{i,j}(\bs)$ for all $(i,j)\in J$, we have
\begin{equation}\label{mhyp} m_{j}-m_{i} = s_{j}-s_{i}. \end{equation}
Hence, for all $(i,j)\in J$, we have 
$$\begin{array}{rcl}
m^{[\eps,I]}_{j}-m^{[\eps,I]}_{i} & = & (m_{j}+j\eps) - (m_{i}+ i\eps) \\ 
                                      & = & m_{j} - m_{i} + (j-i)\eps \\
				    & = & s_{j}-s_{i} + (j-i)\eps.
\end{array}$$
We have $-l<j-i<l$. Because we have chosen $\eps<e/l$, we have $(j-i)\eps<e$.
But for $\bs',\bs''\in\cC_e$, we have $s''_i=s'_i+N_ie$ for some $N_i\in\Z$ and for all $i\in\llb 1, l \rrb$.
This implies that the weight sequence $\bm^{[\eps,I]}$ no longer belongs to any hyperplane of the form $\cP_{i,j}(\bs')$, with $\bs'\in\cC_e$ and $i,j\in I$.

Also, since $m^{[\eps,I]}_i = m_i$ for all $i\notin I$ and because of (\ref{mhyp}),
we know that $\bm^{[\eps,I]}$ does not belong to any hyperplane of the form $\cP_{i,j}(\bs)$ for all $i,j\notin I$ and for all $\bs\in\cC_s$.

Finally, consider a pair $(i,j)$ with $i\notin I$ and $j\in I$, so that $m^{[\eps,I]}_i = m_i$ and $m^{[\eps,I]}_{j}=m_{j}+j\eps$.
Then we have
$$\begin{array}{rcl}
m^{[\eps,I]}_{j}-m^{[\eps,I]}_{i} & = & m_{j}+j\eps - m_{i}+ i\eps \\ 
                                      & = & m_{j} - m_{i} + j\eps \\
				    & = & s_{j}-s_{i} + j\eps.
\end{array}$$
We have $j\leq l$, and since $\eps<\de/l$, we have $j\eps<\de$.
Therefore, the new weight sequence does not $\bm^{[\eps,I]}$ belong to any hyperplane of the form $\cP_{i,j}(\bs')$ with $j\in I$, $(i,j)\notin J$ and 
$\bs'\in\cC_e$.

To sum up, we have just proved that $\bm^{[\eps,I]}\notin\sP$.

Therefore, by Proposition \ref{basicsetmregular}, $(\Hkn,\br)$ admits a canonical basic set with respect to $\ll_{\bm^{[\eps,I]}}$,
namely a set of twisted Uglov $l$-partitions, which is equal to the set $\cK_{\cC_e}^\pi (n)$ (see Remark \ref{remarkugsi=ug} for instance),
where $\pi$ is the reordering permutation of $\bs$.
Denote $\sB=\cK_{\cC_e}^\pi (n)$.

Since $\sB$ is the canonical basic set with respect to $\ll_{\bm^{[\eps,I]}}$, there exists a unique $\bla\in\sB$ verifying:
\begin{equation}\label{minimalelement}
\begin{array}{c} \bla\ll_{\bm^{[\eps,I]}} \bmu \quad \forall \bmu\in\sS(M).
\end{array}
 \end{equation}

\medskip

Suppose that $\sB$ is not the canonical basic set for $(\Hkn,\br)$ with respect to $\ll_\bm$.
Then either: 
\begin{enumerate}
 \item there exists $\bmu\in\sS(M)$ such that $\bmu\ll_\bm \bla$. Because $\eps<\al$, Lemma \ref{lemmsingular} applies.
In particular, Assertion 1. ensures that one can never find $I$ such that $\bla \ll_{\bm^{[\eps,I]}} \bmu$,
which contradicts (\ref{minimalelement}).
 \item there exists $\bla' \neq \bla$ such that $\bla'$ is also minimal in $\sS(M)$ with respect to $\ll_\bm$.
In this case, by Point 2. of Lemma \ref{lemmsingular}, $\bla$ and $\bla'$ are not comparable with respect to $\ll_{\bm^{[\eps,I]}}$. 
Therefore, $\bla$ and $\bla'$ are both minimal with respect to $\ll_{\bm^{[\eps,I]}}$, which contradicts (\ref{minimalelement}).
\end{enumerate}

\begin{flushright}
 $\square$
\end{flushright}

\section{Canonical basic sets for singular $\bm$} \label{msingular}

Denote by $\sP^*$ the set of all $\bm$ in $\sP$ such that $\bm$ is not asymptotic. If $\bm\in\sP^*$, we say that $\bm$ is \textit{singular}. 

In the previous section, we have considered perturbations $\bm^{[\eps,I]}$ of $\bm$.
In this section we will need more general perturbations.
In fact, for $\rho\in\fS_l$, $I\subset\llb, 1 , l \rrb$ and $\eps>0$, 
we define the weight sequence $\bm^{[\eps,I,\rho]}=(m^{[\eps,I,\rho]}_1,\dots,m^{[\eps,I,\rho]}_l)$ by:
$$m^{[\eps,I,\rho]}_i= \left\{ \begin{array}{l} m_i \text{ if } i\notin I \\ m_i+\rho(i)\eps \text{ if } i\in I\end{array}\right. $$

\begin{rem}
Note that, in particular, $\bm^{[\eps,I,\Id]}=\bm^{[\eps,I]}$.
\end{rem}

In this section, since $\bm$ is singular, the $\bm$-adapted multicharge $\bs$ is non-asymptotic and $\bm$ belongs to $\bigcup_{(i,j)\in J}\cP_{i,j}(\bs)$
for some $J$.
Recall that if we set $I=\{i,j\, ;(i,j)\in J\}$ as in Section \ref{klmpascbs}, we have, for all $(i,j)\in J$
\begin{equation}\label{id}
m^{[\eps,I,\Id]}_j-m^{[\eps,I,\Id]}_i = s_j-s_i + (j-i)\eps
\end{equation}

One can now consider the perturbations $\bm^{[\eps,I,(ij)]}$, for $(i,j)\in J$ (that is, associated to the transposition $(ij)$.
They verify 
\begin{equation}\label{transp}
m^{[\eps,I,(ij)]}_j-m^{[\eps,I,(ij)]}_i = s_j - s_i + (i-j)\eps
\end{equation}

Looking at (\ref{id}) and (\ref{transp}), we see that $\bm^{[\eps,I,\Id]}$ and $\bm^{[\eps,I,(ij)]}$ are on opposite sides of $\cP_{i,j}(\bs)$.
We are now ready to prove the following proposition.

\medskip

\begin{prop}\label{basicsetmsingular}
Let $\bm$ be a singular weight sequence. Then $(\Hkn,\br)$ does not admit any canonical basic set with respect to $\ll_\bm$.
\end{prop}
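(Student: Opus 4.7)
I would argue by contradiction, combining a rigidity property of canonical basic sets under small perturbations with a dichotomy coming from the non-asymptotic hypothesis. Assume $\sB = \{\bla_M \,;\, M \in \Irr(\Hkn)\}$ is a canonical basic set for $(\Hkn, \br)$ with respect to $\ll_\bm$, where $\bla_M$ is the unique minimum of $\sS(M)$.

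\emph{Step 1 (rigidity under regular perturbation).} Pick any regular perturbation $\tbm$ of $\bm$ of the form $\bm^{[\eps, I, \rho]}$ with $\eps$ small and $\rho \in \fS_l$; regularity can be arranged exactly as in the proof of Proposition \ref{basicsetmasymptotic}. By Proposition \ref{basicsetmregular}, $(\Hkn, \br)$ admits a canonical basic set $\tilde{\sB}$ with respect to $\ll_{\tbm}$, with unique minima $\tilde{\bla}_M \in \sS(M)$. The statement and proof of Lemma \ref{lemmsingular} extend mutatis mutandis to directional perturbations $\bm^{[\eps, I, \rho]}$ (only smallness of $\eps$ is used), so for every $\bmu \in \sS(M) \setminus \{\bla_M\}$ the strict relation $\bla_M \ll_\bm \bmu$ cannot reverse to $\bmu \ll_{\tbm} \bla_M$. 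Hence $\bla_M$ is still a minimum of $\sS(M)$ with respect to $\ll_{\tbm}$; by uniqueness $\tilde{\bla}_M = \bla_M$, so $\tilde{\sB} = \sB$.

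\emph{Step 2 (two perturbations with distinct basic sets).} Let $\bs$ be the $\bm$-adapted multicharge, which by definition of $\sP^*$ is non-asymptotic, and let $J$ be the set of pairs $(i,j)$ with $\bm \in \cP_{i,j}(\bs)$, $I = \{i,j \,;\, (i,j) \in J\}$. By the formulas (\ref{id}) and (\ref{transp}) set up at the start of the section, for any $(i,j) \in J$ the perturbations $\bm^{[\eps, I, \Id]}$ and $\bm^{[\eps, I, (ij)]}$ sit on opposite sides of $\cP_{i,j}(\bs)$ and, for $\eps$ small enough, are regular. Proposition \ref{basicsetmregular} then gives their canonical basic sets as twisted Uglov sets $\si_1(\Phi_{\bs^{\si_1^{-1}}}(n))$ and $\si_2(\Phi_{\bs^{\si_2^{-1}}}(n))$, where $\si_1$ and $\si_2$ differ precisely by the transposition exchanging $i$ and $j$. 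By Remark \ref{remarkdefasymptotic}, non-asymptoticness of $\bs$ means that the family $\{\si(\Phi_{\bs^{\si^{-1}}}(n))\,;\, \si \in \fS_l\}$ is not constant. Writing a discriminating permutation as a product of transpositions and walking along the corresponding chain $\Id = \tau_0, \tau_1, \dots, \tau_N$ of adapted permutations (each step realized by crossing one of the hyperplanes in $J$), I can locate two consecutive $\tau_k, \tau_{k+1}$ whose twisted Uglov sets differ. These provide the required pair $(i,j) \in J$ and the two perturbations with distinct canonical basic sets $\sB_1 \neq \sB_2$.

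\emph{Conclusion and main obstacle.} Step 1 forces $\sB = \sB_1$ and $\sB = \sB_2$, contradicting Step 2, so no canonical basic set with respect to $\ll_\bm$ can exist. The main obstacle is Step 2: non-asymptoticness is a global statement over all of $\fS_l$, whereas the chamber structure around $\bm$ only permits transpositions drawn from $J$. The delicate point is to guarantee that the walk from $\Id$ to a discriminating permutation can be performed using only transpositions supported in $J$, i.e.\ that the geometry of the hyperplanes $\bm$ sits on is rich enough to detect the non-asymptoticness of $\bs$; the key input there is that $\bs$ being non-asymptotic already fails the asymptotic inequalities of Proposition \ref{ug=klsi} for some pair of indices, which will necessarily lie in $J$ once $\bm$ is taken on the associated hyperplane.
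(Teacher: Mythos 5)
Your proposal follows the same route as the paper's proof: argue by contradiction, show (via Lemma \ref{lemmsingular}, applied to perturbations $\bm^{[\eps,I,\rho]}$) that any canonical basic set for $\ll_\bm$ would have to coincide with the canonical basic set of every small regular perturbation, and then observe that two such perturbations give different basic sets. Your Step 1 is exactly the paper's two-bullet contradiction argument at the end of the proof, recast as a rigidity lemma; your Step 2 is exactly the paper's display (\ref{differentbs}), obtained from Proposition \ref{basicsetmregular} applied to the perturbations $\bm^{[\eps,I,\Id]}$ and $\bm^{[\eps,I,(ij)]}$ which straddle a hyperplane in $J$.

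Where you go wrong is in the mechanism you offer to close Step 2. You assert that because $\bs$ is non-asymptotic, the pairs $(i,j)$ for which the bound of Proposition \ref{ug=klsi} fails must lie in $J$. That is not true, and the reasoning does not even make sense as stated: the set $J$ records which hyperplanes $\cP_{i,j}(\bs)$ the weight sequence $\bm$ happens to sit on (equivalently, the coincidences $s_i-m_i=s_j-m_j$), whereas the failure of the inequalities $s_i-s_j\geq n+1$ depends on $\bs$ alone. These are independent conditions on two different objects ($\bm$ versus $\bs$), and nothing forces the indices that witness one to appear among the indices that witness the other. The paper does not argue along those lines at all; it invokes Remark \ref{remarkdefasymptotic} directly, together with the observation following (\ref{id}) and (\ref{transp}) that $\bm^{[\eps,I,\Id]}$ and $\bm^{[\eps,I,(ij)]}$ lie on opposite sides of $\cP_{i,j}(\bs)$, to assert that the corresponding twisted Uglov sets differ for some $(i,j)\in J$. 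Your concern that ``the chamber structure around $\bm$ only permits transpositions drawn from $J$'' is legitimate and the paper's own justification here is terse, but the resolution you sketch is not the right one; what is actually needed is the statement that non-asymptoticness of $\bs$ forces two of the twisted Uglov sets attached to adjacent chambers around $\bm$ to differ, and Proposition \ref{ug=klsi} plays no role in producing that pair.
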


\proof Suppose that there exists a canonical basic set $\sB$ for $(\Hkn,\br)$ with respect to $\ll_\bm$.

For $M\in\Irr(\Hkn)$, recall that we have denoted $\sS(M)=\{ \bmu\vdash_l n \,|\, d_{\bmu,M}\neq 0\}$.
By definition, there exists a unique element $\bla_M \in \sS(M)$ such that $\bla_M \ll_\bm \bmu$ for all $\bmu\in\sS(M)$.

We follow the same notation as in the proof of Proposition \ref{basicsetmasymptotic},
and take $0<\eps<\min(\al,\be)$.
Then, for the same reason as in that proof, $\bm^{[\eps,I,\rho]}$ is regular for all $\rho\in\fS_l$.
Hence by Proposition \ref{basicsetmregular},
there exists a canonical basic set $\sB^{[\rho]}$ for $(\Hkn,\br)$ with respect to $\ll_{\bm^{[\eps,I,\rho]}}$,
namely the set of some twisted Uglov $l$-partitions.
Since $\bs$ is not asymptotic, Remark \ref{remarkdefasymptotic} implies that there exists $\rho_1$ and $\rho_2$ such that
 \begin{equation}\label{differentbs}
 \sB^{[\rho_1]} \neq \sB^{[\rho_2]}.
 \end{equation}
Note that this is true for $\rho_1=\Id$ and $\rho_2=(ij)$ for some $(i,j)\in J$ because of the remark following (\ref{id}) and (\ref{transp}).
Since $\sB^{[\rho_1]}$ is the canonical basic set with respect to $\ll_{\bm^{[\eps, \rho_1]}}$,
there exists a unique element $\bla^{[1]}_M$ such that for all $\bmu\in\sS(M)$, $\bla^{[1]}_M \ll_{\bm^{[\eps,\rho_1]}} \bmu$.
Similarly, there exists a unique element $\bla^{[2]}_M$  such that for all $\bmu\in\sS(M)$, $\bla^{[2]}_M \ll_{\bm^{[\eps,\rho_2]}} \bmu$.

Now by (\ref{differentbs}), there exists a particular $M_0\in\Irr(\Hkn)$ such that 
\begin{equation} \label{differentmin} \bla^{[1]}_{M_0} \neq \bla^{[2]}_{M_0}.\end{equation}

Thus, we have:
\begin{itemize}
 \item $\bla^{[1]}_{M_0} \ll_{\bm^{[\eps,\rho_1]}} \bla_{M_0}$ and $ \bla_{M_0} \ll_{\bm} \bla^{[1]}_{M_0} $. 
But by Lemma \ref{lemmsingular} (which applies since $\eps<\al$), 
this not possible if $\bla_{M_0} \neq \bla^{[1]}_{M_0}$.
Hence $\bla_{M_0}=\bla^{[1]}_{M_0}$.
 \item $\bla^{[2]}_{M_0} \ll_{\bm^{[\eps,\rho_2]}} \bla_{M_0}$ and $ \bla_{M_0} \ll_{\bm} \bla^{[2]}_{M_0} $. 
Again, by Lemma \ref{lemmsingular}, this not possible if $\bla_{M_0} \neq \bla^{[2]}_{M_0}$.
Hence $\bla_{M_0}=\bla^{[2]}_{M_0}$.
\end{itemize}

Hence, $\bla^{[1]}_{M_0}=\bla^{[2]}_{M_0}$, which contradicts (\ref{differentmin}).
\endproof

\medskip

As previously mentioned in Remark \ref{importantremark}, a singular weight sequence $\bm$ can however yield a canonical basic set for $(\Hkn,\bs)$,
but with $\bs\in\cC\backslash\cC_e$ (i.e. with a different parametrisation of the rows of $D$). This is illustrated in the following example.
 
\begin{exa}\label{counterexample0}
Let $l=2$, $e=3$, $n\geq 4$, $\br=(1,0)$. In particular $\br$ is not asymptotic, which one can check directly by
computing $\Phi_\br(n)$ and $\cK_{\cC_e}(n)$.
Take $\bs=\br^{\si}=(0,1)$ (where $\si=(12)$), and $m=(0,-1)$.
Then $\bm\in\cP_{1,2}(\br)$ since $\br-\bm=(1,1)$, and by Proposition \ref{basicsetmsingular}, $(\Hkn,\br)$ 
does not admit any canonical basic set with respect to $\ll_\bm$.
However, $\bs-\bm=(0,2)$, so that $\bm\in\sD_\bs$. By Proposition \ref{basicsetmregularlevel2}, $(\Hkn,\bs)$ admits a canonical basic set with respect to $\ll_\bm$,
namely the set $\ug(n)$.
\end{exa}

\medskip

\begin{rem}
In the particular case where $l=2$ and $e=\infty$ (i.e. when $\ze$ is not a root of unity, cf. Section \ref{akalgebras}), one can use a simpler argument
to show that there is no canonical basic set.
First, note that in this case, $\cC_e=\{\br\}$, and $\sP$ consists of just the line passing through $\br$ with slope one. Also, $\sP^*=\sP$.
There is a "natural" symbol  which encodes the weight of a multipartition $\bla$ seen as an element of the Fock space $\cF_\br$.
Because $\bm$ is singular, this information is precisely the data carried by the shifted $\bm$-symbol of $\bla$.
Now since $e=\infty$, one can then show that 
the decomposition numbers $d_{\bmu,\bla}$ are non-zero only if $\bmu$ and $\bla$ are not comparable with respect to $\ll_\bm$,
which proves that there cannot be a basic set with respect to $\ll_\bm$.

Note also that explicit formulas are known for computing the elements of the canonical basis of the module $V(\br)$ in this case, see \cite{LeclercMiyachi2007},
which directly shows that all the elements appearing in the decomposition of $G_{\infty}(\bla,\br)$ have the same symbol up to a permutation of their elements.
\end{rem}

\medskip

\medskip

Putting together Propositions \ref{basicsetmregular}, \ref{basicsetmasymptotic} and \ref{basicsetmsingular}, we have proved:

\begin{thm}\label{theorem} Given a multicharge $\br\in\Z^l$ and a weight sequence $\bm\in\Q^l$, we have the following exhaustive classification:

\begin{itemize}
 \item If $\bm$ is regular, then $(\Hkn,\br)$ admits a canonical basic set with respect to $\ll_\bm$, namely the set of
$\si$-twisted Uglov $l$-partitions $\si(\Phi_{\bs^{\si^{-1}}}(n))$ where $\si$ is the $\bm$-adapted permutation and $\bs$ is the $\bm$-adapted multicharge
(cf. Proposition \ref{basicsetmregular}).
 \item If $\bm$ is asymptotic, then $(\Hkn, \br)$ admits a canonical basic set with respect to $\ll_\bm$, namely the set of
$\pi$-twisted Kleshchev $l$-partitions $\cK_{\cC_e}^\pi(n)$, where $\pi$ is the reordering permutation of the $\bm$-adapted multicharge 
(cf. Corollary \ref{corstabilisationklsi}).
 \item If $\bm$ is singular, then $(\Hkn,\br)$ does not admit any canonical basic set with respect to $\ll_\bm$.
\end{itemize}

\end{thm}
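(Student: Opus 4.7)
The plan is to observe that Theorem \ref{theorem} is essentially an assembly result: its three cases correspond exactly to Propositions \ref{basicsetmregular}, \ref{basicsetmasymptotic} and \ref{basicsetmsingular}, which have been established in the three preceding sections. Hence the real task is to verify two structural things: first, that the classification into regular/asymptotic/singular exhausts all of $\Q^l$; second, that where the three cases potentially overlap, the descriptions of the canonical basic set are consistent.

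For exhaustiveness, I would argue as follows. Let $\bm\in\Q^l$ be arbitrary. Either $\bm\notin\sP$, in which case $\bm$ is regular and Proposition \ref{basicsetmregular} yields the canonical basic set $\si(\Phi_{\bs^{\si^{-1}}}(n))$ with $\bs$ the $\bm$-adapted multicharge and $\si$ the $\bm$-adapted permutation; or $\bm\in\sP$. In the latter case, by definition either the $\bm$-adapted multicharge $\bs$ is asymptotic, so Proposition \ref{basicsetmasymptotic} applies and gives $\cK_{\cC_e}^\pi(n)$ (with $\pi$ the reordering permutation of $\bs$), or $\bs$ is not asymptotic, which means precisely $\bm\in\sP^*$, i.e.\ $\bm$ is singular, and then Proposition \ref{basicsetmsingular} states that no canonical basic set exists with respect to $\ll_\bm$. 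Thus every $\bm$ is handled.

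The one subtlety is that the "regular" and "asymptotic" branches are not mutually exclusive: a regular $\bm$ may have an asymptotic adapted multicharge, in which case both Proposition \ref{basicsetmregular} and Proposition \ref{basicsetmasymptotic} give a canonical basic set. Since canonical basic sets are unique (as noted in the remark following Definition \ref{basicset}), the descriptions must coincide. This is exactly the content of Remark \ref{remarkugsi=ug}: whenever $\bs$ is asymptotic with reordering permutation $\pi$, one has $\si(\Phi_{\bs^{\si^{-1}}}(n))=\cK_{\cC_e}^\pi(n)$ for every $\si\in\fS_l$, so the two statements of Theorem \ref{theorem} give the same subset of $\Pi_l(n)$. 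Moreover, in the singular case, the non-existence result is compatible with the other two cases vacuously, since $\bm\in\sP^*$ excludes both $\bm\notin\sP$ and the asymptotic condition on $\bs$.

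There is essentially no hard step left at this stage: all the genuine work lies in the three cited propositions, whose proofs rely on Geck--Jacon's Theorem \ref{bs}, the stabilisation results of Propositions \ref{ug=kl} and \ref{ug=klsi}, and the continuity lemma \ref{lemmsingular} for the order $\ll_\bm$ under small perturbations of $\bm$. What remains for the proof of Theorem \ref{theorem} itself is the verification above that the three cases exhaust $\Q^l$ and that overlapping cases produce the same basic set, which is a direct bookkeeping exercise once the adapted multicharge, adapted permutation, and reordering permutation have been defined (Definitions \ref{defadaptedmc}, Proposition \ref{basicsetmregular}, Corollary \ref{corstabilisationklsi}).
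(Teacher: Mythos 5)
Your proposal matches the paper's proof: the paper simply states "Putting together Propositions \ref{basicsetmregular}, \ref{basicsetmasymptotic} and \ref{basicsetmsingular}, we have proved" the theorem, and then records exactly the overlap observation you make (regular $\cap$ asymptotic is nonempty and the two descriptions agree by Remark \ref{remarkugsi=ug}) in the remark that immediately follows. You spell out the exhaustiveness trichotomy $\bm\notin\sP$ / ($\bm\in\sP$ asymptotic) / $\bm\in\sP^*$ slightly more explicitly than the paper, but the argument and its ingredients are the same.
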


\begin{rem}
 Note that a weight sequence $\bm$ can be simultaneously regular and asymptotic. 
 In this case, one must have $\si(\Phi_{\bs^{\si^{-1}}}(n))=\cK_{\cC_e}^\pi(n)$, which is precisely what is stated in Remark \ref{remarkugsi=ug}.
\end{rem}

\medskip

Now that we have fully understood which values of $\bm$ yield a canonical basic set for $(\Hkn,\br)$ with respect to $\ll_\bm$,
we can state a similar result for the order induced by the $\ba$-function.
Indeed, by the compatibility property (\ref{compatibility}), if $\sB$ is the canonical basic set with respect to $\ll_\bm$, 
it is also the canonical basic set with respect to the $\ba$-function. 
Hence, the first two assertions in Theorem \ref{theorem} still hold for this order.
Further, on can prove using similar arguments that all of the results in the singular case also hold for this order. This leads to:

\begin{thm}\label{theorem0} $\ $ \vspace{0.0001cm}

\begin{itemize}
 \item If $\bm$ is regular, then $(\Hkn,\br)$ admits a canonical basic set with respect to the $\ba$-function, namely the set of
$\si$-twisted Uglov $l$-partitions $\si(\Phi_{\bs^{\si^{-1}}}(n))$ where $\si$ is the $\bm$-adapted permutation and $\bs$ is the $\bm$-adapted multicharge.
 \item If $\bm$ is asymptotic, then $(\Hkn, \br)$ admits a canonical basic set with respect to the $\ba$-function, namely the set of
$\pi$-twisted Kleshchev $l$-partitions  $\cK_{\cC_e}^\pi(n)$, where $\pi$ is the reordering permutation of the $\bm$-adapted multicharge.
 \item If $\bm$ is singular, then $(\Hkn,\br)$ does not admit any canonical basic set with respect to the $\ba$-function.
\end{itemize}

\end{thm}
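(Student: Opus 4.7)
The plan is to deduce Theorem \ref{theorem0} from Theorem \ref{theorem} via two ingredients: the compatibility (\ref{compatibility}), which automatically transfers canonical basic sets from $\ll_\bm$ to the $\ba$-order, and a continuity property of $\bm \mapsto \ba^{\bm}(\bla)$ playing the role of Lemma \ref{lemmsingular} in the singular case.

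For the first two bullet points (regular and asymptotic), I would let $\sB$ be the canonical basic set for $(\Hkn,\br)$ with respect to $\ll_\bm$ furnished by Theorem \ref{theorem}. By definition, each $M \in \Irr(\Hkn)$ has a unique associated $\bla_M \in \sB \cap \sS(M)$ satisfying $\bla_M \ll_\bm \bmu$ for all $\bmu \in \sS(M)$, together with $d_{\bla_M, M} = 1$. By (\ref{compatibility}), any $\bmu \in \sS(M) \setminus \{\bla_M\}$ then satisfies $\ba^{\bm}(\bla_M) < \ba^{\bm}(\bmu)$, so the three conditions of Definition \ref{basicset} for the $\ba$-order are met by the same set $\sB$ with the same elements $\bla_M$. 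This yields the first two bullet points, with exactly the sets predicted by Theorem \ref{theorem}.

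For the third bullet point, the key step I would establish first is a continuity lemma playing the role of Lemma \ref{lemmsingular}: for any $\bla, \bmu \vdash_l n$ with $\bla \neq \bmu$, there exists $\al'_{\bla,\bmu} > 0$ such that for every $\eps \in (0, \al'_{\bla,\bmu})$, every $I \subseteq \llb 1, l \rrb$ and every $\rho \in \fS_l$,
\begin{equation*}
\ba^{\bm}(\bla) < \ba^{\bm}(\bmu) \ \Longrightarrow\ \ba^{\bm^{[\eps,I,\rho]}}(\bla) < \ba^{\bm^{[\eps,I,\rho]}}(\bmu).
\end{equation*}
This reduces to the observation that $\bm \mapsto \ba^{\bm}(\bla)$ is continuous, indeed piecewise linear: via the symbol formula $\ba^{\bm}(\bla) = t(n_\bm(\bla) - n_\bm(\bemptyset))$ of \cite[Proposition 5.5.11]{GeckJacon2011}, $n_\bm(\bla)$ is the sum of the sorted entries of the shifted $\bm$-symbol weighted by their positions, and each entry is affine in $\bm$; equivalently, the original definition as $-\deg(\bc^{\bla})$ presents $\ba^{\bm}(\bla)$ as the negative of a min of finitely many linear functions of $\bm$. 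A strict inequality between two such continuous functions is preserved on a small neighbourhood, which supplies the desired $\al'_{\bla,\bmu}$.

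With this in hand, the proof of Proposition \ref{basicsetmsingular} carries over almost verbatim. Assuming for contradiction that $(\Hkn, \br)$ admits a canonical basic set $\sB$ with respect to the $\ba$-function, I would choose $(i,j) \in J$ with $\bm \in \cP_{i,j}(\bs)$ (where $\bs$ is the $\bm$-adapted multicharge) and set $\rho_1 = \Id$, $\rho_2 = (i\,j)$. For $\eps > 0$ small enough, the weight sequences $\bm^{[\eps,I,\rho_1]}$ and $\bm^{[\eps,I,\rho_2]}$ are regular and lie on opposite sides of $\cP_{i,j}(\bs)$; by the first bullet point, they afford canonical basic sets $\sB^{[\rho_1]}, \sB^{[\rho_2]}$ for their respective $\ba$-orders. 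Because $\bs$ is not asymptotic, Remark \ref{remarkdefasymptotic} ensures $\sB^{[\rho_1]} \neq \sB^{[\rho_2]}$, producing some $M_0$ with $\bla^{[1]}_{M_0} \neq \bla^{[2]}_{M_0}$; the continuity lemma then forces $\bla_{M_0} = \bla^{[1]}_{M_0}$ and $\bla_{M_0} = \bla^{[2]}_{M_0}$, a contradiction. The main obstacle is precisely the continuity lemma; once it is in place, the third bullet point is a formal replay of the arguments of Sections \ref{mregular}, \ref{masymptotic} and \ref{msingular}.
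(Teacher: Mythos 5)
Your proposal is correct and follows the same route as the paper: the first two bullet points come directly from the compatibility property~(\ref{compatibility}), and the third is handled by replaying Proposition~\ref{basicsetmsingular} once a continuity statement for $\ba^{\bm}$ analogous to Lemma~\ref{lemmsingular} is in place. The paper merely asserts that the two orders "enjoy the same continuity property" and concludes with "similar arguments"; your explicit observation that $\bm\mapsto\ba^{\bm}(\bla)$ is piecewise linear (a max of finitely many affine functions), so strict inequalities persist under small perturbation, is precisely the content being appealed to, and your remark that this one-sided preservation of strict inequality suffices for the contradiction in the singular case is accurate.
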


\bigskip

\textbf{Acknowledgments:} I would like to thank C\'edric Lecouvey and Nicolas Jacon for their always appreciated comments and guidance, 
as well as J\'er\'emie Guilhot for some enlightening discussions.

\bigskip

\bibliographystyle{plain}
\bibliography{biblio}

\end{document}